\documentclass[reqno,11pt]{amsart}
\usepackage[left=1in,right=1in,top=1in,bottom=1in]{geometry}
\usepackage{amsmath}
\usepackage{amssymb}
\usepackage{xcolor}
\usepackage{hyperref}
\hypersetup{
   colorlinks=true,
   citecolor=blue,
   filecolor=blue,
   linkcolor=blue,
   urlcolor=blue
}

\newcommand{\al}{\alpha}

\newcommand{\ud}{\mathrm{d}}
\newcommand{\A}{\mathbf{A}}
\newcommand{\ol}{\overline}
\newcommand{\I}{\mathbf{I}}
\newcommand{\Z}{\mathbb{Z}}
\newcommand{\M}{\mathcal{M}}
\newcommand{\cN}{\mathcal{N}}
\newcommand{\Nb}{\mathcal{N}_{\mathrm b}}
\newcommand{\PP}{\mathbf{P}}
\newcommand{\Q}{\mathbf{Q}}
\newcommand{\R}{\mathbb{R}}
\newcommand{\RR}{\mathbf{R}}
\newcommand{\fS}{\mathbf{S}}
\newcommand{\T}{\mathrm{T}}
\newcommand{\f}{\frac}
\DeclareMathOperator{\dist}{dist}
\DeclareMathOperator{\tr}{tr}
\DeclareMathOperator{\diag}{diag}
\DeclareMathOperator{\rank}{rank}
\DeclareMathOperator{\vol}{vol}

\def\<{\langle}\def\>{\rangle}
\numberwithin{equation}{section}
\theoremstyle{plain}
\newtheorem{thm}{Theorem}[section]

\newtheorem{lem}[thm]{Lemma}
\newtheorem{prop}[thm]{Proposition}

\theoremstyle{definition}
\newtheorem{defn}[thm]{Definition}

\theoremstyle{remark}
\newtheorem{rem}[thm]{Remark}

\title[No point defects in biaxial Landau--de Gennes models]{On the absence of point defects in biaxial Landau--de Gennes models}

\author{Haotong Fu}
\address{School of Mathematical Sciences, Peking University, Beijing 100871, China}
\email{2301110012@pku.edu.cn}

\author{Huaijie Wang}
\address{School of Mathematical Sciences, Peking University, Beijing 100871, China}
\email{huaijie\_wang@163.com}

\author{Wei Wang}
\address{School of Mathematical Sciences, Peking University, Beijing 100871, China}
\email{wwmath166@outlook.com,\,\,2201110024@stu.pku.edu.cn}

\author{Zhifei Zhang}
\address{School of Mathematical Sciences, Peking University, Beijing 100871, China}
\email{zfzhang@math.pku.edu.cn}

\newcommand{\IF}{\mathcal I}
\newcommand{\Spts}{\mathcal S_{\mathrm{pts}}}
\DeclareMathOperator{\Area}{Area}

\begin{document}

\begin{abstract}
We study local minimizers of a sextic-potential Landau--de Gennes energy for nematic liquid crystals in the small-elastic-constant limit. Under the uniform energy and $L^\infty$ bounds, these minimizers converge to a locally energy-minimizing harmonic map $\mathbf{Q}_0$ into a biaxial vacuum manifold. The main result of this paper is that such a limiting map $\mathbf{Q}_0$  has no interior point singularities. The proof relies on a geometric identification of the lifted Frobenius metric on the universal cover $\mathbb{S}^3$ with a rescaled Berger metric. For a hypothetical tangent cone at a point singularity, its link is a nonconstant harmonic two-sphere into the Berger sphere. We construct a smooth variation field adapted to the Hopf direction and show that it induces a quantitative instability estimate, in contradiction with the shifted stability inequality inherited from local minimality. This replaces the usual round-sphere test fields by a target-specific construction and rules out interior point defects in the biaxial setting. The result is sharp in view of the well-known existence of point defects in the uniaxial theory.

\end{abstract}

\maketitle

\section{Setting and statement}\label{sec:setting-statement}

\subsection{Introduction}

Nematic liquid crystals are anisotropic fluids whose local orientational order
is described in the Landau--de Gennes theory by a symmetric traceless
$\Q$-tensor. Let $\mathbb{S}_0$ be the Euclidean space of traceless symmetric
$3\times3$ matrices with Frobenius inner product
\[
\langle \A,\mathbf B\rangle_F:=\tr(\A\mathbf B),
 \quad|\A|_F^2:=\tr(\A^2).
\]
The Landau--de Gennes energy consists of an elastic term and a bulk term.  The
elastic energy measures spatial distortions of $\Q$, whereas the bulk potential
selects the energetically preferred phase. For a domain $U\subset\R^3$ and a parameter $\varepsilon\in(0,1)$, the Landau--de Gennes energy with the sextic bulk potential studied by Wang--Zhang
\cite{WZ24} is as follows
\begin{equation}
E_\varepsilon(\Q;U):=\int_U\left(\frac12|\nabla\Q|_F^2+\frac1{\varepsilon^2}f_{\mathrm{bulk}}(\Q)\right)\ud x,\label{LdGenergy}
\end{equation}
where the bulk density is
\begin{equation}
f_{\mathrm{bulk}}(\Q):=a_1-\frac{a_2}{2}\tr(\Q^2)+\frac{a_4}{4}\bigl(\tr(\Q^2)\bigr)^2+\frac{a_6}{6}\bigl(\tr(\Q^2)\bigr)^3+\frac{a'_6}{6}\bigl(\tr(\Q^3)\bigr)^2,\label{sexticbulk}
\end{equation}
and $a_2,a_4,a_6,a'_6>0$, while $a_1$ is chosen so that
\[
\min_{\Q\in\mathbb S_0}f_{\mathrm{bulk}}(\Q)=0.
\]
The two terms in $E_\varepsilon$ are, respectively, elastic and bulk
contributions.  We are concerned with the small-elastic-constant limit
$\varepsilon\to0^+$, in which the bulk term forces $\Q$ toward the zero set of
$f_{\mathrm{bulk}}$.  In the standard quartic
model, this zero set is a uniaxial vacuum manifold.  For a material constant
$s_0>0$, it is given by
\begin{equation}
\mathcal N_{\mathrm u}:=\left\{s_0\left(\mathbf n\otimes\mathbf n-\frac13\I_3\right):\mathbf n\in\mathbb S^2\right\}
 \cong\mathbb S^2/\{\mathbf n\sim-\mathbf n\}\cong\mathbb R\mathbb P^2.\label{UniaxialVacuumMfd}
\end{equation}
Here and in the following, $\cong$ denotes a diffeomorphism between smooth manifolds. When metrics are displayed, they denote an isometry.

The small-elastic-constant limit in the uniaxial setting has been studied
extensively. Majumdar--Zarnescu \cite{MZ10} analyzed the Oseen--Frank limit,
in which point defects arise, while Canevari \cite{Can17} studied
three-dimensional line defects.  Related results include
\cite{Can15,FH22,FWW25,FWW26,NZ13}. The vacuum structure may change for a
general sixth-order bulk density. Sextic potentials can instead produce a
biaxial vacuum manifold \cite{AL08}, and experimental evidence for biaxial
nematic phases has also been reported \cite{SS04}.  A biaxial $\Q$-tensor has
three distinct eigenvalues and therefore determines an orthogonal eigenframe,
up to the natural sign symmetries, rather than the single unoriented director
associated with \eqref{UniaxialVacuumMfd}.

Precisely, the vacuum manifold corresponding to the potential \eqref{sexticbulk} is
\begin{equation}
 \Nb:=\{\RR\mathbf D\RR^{\T}:\RR\in \mathrm{SO}(3)\}\subset\mathbb{S}_0,
 \label{eq:vacuum-orbit}
\end{equation}
where 
\[
\mathbf D:=r_*\diag(1,-1,0),
\]
and $r_*>0$ is a constant determined by $a_2$, $a_4$, $a_6$, and $a_6'$.
Thus, any element of $\Nb$ has three distinct eigenvalues
$r_*,-r_*$, and $0$.  Choosing an oriented eigenframe identifies its
rotational degrees of freedom with
$\mathrm{SO}(3)/(\mathbb Z_2\times\mathbb Z_2)$, equivalently with
$\mathbb S^3/Q_8$, where $Q_8$ is the quaternion group.

Let $\Omega\subset\R^3$ be a bounded domain.  For
$U\subset\subset\Omega$, set
\begin{equation}
 E(\Q;U):=\f12\int_U|\nabla \Q|_F^2\,\ud x,
 \quad
 \Q\in H^1(U;\Nb).\label{EQU}
\end{equation}
We next recall the bounded-energy result of Wang--Zhang \cite{WZ24}.  Suppose
that $\{\Q_\varepsilon\}_{\varepsilon\in(0,1)}\subset
H^1(\Omega;\mathbb{S}_0)$ is a family of local minimizers of
$E_\varepsilon$ that satisfy, for some $M>0$,
\[
 E_\varepsilon(\Q_\varepsilon;\Omega)\leq M,
 \quad
 \|\Q_\varepsilon\|_{L^\infty(\Omega)}\leq M.
\]
By \cite[Theorem~1.4]{WZ24}, there exist a
sequence $\varepsilon_n\to 0^+$ and a map
$\Q_0\in H^1(\Omega;\Nb)$ such that
\begin{gather*}
 \Q_{\varepsilon_n}\to\Q_0\text{ strongly in }H^1_{\mathrm{loc}}(\Omega;\mathbb S_0),\\
 \frac{1}{\varepsilon_n^2}f_{\mathrm{bulk}}(\Q_{\varepsilon_n})\to0
 \quad\text{in }L^1_{\mathrm{loc}}(\Omega).
\end{gather*}
Moreover, $\Q_0$ is a local energy minimizer of \eqref{EQU} on
$\Omega$.  More precisely, $\Q_0\in H^1_{\mathrm{loc}}(\Omega;\Nb)$ and
satisfies
\begin{equation}
E(\Q_0;B)\leq E(\Q_{\mathrm c};B)
\label{eq:local-minimality}
\end{equation}
for any ball $B\subset\subset\Omega$ and any
$\Q_{\mathrm c}\in H^1(B;\Nb)$ satisfying $\Q_{\mathrm c}=\Q_0$ on
$\partial B$ in the trace sense.  In the standard harmonic-map terminology
\cite{SU82}, the regular and singular sets of $\Q_0$ are
\begin{align*}
\operatorname{Reg}(\Q_0)&:=\{x\in\Omega:\exists r>0\text{ such that }\Q_0\in C^0(B_r(x);\Nb)\},\\
\operatorname{Sing}(\Q_0)&:=\Omega\backslash\operatorname{Reg}(\Q_0).
\end{align*}
It follows from \cite[Theorems~I--II]{SU82} that
$\operatorname{Sing}(\Q_0)$ is closed and, in dimension three, discrete.
Wang--Zhang \cite{WZ24} also proved that, away from
$\operatorname{Sing}(\Q_0)$, for any integer $j\geq0$,
\[
\Q_{\varepsilon_n}\to\Q_0\text{ in }C^j_{\mathrm{loc}}(\Omega;\Nb).
\]
We write $\Spts:=\operatorname{Sing}(\Q_0)$ and refer to it as the point-defect
set. The purpose of this paper is to show that $\Spts$ is empty in the biaxial
setting.  Our main result is the following.

\begin{thm}\label{thm:main}
Assume that $\Q_0\in H_{\mathrm{loc}}^1(\Omega;\Nb)$ is a local minimizer of
\eqref{EQU}.  Then $\Q_0\in C_{\mathrm{loc}}^{\infty}(\Omega;\Nb)$ and, in
particular, $\Spts=\emptyset$.
\end{thm}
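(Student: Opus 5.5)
We argue by contradiction along the standard Schoen--Uhlenbeck route. By \cite[Theorems~I--II]{SU82}, $\Spts$ is discrete and $\Q_0$ is smooth on $\Omega\setminus\Spts$, so it suffices to show that no point $x_0\in\Spts$ exists. Suppose $x_0\in\Spts$. Monotonicity of the normalized energy $r^{-1}E(\Q_0;B_r(x_0))$ and the compactness of minimizers give a blow-up sequence $\Q_0(x_0+r_k\,\cdot)$ that converges strongly in $H^1_{\mathrm{loc}}$ to a tangent map $\Q_*(x)=\omega(x/|x|)$. Here $\Q_*$ is again a local minimizer of \eqref{EQU} on $\R^3$, and it is nonconstant because the density at $x_0$ is positive (the $\varepsilon$-regularity theorem). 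Homogeneity and minimality force $\omega:\mathbb S^2\to\Nb$ to be a smooth nonconstant harmonic map. Since $\mathbb S^2$ is simply connected, $\omega$ lifts to a harmonic map $\tilde\omega:\mathbb S^2\to\mathbb S^3$, where $\mathbb S^3$ carries the pulled-back Frobenius metric.

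The second step identifies this metric. Parametrize $\Nb$ by $\RR\mathbf D\RR^{\T}$, with $\RR$ the image of a unit quaternion. The left-invariant vector fields $E_1,E_2,E_3$ on $\mathbb S^3$ generate rotations about the eigenaxes of $\mathbf D$. Infinitesimal rotations $\Omega_i$ give $|[\Omega_i,\mathbf D]|_F^2$ equal to $(\lambda_j-\lambda_k)^2$ times a constant, where the eigenvalues are $r_*,-r_*,0$. This yields the weights $r_*^2,r_*^2,4r_*^2$, up to a common factor. The metric is therefore left-invariant and diagonal with two equal eigenvalues, so it is a rescaled Berger metric $g_\tau$ in which the Hopf direction (rotation about the $0$-eigenvector axis) is stretched, with $\tau=4$ relative to the horizontal directions. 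The key structural fact is that the Hopf fibers are lengthened, not shrunk. I would confirm which axis is distinguished by direct computation.

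The third step derives a stability inequality and contradicts it. Minimality of $\Q_*$ under compactly supported variations of the form $\phi(|x|)V(\theta)$, where $V$ is a section of $\tilde\omega^*T\mathbb S^3$, gives a shifted stability inequality on the link, after optimizing over radial $\phi$ as in Schoen--Uhlenbeck. This Hardy-type optimization produces the shift $\tfrac14$:
\begin{equation*}
\int_{\mathbb S^2}\Bigl(|\nabla V|^2-\mathrm{Rm}(d\tilde\omega,V,V,d\tilde\omega)\Bigr)+\frac14\int_{\mathbb S^2}|V|^2\ \ge\ 0 .
\end{equation*}
We then construct a test field adapted to the Hopf direction. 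The natural candidates are $V=\xi$ (the unit Hopf field), or $V=\xi$ combined with conformal-type fields $\nabla\psi$ of $\mathbb S^2$ pushed forward. Using the Berger curvature formulas — sectional curvature $\tau$ on vertical planes and $4-3\tau$ on horizontal ones, after normalization — and $\nabla_X\xi=\sqrt\tau\,JX$-type identities, we compute the second variation. The goal is the bound $Q(V)\le -c\int|d\tilde\omega|^2$ with $c$ beating $\tfrac14\int|V|^2$. Harmonicity and conformality of $\omega$ (every harmonic $2$-sphere is conformal) should give the relation $\int|d\tilde\omega^{\mathrm h}|^2$ versus $\int|d\tilde\omega^{\mathrm v}|^2$ needed to control the mixed terms. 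We also need the energy lower bound for nonconstant harmonic spheres, namely the energy is at least $4\pi$ times the minimal curvature scale. With these, the inequality yields a contradiction, hence $\Spts=\emptyset$, and smoothness follows from \cite{SU82} and elliptic regularity.

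The main obstacle is this third step. The round-sphere argument of Schoen--Uhlenbeck uses the conformal fields of the target, and these are not available here. The Berger curvature also has mixed signs once $\tau>4/3$, so the instability estimate must exploit the Hopf field together with conformality to absorb the negative horizontal curvature. If the pure Hopf field fails, I would add a horizontal correction $\nabla\psi\circ\omega$ with a tuned coefficient and optimize a $2\times2$ quadratic form.
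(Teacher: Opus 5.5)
Your setup agrees with the paper: the blow-up to a smooth nonconstant harmonic link, its lift to $\mathbb S^3$, the identification of the lifted Frobenius metric as a Berger metric, and the shifted stability inequality with constant $\frac14$. In that metric the Hopf direction, i.e.\ rotation about the $0$-eigenvector, is stretched by a factor $4$. The gap is the third step, which carries the whole theorem. You never produce a field $V$ with $\IF_{\widetilde\phi}^{g_4}(V,V)+\frac14\int_{\mathbb S^2}|V|_{g_4}^2\,\ud A_{g_{\mathrm r}}<0$, and your first candidate provably fails. Since $\xi$ is a unit Killing field, $\xi\circ\widetilde\phi$ is a Jacobi field with $g_4(\nabla\xi,\xi)=0$. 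Hence, for every $f\in C^\infty(\mathbb S^2)$, $\IF^{g_4}_{\widetilde\phi}(f\xi,f\xi)=\int_{\mathbb S^2}|\ud f|^2\,\ud A_{g_{\mathrm r}}$, so every purely vertical field satisfies the shifted inequality strictly. Horizontal corrections have their own problem: the horizontal sectional curvature $4-3\cdot 4=-8$ enters the index form with a stabilizing sign. You give no computation showing that a tuned $2\times2$ combination overcomes this. Your ``energy at least $4\pi$ times the minimal curvature scale'' bound is also not a usable statement for this target.

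The missing idea is to use that the link is a conformal branched minimal immersion and to compare the energy and area Hessians. The paper takes the global unit normal $N$, which is smooth through branch points, sets $C=g_4(N,\xi\circ\widetilde\phi)$ and $T=\xi\circ\widetilde\phi-CN$, and uses
\[
W=N+\frac{6C}{1+3C^2}\,T=\frac{1-3C^2}{1+3C^2}\,N+\frac{6C}{1+3C^2}\,\xi\circ\widetilde\phi .
\]
So the destabilizing field mixes the Hopf field with the \emph{normal}, not with horizontal conformal-type fields. The Abresch--Rosenberg holomorphic quadratic differential vanishes on $\mathbb S^2$. This expresses the second fundamental form through $\gamma=\widetilde\phi^*\xi^\flat$ and gives $\ud C=-\frac12(1+3C^2)(*\gamma)$. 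With these identities $W$ is infinitesimally isometric, so the Ejiri--Micallef comparison makes the energy Hessian of $W$ equal to the area Hessian of $N$. That Hessian is $-\int q\,\ud A_g$, where $q=|B|_g^2+\operatorname{Ric}_{g_4}(N,N)=\frac12(1+3C^2)^2>0$. Finally, $\frac q2 g$ has curvature $1$ with the same branch orders, so Gauss--Bonnet gives $\int q\,\ud A_g\geq 8\pi$. Meanwhile $|W|_{g_4}^2\leq\frac{13}{4}$ gives $\frac14\int|W|^2\leq\frac{13\pi}{4}<8\pi$. You need some substitute for these three ingredients: the isometric tangential correction, the explicit positive Jacobi potential, and the Gauss--Bonnet lower bound. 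Without them your argument does not reach a contradiction.
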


\begin{rem}\label{rem:comments}
Several comments are in order.
\begin{enumerate}
\item The heuristic for Theorem~\ref{thm:main} comes from the universal covering
space of $\Nb$, which is $\mathbb S^3$.  When
$\mathbb S^3$ is equipped with the round metric induced by its standard
embedding in $\mathbb R^4$, the regularity results of Schoen--Uhlenbeck
\cite{SU84} imply that three-dimensional energy-minimizing harmonic maps into
$\mathbb S^3$ do not have singularities.  The corresponding conclusion for stable
stationary harmonic maps follows from Lin--Wang \cite{LW06}.  These results
suggest the absence of point defects in the present biaxial problem.

\item The preceding item does not prove
Theorem~\ref{thm:main}, because one cannot simply lift $\Q_0$ through the
covering map and apply \cite{LW06,SU84}.  The Frobenius
metric on $\Nb$ lifts to a metric on $\mathbb S^3$ that differs from the round
metric induced by standard embedding $\mathbb S^3\subset\mathbb R^4$.
Krantz \cite[Proposition~8.1]{arXiv:2605.03809} likewise proves generalized results for the case where the target manifold is a Lie group. However, the codimension-four characterization of the singular set there requires a bi-invariant metric on the covering Lie group and hence does not apply to the present Berger metric. Understanding this change in the target metric is the main geometric issue addressed in this paper.

\item The absence of point defects does not exclude line defects.  In the
logarithmic-energy regime, nontrivial boundary data for global minimizers can
force line defects whose structure is governed by the induced homotopy class.
See Canevari--Fu--Wang \cite{CFW26} for a three-dimensional framework that
includes uniaxial and biaxial vacuum manifolds.
\item By contrast, point defects occur in the uniaxial theory.  The radial
hedgehog
\[
 \Q_{\mathrm H}(x)
 :=s_0\left(
   \f{x}{|x|}\otimes\f{x}{|x|}-\f13\I_3
 \right),
 \quad x\in\R^3\backslash\{0\},
\]
is the standard point-defect profile and is singular at the origin.  Theorem
\ref{thm:main} shows that no analogous interior point singularity can occur for
locally minimizing maps into the biaxial vacuum manifold $\Nb$.
\end{enumerate}
\end{rem}

\subsection{Difficulties, strategies, and novelty}

As explained in Remark~\ref{rem:comments}, the universal cover of $\Nb$ is
topologically $\mathbb S^3$, but the lifted Frobenius metric is not the round
metric. Consequently, the standard test vector fields used for the round
sphere in the regularity arguments of \cite{LW06,SU84} do not apply
directly here.  The first step of our argument is therefore to compute the
pullback of the Frobenius metric under the
quaternionic covering map and show that, after an explicit diffeomorphism and 
constant rescaling, it is the Berger metric $g_4$.  See
\eqref{eq:g4-pullback-isometry} and \cite{TU12}.

We then argue by contradiction.  Standard blow-up analysis \cite{SU82} shows
that a point singularity would produce a non-constant zero-homogeneous
minimizing tangent map.  Its link is a harmonic
two-sphere in $\Nb$, which lifts to a harmonic map into the Berger sphere
$(\mathbb S^3,g_4)$.  The minimizing property of the tangent cone yields a
shifted stability inequality for this lifted link.  We construct a vector
field along the lifted harmonic sphere that is adapted to the Berger metric
and differs from the round-sphere fields used in \cite{LW06,SU84}.  This field
combines the global normal of the branched minimal immersion with a tangential
correction determined by the
Hopf Killing field.  The resulting index estimate is incompatible with the
shifted stability inequality and therefore excludes any non-constant
blow-up limit.

The novelty lies in converting the Frobenius geometry of the biaxial vacuum
manifold into an explicit Berger geometry and in deriving a quantitative
instability direction that remains smooth across the branch points of the
harmonic two-sphere. This replaces round-sphere variation by a
target-specific construction and turns the metric obstruction described in
Remark~\ref{rem:comments} into the mechanism that rules out point defects.

\subsection{Organization of this paper}\label{subsec:organization}

The paper is organized as follows.  Section~\ref{sec:frobenius-pullback}
computes the Frobenius pullback metric and identifies it with a rescaled Berger
metric.  Section~\ref{sec:tangent-cones} derives the shifted stability
inequality satisfied by the link of a hypothetical tangent cone.
Section~\ref{sec:quantitative-instability} constructs the destabilizing vector
field and proves the quantitative instability estimate for harmonic
two-spheres.  Section~\ref{sec:cone-contradiction} combines these results to
contradict the cone stability and prove Theorem~\ref{thm:main}.

\subsection{Notation and conventions}\label{subsec:notation}

We close this section by specifying the geometric and algebraic notation used
throughout the proof.

\begin{itemize}
\item Smooth manifolds are denoted by calligraphic letters.  A Riemannian
manifold is written $(\M^m,h)$ or $(\cN^n,g)$, and $T\M$ is its tangent bundle.
For a smooth vector bundle $E\to\M$, the notation $C^\infty(\M,E)$ denotes
the space of smooth sections $s:\M\to E$. Thus, $s(x)\in E_x$ for any
$x\in\M$.  In particular, a smooth vector field is denoted by
$X\in C^\infty(\M,T\M)$.  For a smooth map $f:\M\to\cN$, a smooth vector
field along $f$ is an element $W\in C^\infty(\M,f^*T\cN)$, which means that
$W(x)\in T_{f(x)}\cN$ for any $x\in\M$.  A local $h$-orthonormal frame
$(e_1,\ldots,e_m)$ on $U\subset\M$ consists of smooth vector fields that
satisfy $h(e_a,e_b)=\delta_{ab}$.
\item The Levi--Civita connection of $(\cN^n,g)$ is denoted by $\nabla^g$.
For a smooth map $f:\M\to\cN$, the pullback bundle $f^*T\cN$ carries the
metric $f^*g$ and the induced connection $\nabla^{f^*g}$.  Our curvature
convention is
\[
 R^g(X,Y)Z
 :=\nabla_X^g\nabla_Y^gZ-\nabla_Y^g\nabla_X^gZ-\nabla_{[X,Y]}^gZ.
\]
The Riemannian volume measure is $\ud\vol_g$.  If $\dim\cN=2$, set
\[
 \ud A_g:=\ud\vol_g,
 \quad
 \Area_g(U):=\int_U\ud A_g.
\]
For $Y\in C^\infty(\cN,T\cN)$, its metric dual is
$Y^\flat:=g(Y,\cdot)\in C^\infty(\cN,T^*\cN)$.

\item $B_r^g(x):=\{y\in\M:\dist_g(x,y)<r\}$ is the open geodesic ball.  We
write $B_r(x)$ when the metric is clear.  In $\R^n$ this always denotes the
Euclidean ball.
\item The matrices and $\Q$-tensors are denoted by capital letters.
For a field $\mathbb F$, $M_k(\mathbb F)$ denotes the algebra of
$k\times k$ matrices with entries in $\mathbb F$.  The symbols $\A^{\T}$ and
$\I_k$ denote transpose and the $k\times k$ identity matrix, respectively.
For arbitrary real matrices, we use
\[
 \langle\A,\mathbf B\rangle_F:=\tr(\A^{\T}\mathbf B),
 \quad |\A|_F^2:=\langle\A,\A\rangle_F.
\]
On $\mathbb S_0$ this agrees with the convention in the introduction.
\item $\mathbb{S}^k\subset\R^{k+1}$ carries the unit round metric unless a
different metric is specified.  The area measure on the unit round
two-sphere is $\ud A_{g_{\mathrm r}}$.
\item In complex-coordinate formulas, $\mathrm{i}$ denotes the scalar
imaginary unit.  In quaternion formulas, the same symbol denotes the
quaternion basis element.  The context makes the meaning clear.
\end{itemize}

\section{The Frobenius pullback metric}\label{sec:frobenius-pullback}

This section constructs the quaternionic covering of $\Nb$ and identifies the
lifted Frobenius metric with a rescaled Berger metric.  We also collect the
harmonic-map conventions that were used later.

\subsection{Quaternionic rotations}\label{subsec:quaternionic-rotations}

We begin with the quaternionic notation.

\begin{defn}\label{def:quaternionic-notation}
Let $\mathbb H$ be the quaternion algebra with basis
$(1,\mathrm i,\mathrm j,\mathrm k)$ and multiplication
$\mathrm{i}^2=\mathrm{j}^2=\mathrm{k}^2=\mathrm{i}\mathrm{j}\mathrm{k}=-1$.
For $q=a_1+a_2\mathrm{i}+a_3\mathrm{j}+a_4\mathrm{k}$, set
\[
 \overline q:=a_1-a_2\mathrm{i}-a_3\mathrm{j}-a_4\mathrm{k},
 \quad
 |q|^2:=q\overline q=\sum_{\ell=1}^4a_\ell^2.
\]
\[
 \mathbb{S}^3:=\{q\in\mathbb H:|q|=1\}
 \subset\mathbb H,
 \quad
 \operatorname{Im}\mathbb H
 :=\operatorname{span}_{\R}\{\mathrm{i},\mathrm{j},\mathrm{k}\}
 \leftrightarrow\R^3,
 \quad
 x\mathrm{i}+y\mathrm{j}+z\mathrm{k}
 \leftrightarrow(x,y,z)^{\T}.
\]
\end{defn}

We next introduce the rotation associated with a unit quaternion and the
skew-symmetric matrix of a purely imaginary quaternion.

\begin{defn}\label{def:quaternionic-rotation}
For $q\in\mathbb{S}^3$ and $v\in\operatorname{Im}\mathbb H$, define $\RR_qv$ by
\[
 \RR_qv:=qv\overline q.
\]
For $a=x\mathrm{i}+y\mathrm{j}+z\mathrm{k}\in\operatorname{Im}\mathbb H$, let
\[
 \A_av:=a\times v,
 \quad
 \A_a=
 \begin{pmatrix}
 0&-z&y\\ z&0&-x\\ -y&x&0
 \end{pmatrix}.
\]
\end{defn}

The following lemma collects the standard covering properties that are used below.

\begin{lem}\label{lem:quaternionic-double-cover}
The map $q\mapsto\RR_q$ is a surjective homomorphism from $\mathbb S^3$ to
$\mathrm{SO}(3)$ with kernel $\{\pm1\}$.  Moreover,
\[
 \RR_{pq}=\RR_p\RR_q,
 \quad
 \RR_q=\RR_{-q},
\]
and
\begin{equation}
 \left.\f{\ud}{\ud t}\right|_{t=0}\RR_{\exp(ta)}=2\A_a
 \quad\text{for }a\in\operatorname{Im}\mathbb H.
 \label{eq:dR-at-identity}
\end{equation}
Thus, $q\mapsto\RR_q$ is the standard double covering of $\mathrm{SO}(3)$.
\end{lem}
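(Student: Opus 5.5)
The plan is to verify each assertion of Lemma~\ref{lem:quaternionic-double-cover} directly from quaternion algebra. Then identify the kernel and surjectivity via the standard Lie-theoretic argument.

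\emph{Well-definedness and homomorphism.} For $q\in\mathbb S^3$ and $v\in\operatorname{Im}\mathbb H$, I will first check that $qv\overline q\in\operatorname{Im}\mathbb H$. Indeed, $\overline{qv\overline q}=q\overline v\,\overline q=-qv\overline q$, so the real part vanishes. Next, $|qv\overline q|=|q||v||\overline q|=|v|$, so $\RR_q$ is a linear isometry of $\R^3$. The homomorphism property is immediate from associativity and $\overline{pq}=\overline q\,\overline p$:
\[
\RR_{pq}v=pqv\overline q\,\overline p=\RR_p\RR_qv.
\]
Clearly $\RR_{-q}=\RR_q$. Since $\mathbb S^3$ is connected, $q\mapsto\det\RR_q$ is continuous with values in $\{\pm1\}$, and $\det\RR_1=1$; hence $\RR_q\in\mathrm{SO}(3)$.

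\emph{Kernel.} Suppose $\RR_q=\I_3$. Then $q$ commutes with $\mathrm i,\mathrm j,\mathrm k$, hence with all of $\mathbb H$. The center of $\mathbb H$ is $\R$, so $q$ is real, and $|q|=1$ forces $q=\pm1$.

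\emph{Derivative formula.} Let $a\in\operatorname{Im}\mathbb H$. Differentiating $\exp(ta)v\exp(-ta)$ at $t=0$ gives $av-va$. For purely imaginary quaternions, $av=-a\cdot v+a\times v$ and $va=-a\cdot v-a\times v$. Therefore $av-va=2\,a\times v=2\A_av$, which is \eqref{eq:dR-at-identity}.

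\emph{Surjectivity.} The derivative formula shows that the differential at $1$ is $a\mapsto2\A_a$, an isomorphism $\operatorname{Im}\mathbb H\to\mathfrak{so}(3)$. By the homomorphism property, the differential is an isomorphism at every point, so the image is open in $\mathrm{SO}(3)$. The image is also compact, hence closed. Since $\mathrm{SO}(3)$ is connected, the map is onto.

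Alternatively, surjectivity can be shown explicitly. For a unit vector $u$, the element $q=\cos(\theta/2)+\sin(\theta/2)u$ produces the rotation by angle $\theta$ about $u$, and every element of $\mathrm{SO}(3)$ is such a rotation.

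There is no real obstacle in this argument. The only point needing care is the sign convention in the derivative: the factor $2$ together with the orientation of $\A_a$ must match the cross-product identity above.
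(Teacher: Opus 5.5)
Your proposal is correct and follows essentially the same route as the paper: quaternion algebra for the homomorphism property, connectedness to land in $\mathrm{SO}(3)$, the kernel as the unit quaternions commuting with $\operatorname{Im}\mathbb H$, the derivative from $av-va=2\,a\times v$, and surjectivity from an open-and-closed image. The only cosmetic difference is that you get closedness of the image from compactness of $\mathbb S^3$, whereas the paper uses that the complement of an open subgroup is a union of open cosets; both arguments work.
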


\begin{proof}
Quaternionic conjugation preserves $\operatorname{Im}\mathbb H$ and the norm.
Since $\mathbb S^3$ is connected and $\RR_1=\I_3$, it follows that
$\RR_q\in \mathrm{SO}(3)$.  Moreover,
\[
 \RR_{pq}=\RR_p\RR_q,
 \quad
 \RR_q=\RR_{-q}.
\]
The identities 
\begin{align*}
av=-a\cdot v+a\times v,\quad
va=-a\cdot v-a\times v
\end{align*}
give the derivative formula in \eqref{eq:dR-at-identity}.
Thus, $q\mapsto\RR_q$ has the full rank at the identity, so its image is an open
subgroup of $\mathrm{SO}(3)$.  Every left coset of the image is open, so the
complement of the image is also open. Thus, the image is both open and closed.
Since $\mathrm{SO}(3)$ is connected, the image is all of $\mathrm{SO}(3)$.
The kernel consists of the unit quaternions that commute with
$\operatorname{Im}\mathbb H$, namely $\{\pm1\}$.  Hence
$q\mapsto\RR_q$ is the standard double covering
$\mathbb S^3\to\mathrm{SO}(3)$.
\end{proof}

\subsection{The quaternionic cover of the vacuum manifold}
\label{subsec:vacuum-cover}

We now apply the double covering in
Lemma~\ref{lem:quaternionic-double-cover} to parameterize the biaxial vacuum
manifold.  The following definition introduces the covering map, its deck
group, and the matrix notation used for its differential.

\begin{defn}\label{def:vacuum-covering-map}
Define
\begin{equation}
 \PP:\mathbb{S}^3\to\Nb,
 \quad
 \PP(q):=\RR_q\mathbf D\RR_q^{\T}.
 \label{eq:covering-map}
\end{equation}
and set
\[
 Q_8:=\{\pm1,\pm\mathrm{i},\pm\mathrm{j},\pm\mathrm{k}\}.
\]
Let $(e_1,e_2,e_3)$ be the standard basis of $\R^3$.  For
$1\leq a<b\leq3$, set
\[
 \fS_{ab}:=e_a\otimes e_b+e_b\otimes e_a,
 \quad
 |\fS_{ab}|_F^2=2.
\]
\end{defn}

The next lemma identifies the fibers of $\PP$ and gives the differential
needed to compute the pullback metric.

\begin{lem}\label{lem:vacuum-covering}
The map $\PP$ is an eight-sheet covering with
\begin{equation}
 \PP^{-1}(\PP(q))=qQ_8.
 \label{eq:covering-fiber}
\end{equation}
The induced map
\begin{equation}
 \mathbb{S}^3/Q_8\to\Nb,
 \quad
 [q]\longmapsto \RR_q\mathbf D\RR_q^{\T},
 \label{eq:Q8-quotient-map}
\end{equation}
is a diffeomorphism, and the deck action is the right multiplication by $Q_8$.
At the identity,
\begin{equation}
 \ud\PP_1(\mathrm{i})=-2r_*\fS_{23},
 \quad
 \ud\PP_1(\mathrm{j})=-2r_*\fS_{13},
 \quad
 \ud\PP_1(\mathrm{k})=4r_*\fS_{12}.
 \label{eq:dP}
\end{equation}
\end{lem}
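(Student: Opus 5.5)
The plan is to use the left equivariance of $\PP$ together with Lemma~\ref{lem:quaternionic-double-cover}. We compute the differential at the identity, the stabilizer of $\mathbf D$ under conjugation, and then pass to the quotient by $Q_8$.

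\emph{Differential at the identity.} For $a\in\operatorname{Im}\mathbb H$, differentiating $\PP(\exp(ta))=\RR_{\exp(ta)}\mathbf D\RR_{\exp(ta)}^{\T}$ at $t=0$ and using \eqref{eq:dR-at-identity} together with $\A_a^{\T}=-\A_a$ gives
\[
 \ud\PP_1(a)=2(\A_a\mathbf D-\mathbf D\A_a).
\]
Since $\mathbf D$ is diagonal, $(\A_a\mathbf D-\mathbf D\A_a)_{mn}=(\A_a)_{mn}(D_n-D_m)$, where $(D_1,D_2,D_3)=r_*(1,-1,0)$. For $a=\mathrm i$ the only nonzero entries of $\A_{\mathrm i}$ are $(2,3)=-1$ and $(3,2)=1$. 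This yields the entries $r_*\cdot(-1)\cdot(0-(-1))=-r_*$ and $r_*\cdot 1\cdot(-1-0)=-r_*$, so $\ud\PP_1(\mathrm i)=-2r_*\fS_{23}$. The cases $a=\mathrm j$ and $a=\mathrm k$ are identical. In the latter case the eigenvalue gap is $D_2-D_1=-2r_*$, which produces the factor $4r_*$. Since $\fS_{23},\fS_{13},\fS_{12}$ are linearly independent, $\ud\PP_1$ is injective.

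\emph{Constant rank.} From $\PP(pq)=\RR_p\PP(q)\RR_p^{\T}$ it follows that $\ud\PP_p\circ\ud(L_p)_1=\mathrm{Ad}_{\RR_p}\circ\ud\PP_1$, where $\mathrm{Ad}_{\RR_p}$ denotes the linear isometry $\mathbf X\mapsto\RR_p\mathbf X\RR_p^{\T}$ of $\mathbb S_0$. Hence $\ud\PP_p$ is injective for every $p$. Because $\dim\Nb=3$, the map $\PP$ is a local diffeomorphism onto $\Nb$. Surjectivity onto $\Nb$ follows from the surjectivity of $q\mapsto\RR_q$.

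\emph{Fibers.} We have $\PP(p)=\PP(q)$ if and only if $\RR_{\bar q p}$ commutes with $\mathbf D$. The eigenvalues $r_*,-r_*,0$ of $\mathbf D$ are distinct, so such a rotation must preserve each coordinate axis. It is therefore diagonal with entries $\pm1$ and determinant $1$, i.e.\ it is one of $\I_3$ or the three rotations by $\pi$ about the coordinate axes. Since $\RR_{\pm\mathrm i}v=\mathrm i v\,\overline{\mathrm i}$ is the rotation by $\pi$ about $e_1$ (and similarly for $\mathrm j,\mathrm k$), and the kernel of $q\mapsto\RR_q$ is $\{\pm1\}$, the preimage of this stabilizer is exactly $Q_8$. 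This proves \eqref{eq:covering-fiber}.

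\emph{Quotient and covering.} Right multiplication by the finite group $Q_8$ is free on $\mathbb S^3$, so $\mathbb S^3\to\mathbb S^3/Q_8$ is an $8$-sheeted covering onto a smooth compact manifold. The induced map \eqref{eq:Q8-quotient-map} is well defined and, by \eqref{eq:covering-fiber}, a bijection onto $\Nb$. It is a local diffeomorphism because $\PP$ is. It is therefore a diffeomorphism, as a bijective local diffeomorphism; compactness is not even needed for this. Composing, $\PP$ is an eight-sheet covering whose deck transformations are the right multiplications by $Q_8$.

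The only step requiring genuine care is the stabilizer computation, namely matching the sign-diagonal rotations with $\pm\mathrm i,\pm\mathrm j,\pm\mathrm k$. The remaining steps are routine.
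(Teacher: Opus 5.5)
Your proof is correct and follows the paper's route. You identify the stabilizer of $\mathbf D$ and lift it to $Q_8$ for the fiber identity, use $\ud\PP_1(a)=2[\A_a,\mathbf D]$ for the differential, and use left-equivariance for constant rank; the only cosmetic difference is that you get the covering property by factoring through the free quotient $\mathbb S^3\to\mathbb S^3/Q_8$, whereas the paper invokes the fact that a proper surjective local diffeomorphism is a covering map.
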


\begin{proof}
Since the three eigenvalues of $\mathbf D=r_*\diag(1,-1,0)$ are distinct,
\[
 \begin{aligned}
 K_{\mathbf D}
 &:=\{\RR\in \mathrm{SO}(3):\RR\mathbf D\RR^{\T}=\mathbf D\}\\
 &=\left\{
 \I_3,
 \diag(1,-1,-1),
 \diag(-1,1,-1),
 \diag(-1,-1,1)
 \right\}.
 \end{aligned}
\]
The conjugation of $\mathrm i$, $\mathrm j$, and $\mathrm k$ changes the signs of the other two imaginary basis vectors.  Consequently,
\[
 Q_8=\RR^{-1}(K_{\mathbf D}).
\]
Since $\RR_{qh}=\RR_q\RR_h$ and
$\RR_h\mathbf D\RR_h^{\T}=\mathbf D$ for $h\in Q_8$,
\[
 \PP(qh)=\PP(q).
\]
In contrast,
\[
 \PP(q')=\PP(q)
 \quad\to\quad
 \RR_{q^{-1}q'}\in K_{\mathbf D}
 \quad\to\quad
 q^{-1}q'\in Q_8.
\]
Therefore, \eqref{eq:covering-fiber} holds, and the map in
\eqref{eq:Q8-quotient-map} is a bijection.

Identify $T_1\mathbb S^3$ with $\operatorname{Im}\mathbb H$.  Using the curve
$t\mapsto\exp(ta)$, equation \eqref{eq:dR-at-identity}, and
$\A_a^{\T}=-\A_a$, we obtain
\begin{align}
 \ud\PP_1(a)
 &=\left.\f{\ud}{\ud t}\right|_{t=0}
   \RR_{\exp(ta)}\mathbf D\RR_{\exp(ta)}^{\T}\notag\\
 &=2\A_a\mathbf D+2\mathbf D\A_a^{\T}\notag\\
 &=2(\A_a\mathbf D-\mathbf D\A_a)
 =2[\A_a,\mathbf D].
 \label{eq:dP-commutator}
\end{align}

For the ordered basis $(\mathrm{i},\mathrm{j},\mathrm{k})$ of
$T_1\mathbb{S}^3$,
\[
 \A_{\mathrm{i}}=
 \begin{pmatrix}0&0&0\\0&0&-1\\0&1&0\end{pmatrix},
 \quad
 \A_{\mathrm{j}}=
 \begin{pmatrix}0&0&1\\0&0&0\\-1&0&0\end{pmatrix},
 \quad
 \A_{\mathrm{k}}=
 \begin{pmatrix}0&-1&0\\1&0&0\\0&0&0\end{pmatrix}.
\]
Direct matrix multiplication with $\mathbf D=r_*\diag(1,-1,0)$ gives
\[
 \begin{aligned}
 \relax[\A_{\mathrm{i}},\mathbf D]
 &=\begin{pmatrix}
 0&0&0\\0&0&-r_*\\0&-r_*&0
 \end{pmatrix}
 =-r_*\fS_{23},\\
 [\A_{\mathrm{j}},\mathbf D]
 &=\begin{pmatrix}
 0&0&-r_*\\0&0&0\\-r_*&0&0
 \end{pmatrix}
 =-r_*\fS_{13},\\
 [\A_{\mathrm{k}},\mathbf D]
 &=\begin{pmatrix}
 0&2r_*&0\\2r_*&0&0\\0&0&0
 \end{pmatrix}
 =2r_*\fS_{12}.
 \end{aligned}
\]
Substituting into \eqref{eq:dP-commutator} gives \eqref{eq:dP}.
Consequently,
\[
|\ud\PP_1(\mathrm{i})|_F^2
=|\ud\PP_1(\mathrm{j})|_F^2=8r_*^2,
 \quad
 |\ud\PP_1(\mathrm{k})|_F^2=32r_*^2.
\]
In particular, $\rank\ud\PP_1=3$.  The identity
\[
 \PP(pq)=\RR_p\PP(q)\RR_p^{\T}
\]
implies $\rank\ud\PP_q=3$ for any $q\in\mathbb{S}^3$.  Hence $\PP$ is an eight-sheet covering: it is surjective by the definition of $\Nb$, it is proper because
$\mathbb S^3$ is compact, and a proper surjective local diffeomorphism is a
covering map.  The fiber identity in \eqref{eq:covering-fiber} shows that any
fiber has eight points. The induced bijection in \eqref{eq:Q8-quotient-map} is therefore a diffeomorphism.
\end{proof}

\subsection{Identification with a Berger metric}
\label{subsec:berger-identification}

We next introduce the Hopf field and the Berger metric that appears in the
pullback of the Frobenius metric.

\begin{defn}\label{def:hopf-field}
The standard diagonal circle action on $\mathbb{S}^3$ is
\[
\Phi_t(X):=(X_1\cos t-X_2\sin t,
 X_1\sin t+X_2\cos t,X_3\cos t-X_4\sin t,
 X_3\sin t+X_4\cos t),
\]
where $t\in\R/(2\pi\Z)$.  Its oriented orbits $t\mapsto\Phi_t(X)$ are the
Hopf fibers.  The round Hopf field is the infinitesimal generator of this
circle action:
\begin{equation}
 V_{\mathrm H}\in C^\infty(\mathbb{S}^3,T\mathbb{S}^3),
 \quad
 V_{\mathrm H}(X):=\left.\f{\ud}{\ud t}\right|_{t=0}\Phi_t(X)
 =(-X_2,X_1,-X_4,X_3).
 \label{eq:round-Hopf-field}
\end{equation}
It satisfies
\[
 V_{\mathrm H}(X)\cdot X=0,
 \quad
 |V_{\mathrm H}(X)|_{g_{\mathrm r}}^2
 =X_1^2+X_2^2+X_3^2+X_4^2=1.
\]
\end{defn}

The pullback computation below singles out the Hopf direction.  We therefore
introduce a Berger metric that weights this direction by a factor of four
relative to the round metric.

\begin{defn}\label{def:berger-metric}
For each $p\in\mathbb{S}^3$, define the bilinear form
$(g_4)_p:T_p\mathbb S^3\times T_p\mathbb S^3\to\mathbb R$ by
\begin{equation}
 (g_4)_p(U,W)
 :=(g_{\mathrm r})_p(U,W)
 +3(g_{\mathrm r})_p(U,V_{\mathrm H})
    (g_{\mathrm r})_p(W,V_{\mathrm H}).
 \label{eq:Berger-convention}
\end{equation}
This defines the Berger metric $g_4$ on $\mathbb S^3$.
\end{defn}

The next lemma identifies the lifted Frobenius metric with $g_4$ up to an
explicit diffeomorphism and a constant factor.

\begin{lem}\label{lem:frobenius-berger-isometry}
There is an explicit diffeomorphism $\Theta:\mathbb{S}^3\to\mathbb{S}^3$ such that
\begin{equation}
 \PP^*g_F=\Theta^*(8r_*^2g_4).
 \label{eq:g4-pullback-isometry}
\end{equation}
Consequently,
\begin{equation}
 (\mathbb{S}^3,\PP^*g_F)
 \cong(\mathbb{S}^3,8r_*^2g_4).
 \label{eq:g4-identification}
\end{equation}
\end{lem}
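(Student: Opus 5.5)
The plan is to exploit the equivariance of $\PP$ under left multiplication. This reduces the computation of $\PP^*g_F$ to the single tangent space $T_1\mathbb S^3$, where Lemma~\ref{lem:vacuum-covering} already gives the answer. We then match the resulting left-invariant metric with $g_4$, which turns out to be right-invariant in a suitable sense, by composing two round isometries.

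\emph{Step 1: left-invariance.} From the identity $\PP(pq)=\RR_p\PP(q)\RR_p^{\T}$ in the proof of Lemma~\ref{lem:vacuum-covering}, we have $\PP\circ L_p=C_p\circ\PP$, where $L_p(q)=pq$ and $C_p(\A)=\RR_p\A\RR_p^{\T}$. Since $C_p$ is a linear Frobenius isometry of $\mathbb S_0$, it follows that $L_p^*\PP^*g_F=\PP^*g_F$. Hence $h:=\PP^*g_F$ is left-invariant, and it is determined by its value at $1$. By \eqref{eq:dP}, the images $\ud\PP_1(\mathrm i),\ud\PP_1(\mathrm j),\ud\PP_1(\mathrm k)$ are multiples of $\fS_{23},\fS_{13},\fS_{12}$, which are mutually Frobenius-orthogonal. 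Their squared norms are $8r_*^2,8r_*^2,32r_*^2$. Therefore
\[
 h_q(qa,qb)=8r_*^2\bigl(a\cdot b+3(a\cdot\mathrm k)(b\cdot\mathrm k)\bigr),
 \quad a,b\in\operatorname{Im}\mathbb H,
\]
that is, $h=8r_*^2\bigl(g_{\mathrm r}+3\,(g_{\mathrm r}(\cdot,q\mathrm k))^{\otimes2}\bigr)$. Here we use that $g_{\mathrm r}$ is bi-invariant, so that $g_{\mathrm r}(qa,qb)=a\cdot b$.

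\emph{Step 2: identify the Hopf field.} A direct coordinate check shows $V_{\mathrm H}(X)=\mathrm iX$, using left multiplication by $\mathrm i$. So $V_{\mathrm H}$ is the \emph{right}-invariant field generated by $\mathrm i$, and \eqref{eq:Berger-convention} reads $g_4=g_{\mathrm r}+3\,(g_{\mathrm r}(\cdot,\mathrm i\,p))^{\otimes2}$. Two mismatches with $h$ therefore remain: left versus right invariance, and the distinguished direction $\mathrm k$ versus $\mathrm i$.

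\emph{Step 3: construct $\Theta$.} We take $\Theta:=\sigma\circ\iota$, where $\iota(q)=\overline q$ and $\sigma(q)=uq\overline u$. The unit quaternion $u$ is chosen so that $u\mathrm k\overline u=\mathrm i$; for instance, $u$ realizes the cyclic permutation $\mathrm k\mapsto\mathrm i\mapsto\mathrm j\mapsto\mathrm k$, and such $u$ exists by the surjectivity in Lemma~\ref{lem:quaternionic-double-cover}. Both maps are round isometries, because $g_{\mathrm r}$ is bi-invariant. For the inversion, differentiating $(qe^{ta})^{-1}=e^{-ta}\overline q$ gives $\ud\iota_q(qa)=-a\,\overline q$, so $\iota$ sends the left-invariant field $q\mathrm k$ to minus the right-invariant field $\mathrm k\,p$. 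For the conjugation, since $\sigma$ is an automorphism, $\ud\sigma(\mathrm k p)=\sigma(\mathrm k)\sigma(p)=\mathrm i\,\sigma(p)=V_{\mathrm H}(\sigma(p))$. Combining these facts, $\Theta^*g_{\mathrm r}=g_{\mathrm r}$ and $\Theta^*\bigl(g_{\mathrm r}(\cdot,V_{\mathrm H})^2\bigr)=g_{\mathrm r}(\cdot,q\mathrm k)^2$, where the sign disappears upon squaring. Hence $\Theta^*(8r_*^2g_4)=h$, which is \eqref{eq:g4-pullback-isometry}, and \eqref{eq:g4-identification} follows because $\Theta$ is a diffeomorphism.

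The only delicate point is the bookkeeping of left versus right multiplication and of the signs in Step 2–3. Getting these wrong would produce a $\Theta$ that relates $h$ to a Berger metric along the wrong Hopf fibration. For this reason I will verify $V_{\mathrm H}(X)=\mathrm iX$ explicitly in coordinates before committing to the choice of $\Theta$.
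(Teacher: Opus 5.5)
Your proposal is correct and follows essentially the paper's route: you use the same left-invariance reduction via $\PP(pq)=\RR_p\PP(q)\RR_p^{\T}$ and the same identification $V_{\mathrm H}(q)=\mathrm i q$. Your $\Theta$ is also the paper's, because inversion commutes with conjugation, so your $\sigma\circ\iota$ equals the paper's $\iota\circ C_u$; the only difference there is the harmless choice of $u$ with $u\mathrm k\overline u=\mathrm i$, where the paper takes $u=(1+\mathrm j)/\sqrt2$. The one genuine difference is the final verification: the paper checks agreement at $1$ and uses that the anti-homomorphism $\Theta$ turns right invariance of $g_4$ into left invariance, whereas you verify the pullback at every point using that $\Theta$ is a round isometry with $\ud\Theta(q\mathrm k)=-V_{\mathrm H}\circ\Theta$, which is equally valid.
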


\begin{proof}
For $q\in\mathbb{S}^3$, let
\[
 E_1(q):=q\mathrm{i},
 \quad
 E_2(q):=q\mathrm{j},
 \quad
 E_3(q):=q\mathrm{k}.
\]
This is a round-orthonormal left-invariant frame.  Taking
$(\sigma_1,\sigma_2,\sigma_3)$ as its dual co-frame, the differential values
in \eqref{eq:dP} give
\begin{equation}
 \f1{8r_*^2}\PP^*g_F
 =\sigma_1^2+\sigma_2^2+4\sigma_3^2.
 \label{eq:left-Berger-form}
\end{equation}

Under the unit-quaternion identification, \eqref{eq:round-Hopf-field} is
\[
 V_{\mathrm H}(q)=\mathrm{i}q.
\]
Thus, $V_{\mathrm H}$ is right invariant, and so is $g_4$.  At $1\in\mathbb{S}^3$,
\begin{equation}
 (g_4)_1(a,a)
 =|a|^2+3\langle a,\mathrm{i}\rangle^2,
 \quad a\in\operatorname{Im}\mathbb H.
 \label{eq:g4-at-identity}
\end{equation}

Set
\[
 u:=\f{1+\mathrm{j}}{\sqrt2},
 \quad
 C_u(q):=uqu^{-1},
 \quad
 \iota(q):=q^{-1},
 \quad
 \Theta:=\iota\circ C_u.
\]
Then $C_u$ is an inner automorphism of
the unit-quaternion group $\mathbb{S}^3$, identified with $SU(2)$.
The map $\iota$ is group inversion, and
\[
 u\mathrm{k}u^{-1}=\mathrm{i},
 \quad
 \ud\Theta_1(a)=-uau^{-1}.
\]
Since $\operatorname{Ad}_u:a\mapsto uau^{-1}$ is orthogonal,
\begin{align}
 (\Theta^*g_4)_1(a,a)
 &=|a|^2+3\langle\operatorname{Ad}_u a,\mathrm{i}\rangle^2=|a|^2+3\langle a,\mathrm{k}\rangle^2.
 \label{eq:theta-metric-at-identity}
\end{align}
Hence \eqref{eq:left-Berger-form} and $\Theta^*g_4$ agree at the identity.

Moreover, $\Theta$ is an anti-homomorphism:
\[
 \Theta(pq)=\Theta(q)\Theta(p).
\]
Therefore, for any $p\in\mathbb{S}^3$,
\[
 \Theta\circ L_p=R_{\Theta(p)}\circ\Theta.
\]
The right invariance of $g_4$ implies that $\Theta^*g_4$ is left invariant.
Both sides of
\[
 \f1{8r_*^2}\PP^*g_F=\Theta^*g_4
\]
are consequently left invariant and agree at the identity.  This proves
\eqref{eq:g4-pullback-isometry} and \eqref{eq:g4-identification}.
\end{proof}

We next prove the Hopf-field identity used in the surface calculations of
Section~\ref{sec:quantitative-instability}.

\begin{lem}\label{lem:berger-hopf-field}
Set
\begin{equation}
 \xi:=\f12V_{\mathrm H}\in C^\infty(\mathbb S^3,T\mathbb S^3),
 \quad
 \xi^\flat:=g_4(\xi,\cdot)
 \in C^\infty(\mathbb S^3,T^*\mathbb S^3).
 \label{eq:g4-unit-Hopf-field}
\end{equation}
Orient $(\mathbb S^3,g_4)$ by declaring the right-invariant
$g_4$-orthonormal frame
\[
 \bigl(\xi(q),K(q),J(q)\bigr),
 \quad K(q):=\mathrm{k}q,
 \quad J(q):=\mathrm{j}q,
\]
to be positive.  Let $\times_{g_4}$ denote the corresponding metric cross
product, characterized by
\[
 g_4(U\mathbin{\times_{g_4}}V,W)
 =\operatorname{vol}_{g_4}(U,V,W)
 \quad\text{for }q\in\mathbb S^3\text{ and }U,V,W\in T_q\mathbb S^3.
\]
Then $\xi$ is a unit Killing field, and
\begin{equation}
 \nabla_X^{g_4}\xi=2X\mathbin{\times_{g_4}}\xi.
 \label{eq:bundle-curvature-two}
\end{equation}
\end{lem}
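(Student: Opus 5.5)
The plan is to work entirely in the right-invariant frame $(\xi,K,J)$. The metric $g_4$ is right invariant (this was noted in the proof of Lemma~\ref{lem:frobenius-berger-isometry}), so all metric coefficients in this frame are constant. The Levi--Civita connection is then determined algebraically by the Koszul formula from the Lie brackets of the frame fields.

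\emph{Step 1: unit length and orthonormality.} Since $V_{\mathrm H}(q)=\mathrm i q$ and $\ud R_q$ is a round isometry, \eqref{eq:g4-at-identity} gives $g_4(V_{\mathrm H},V_{\mathrm H})=1+3=4$. Hence $|\xi|_{g_4}=1$. The same formula shows that $\mathrm kq$ and $\mathrm jq$ are $g_4$-unit and $g_4$-orthogonal to $\xi$ and to each other, because $\mathrm j,\mathrm k\perp\mathrm i$. So $(\xi,K,J)$ is indeed a $g_4$-orthonormal frame.

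\emph{Step 2: $\xi$ is Killing.} The flow of $V_{\mathrm H}(q)=\mathrm iq$ is left multiplication $L_{e^{t\mathrm i}}$. For a right-invariant field $aq$ we have $\ud L_p(aq)=(pap^{-1})\,pq$. Thus $L_p$ maps the right-invariant field of $a$ to that of $\operatorname{Ad}_p a$. For $p=e^{t\mathrm i}$, $\operatorname{Ad}_p$ fixes $\mathrm i$ and rotates the span of $\mathrm j,\mathrm k$ orthogonally. It therefore preserves the form $|a|^2+3\langle a,\mathrm i\rangle^2$ of \eqref{eq:g4-at-identity}. Combined with right invariance, each $L_{e^{t\mathrm i}}$ is a $g_4$-isometry, so $\xi$ is a Killing field.

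\emph{Step 3: brackets and Koszul.} Regard $X_a(q)=aq$ as linear vector fields on $\R^4$. Then $[X_a,X_b]=-X_{[a,b]}$, with $[\mathrm i,\mathrm j]=2\mathrm k$ and cyclic permutations. Writing $e_0=\xi$, $e_1=K$, $e_2=J$, this gives
\[
[e_0,e_1]=e_2,\quad [e_1,e_2]=4e_0,\quad [e_2,e_0]=e_1 .
\]
The Koszul formula $2g_4(\nabla_X Y,Z)=g_4([X,Y],Z)-g_4([Y,Z],X)+g_4([Z,X],Y)$ on frame fields then yields the following:
\[
\nabla_{e_0}e_0=0,\quad \nabla_{e_1}e_0=-2e_2,\quad \nabla_{e_2}e_0=2e_1 .
\]
For example, $2g_4(\nabla_{e_1}e_0,e_2)=-1+1-4=-4$. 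Also $g_4(\nabla_X e_0,e_0)=0$ since $e_0$ has unit length. With the chosen positive orientation we have $e_0\times e_1=e_2$ and $e_2\times e_0=e_1$. Therefore $2e_1\times e_0=-2e_2$, $2e_2\times e_0=2e_1$, and $2e_0\times e_0=0$. These match the three derivatives above. Both sides of \eqref{eq:bundle-curvature-two} are $C^\infty$-linear in $X$, so the identity holds for every $X$.

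The main obstacle is bookkeeping rather than any conceptual difficulty. The sign in $[X_a,X_b]=-X_{[a,b]}$, the factor $\tfrac12$ in $\xi$, and the orientation convention must be consistent, and a slip in any one flips the sign or changes the constant $2$. I will double-check the result against the Killing antisymmetry $g_4(\nabla_{e_1}\xi,e_2)=-g_4(\nabla_{e_2}\xi,e_1)$. I will also check it against the general Berger fact that $\nabla\xi$ rotates $\xi^\perp$ with speed equal to half the bracket coefficient $g_4([e_1,e_2],e_0)=4$.
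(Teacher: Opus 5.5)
Your proposal is correct and follows essentially the same route as the paper. Unit length comes from $|V_{\mathrm H}|_{g_4}^2=4$, and the Killing property comes from invariance of $g_4$ under the Hopf circle action; you check that invariance via $\operatorname{Ad}_{e^{t\mathrm i}}$ plus right invariance, where the paper simply notes that the action preserves $g_{\mathrm r}$ and $V_{\mathrm H}$. The Koszul formula is then applied to the right-invariant frame with exactly the brackets $[\xi,K]=J$, $[\xi,J]=-K$, $[J,K]=-4\xi$, yielding the same derivatives $\nabla_K\xi=-2J$, $\nabla_J\xi=2K$, $\nabla_\xi\xi=0$ and the same cross-product comparison.
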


\begin{proof}
The Hopf circle action preserves both $g_{\mathrm r}$ and $V_{\mathrm H}$,
so it preserves $g_4$ and $V_{\mathrm H}$ is a $g_4$-Killing field.  By
\eqref{eq:Berger-convention},
\[
 |V_{\mathrm H}|_{g_4}^2=4,
\]
and hence $\xi$ is a unit Killing field. For right-invariant vector fields,
the Koszul formula is
\[
 2g_4(\nabla_X^{g_4}Y,Z)
 =g_4([X,Y],Z)-g_4([Y,Z],X)+g_4([Z,X],Y).
\]
Quaternion multiplication gives
\[
 [\xi,J]=-K,
 \quad [\xi,K]=J,
 \quad [J,K]=-4\xi.
\]
Substitution yields
$\nabla_J^{g_4}\xi=2K$, $\nabla_K^{g_4}\xi=-2J$, and
$\nabla_\xi^{g_4}\xi=0$.  The definition of $\times_{g_4}$ then gives
\eqref{eq:bundle-curvature-two}.
\end{proof}

\subsection{Harmonic maps and second variation}
\label{subsec:harmonic-map-conventions}

We finish the section by defining the tension field and index form and 
stating their invariance properties.

\begin{defn}
\label{def:tension-field}
Let $f:(\M^m,h)\to(\cN^n,g)$ be smooth.  The target Levi--Civita connection
induces the pullback connection $\nabla^{f^*g}$ on $f^*T\cN$.  The tension
field is
\[
 \tau_{h,g}(f)
 :=\operatorname{tr}_h(\nabla\ud f)
 =\sum_{\al=1}^m
 \left(
  \nabla^{f^*g}_{e_\al}\bigl(\ud f(e_\al)\bigr)
  -\ud f(\nabla^h_{e_\al}e_\al)
 \right),
\]
where $(e_\al)$ is a local $h$-orthonormal frame.  $f$ is a
harmonic map if $\tau_{h,g}(f)=0$, equivalently, if it is critical for the
Dirichlet energy under any compactly supported smooth variation.
\end{defn}

The next definition specifies the second-variation convention used throughout
the proof.

\begin{defn}
Let $f:(\M^m,h)\to(\mathbb S^3,g_4)$ be a harmonic map and let
$W\in C_c^\infty(\M,f^*T\mathbb S^3)$.  Its index form is
\begin{equation}
 \begin{aligned}
 \IF_{f,h}^{g_4}(W,W)
 &:={\left.\f{\ud^2}{\ud t^2}\right|}_{t=0}
 \left(\f12\int_\M|\ud f_t|_{h,g_4}^2\,\ud\vol_h\right)\\
 &=\int_\M\left(
   |\nabla^{f^*g_4}W|_{h,g_4}^2
   -\sum_{\al=1}^m
    g_4\left(
     R^{g_4}(W,\ud f(e_\al))\ud f(e_\al),W
    \right)
  \right)\,\ud\vol_h,
 \end{aligned}
 \label{eq:index-form-definition}
\end{equation}
where $f_t$ is any compactly supported variation with initial field $W$.
When the source metric is clear, we write
$\IF_f^{g_4}:=\IF_{f,h}^{g_4}$.  We always retain the target-metric
superscript.
\end{defn}

The following elementary invariances allow us to normalize both the source and
the target metrics later.

\begin{lem}\label{lem:index-form-invariances}
If $\dim\M=2$ and $\widehat h=e^{2\omega}h$, then the harmonicity and the
index form are unchanged by this conformal change of the source metric:
\begin{equation}
 \tau_{\widehat h,g_4}(f)=e^{-2\omega}\tau_{h,g_4}(f),
 \quad
 \IF_{f,\widehat h}^{g_4}(W,W)=\IF_{f,h}^{g_4}(W,W).
 \label{eq:source-conformal-invariance}
\end{equation}
If $g_4$ is replaced by $cg_4$, $c>0$, then
\begin{equation}
 \IF_{f,h}^{cg_4}(W,W)=c\IF_{f,h}^{g_4}(W,W),
 \quad
 \int_\M|W|_{cg_4}^2\,\ud\vol_h
 =c\int_\M|W|_{g_4}^2\,\ud\vol_h.
 \label{eq:homothety}
\end{equation}
\end{lem}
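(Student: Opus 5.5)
I will verify the two groups of identities separately, by direct computation in local coordinates (first statement) and by scaling (second statement).

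\textbf{Conformal change of a two-dimensional source.} Take a local $h$-orthonormal frame $(e_1,e_2)$. Then $\widehat e_\al:=e^{-\omega}e_\al$ is $\widehat h$-orthonormal, and
\[
 \ud\vol_{\widehat h}=e^{2\omega}\,\ud\vol_h .
\]

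For the tension field, I will use the standard conformal-change formula for the Levi--Civita connection,
\[
 \nabla^{\widehat h}_XY=\nabla^h_XY+X(\omega)Y+Y(\omega)X-h(X,Y)\nabla^h\omega .
\]
Taking the trace over the frame gives
\[
 \sum_\al\nabla^{\widehat h}_{\widehat e_\al}\widehat e_\al
 =e^{-2\omega}\Bigl(\sum_\al\nabla^h_{e_\al}e_\al+(2-m)\nabla^h\omega\Bigr)-(\text{terms from differentiating }e^{-\omega}).
\]
A more economical route is the coordinate expression $\tau=\Delta_h f^k+h^{\al\beta}\Gamma^k_{ij}(f)\partial_\al f^i\partial_\beta f^j$ in local coordinates. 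When $m=2$, both $\Delta_{\widehat h}=e^{-2\omega}\Delta_h$ and $\widehat h^{\al\beta}=e^{-2\omega}h^{\al\beta}$ hold, so
\[
 \tau_{\widehat h,g_4}(f)=e^{-2\omega}\tau_{h,g_4}(f).
\]
In particular, $f$ is $\widehat h$-harmonic if and only if it is $h$-harmonic.

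For the index form, each integrand in \eqref{eq:index-form-definition} is a full contraction of the form $h^{\al\beta}T_{\al\beta}$. Here
\[
 T_{\al\beta}=g_4\bigl(\nabla_{\partial_\al}W,\nabla_{\partial_\beta}W\bigr)-g_4\bigl(R^{g_4}(W,\partial_\al f)\partial_\beta f,W\bigr),
\]
and neither $\nabla^{f^*g_4}W$ nor $R^{g_4}$ involves the source metric. Therefore the integrand scales by $e^{-2\omega}$, which cancels exactly against $\ud\vol_{\widehat h}=e^{2\omega}\ud\vol_h$. This is just the conformal invariance of the two-dimensional Dirichlet energy applied to the variation $f_t$. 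Differentiating twice in $t$ gives the index-form identity directly, and this is the argument I would actually write down.

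\textbf{Homothety of the target.} Replacing $g_4$ by $cg_4$ leaves the Levi--Civita connection unchanged, since the Koszul formula is homogeneous. Hence $\nabla^{f^*(cg_4)}=\nabla^{f^*g_4}$, and the $(1,3)$ curvature tensor $R^{cg_4}=R^{g_4}$ is also unchanged. Both integrands therefore acquire exactly the factor $c$ from the outer metric pairing, and so does $|W|^2_{cg_4}$. Equivalently, the energy of $f_t$ scales by $c$, so its second derivative does too.

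There is no real obstacle here. The only point requiring care is the first statement: the cancellation relies on $m=2$, and the index form is well defined independently of the variation precisely because $f$ is harmonic. That holds for $\widehat h$ as well, by the tension identity just proved.
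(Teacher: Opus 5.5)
Your proposal is correct and follows essentially the paper's argument. In both index-form terms the $h$-contraction picks up a factor $e^{-2\omega}$ that cancels $\ud\vol_{\widehat h}=e^{2\omega}\ud\vol_h$, and a target homothety leaves $\nabla^{g_4}$ and the $(1,3)$ curvature unchanged while multiplying every $g_4$-pairing by $c$. Your coordinate check also establishes the tension-field identity, which the paper's proof leaves implicit: $\Delta_{\widehat h}=e^{-2\omega}\Delta_h$ holds because $\sqrt{\det\widehat h}\,\widehat h^{\al\beta}=\sqrt{\det h}\,h^{\al\beta}$ when $m=2$.
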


\begin{proof}
In each of the two trace terms in \eqref{eq:index-form-definition}, replacing
an $h$-orthonormal frame by a $\widehat h$-orthonormal frame contributes to the
factor $e^{-2\omega}$, whereas $\ud A_{\widehat h}=e^{2\omega}\ud A_h$.  This proves \eqref{eq:source-conformal-invariance}.  A constant rescaling of the target metric leaves its Levi--Civita connection unchanged and multiplies each
target inner product by $c$, which proves \eqref{eq:homothety}.
\end{proof}

Thus, from now on, it suffices below to work with $(\mathbb{S}^3,g_4)$.

\section{Tangent cones and the shifted stability inequality}
\label{sec:tangent-cones}

We derive the stability inequality satisfied by the spherical link of a
hypothetical point-defect tangent cone.

Suppose $x_0\in\Spts$.  By
\cite[Theorem~III and Proposition~4.7]{SU82}, a sequence of rescalings about
$x_0$ converges strongly in
$H^1_{\mathrm{loc}}$ to a non-constant zero-homogeneous harmonic map
\[
 u_\infty\in H^1_{\mathrm{loc}}(\R^3;\Nb),
 \quad
 u_\infty(x)=\phi\left(\f{x}{|x|}\right)
 \quad (x\in\R^3\backslash\{0\}).
\]
The final paragraph of the proof of Theorems~II and IV in \cite[p.~334]{SU82} shows, in three-dimensional source domain, that this blow-up is energy minimizing on compact subsets of $\R^3$ and has singular set exactly
$\{0\}$. Consequently, the link
\[
 \phi:\mathbb{S}^2\to\Nb
\]
is smooth and non-constant.  For $r>0$, the polar-coordinate tension equation
\[
 \tau(u_\infty)(r,\theta)
 =r^{-2}\tau_{g_{\mathrm r},g_F}(\phi)(\theta)
\]
shows that $\phi$ is harmonic.

Since $\mathbb{S}^2$ is simply connected, $\phi$ lifts through
$\PP:\mathbb{S}^3\to\Nb$:
\[
 \phi^\sharp:\mathbb{S}^2\to(\mathbb{S}^3,\PP^*g_F),
 \quad
 \PP\circ\phi^\sharp=\phi.
\]
Put $c:=8r_*^2$ and define
\begin{equation}
 \widehat\PP:=\PP\circ\Theta^{-1},
 \quad
 \widetilde\phi:=\Theta\circ\phi^\sharp.
 \label{eq:transported-cover-and-lift}
\end{equation}
By \eqref{eq:g4-pullback-isometry},
\begin{equation}
 \widehat\PP^*g_F=cg_4,
 \quad
 \widehat\PP\circ\widetilde\phi=\phi.
 \label{eq:transported-cover-isometry}
\end{equation}
Thus, $\widehat\PP:(\mathbb{S}^3,cg_4)\to(\Nb,g_F)$ is a local isometry.
Constant rescaling does not change the Levi--Civita connection, so
\begin{equation}
 \tau_{g_{\mathrm r},g_4}(\widetilde\phi)=0.
 \label{eq:lifted-sphere-harmonic}
\end{equation}

For $x\neq0$, define
\[
 \widetilde u_\infty(x)
 :=\widetilde\phi\left(\f{x}{|x|}\right).
\]
If
\[
 \widetilde W\in
 C_c^\infty(\R^3\backslash\{0\},\widetilde u_\infty^*T\mathbb{S}^3),
\]
its exponential variation projects under $\widehat\PP$ to an admissible
variation of $u_\infty$.  Local minimality, local isometry, and
\eqref{eq:homothety} give
\begin{equation}
 \IF_{\widetilde u_\infty}^{g_4}
 (\widetilde W,\widetilde W)\geq0.
 \label{eq:lifted-cone-stability}
\end{equation}

The next lemma transfers this cone stability to a shifted stability inequality
for its spherical link.

\begin{lem}
\label{lem:shifted-link-stability}
For any $ V\in C^\infty(\mathbb{S}^2,\widetilde\phi^*T\mathbb{S}^3) $, one has
\begin{equation}
 \IF_{\widetilde\phi}^{g_4}(V,V)
 +\f14\int_{\mathbb{S}^2}|V|_{g_4}^2\,\ud A_{g_{\mathrm r}}\geq0.
 \label{eq:quarter-stability}
\end{equation}
\end{lem}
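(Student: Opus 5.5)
We will test the cone stability inequality \eqref{eq:lifted-cone-stability} with separated fields of the form
\[
 \widetilde W(x):=\eta(|x|)\,V\!\left(\f{x}{|x|}\right),\qquad \eta\in C_c^\infty((0,\infty)),
\]
where $V\in C^\infty(\mathbb S^2,\widetilde\phi^*T\mathbb S^3)$ is arbitrary. This field is compactly supported in $\R^3\backslash\{0\}$, so it is admissible in \eqref{eq:lifted-cone-stability}. The first step is to write the index form of $\widetilde u_\infty$ in polar coordinates $x=r\theta$, using $\ud x=r^2\,\ud r\,\ud A_{g_{\mathrm r}}$. Since $\widetilde u_\infty$ is zero-homogeneous, $\ud\widetilde u_\infty(\partial_r)=0$. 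The tangential derivatives satisfy $\ud\widetilde u_\infty(e_\al/r)=r^{-1}\ud\widetilde\phi(e_\al)$, where $(e_1,e_2)$ is a local $g_{\mathrm r}$-orthonormal frame on $\mathbb S^2$.

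Because $\widetilde u_\infty$ does not depend on $r$, the pullback connection gives $\nabla_{\partial_r}\widetilde W=\eta'(r)V$. In the angular directions, $\nabla_{e_\al/r}\widetilde W=r^{-1}\eta\,\nabla^{\widetilde\phi^*g_4}_{e_\al}V$. The curvature term is likewise $r^{-2}\eta^2$ times the curvature integrand of $\widetilde\phi$. Collecting terms yields
\[
 \IF^{g_4}_{\widetilde u_\infty}(\widetilde W,\widetilde W)
 =\int_0^\infty\eta'(r)^2r^2\,\ud r\int_{\mathbb S^2}|V|_{g_4}^2\,\ud A_{g_{\mathrm r}}
 +\int_0^\infty\eta(r)^2\,\ud r\;\IF^{g_4}_{\widetilde\phi}(V,V)\geq0.
\]

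The remaining step, which is also the only delicate one, is the one-dimensional Hardy inequality with its sharp constant. Its content is
\[
 \inf_{\eta\in C_c^\infty((0,\infty))}\frac{\int_0^\infty\eta'^2r^2\,\ud r}{\int_0^\infty\eta^2\,\ud r}=\f14.
\]
If $\IF_{\widetilde\phi}^{g_4}(V,V)\ge0$ there is nothing to prove. Otherwise, divide the inequality above by $\int\eta^2\,\ud r$. This gives $\IF_{\widetilde\phi}^{g_4}(V,V)+\lambda\int|V|^2\ge0$ for every value $\lambda$ of the Hardy quotient. Letting $\lambda\downarrow1/4$ along a minimizing sequence then yields \eqref{eq:quarter-stability}.

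To realize the infimum, use the substitution $r=e^s$ and $\eta(r)=r^{-1/2}\zeta(\log r)$. A short computation gives
\[
 \int_0^\infty\eta'^2r^2\,\ud r=\int_{\R}\left(\zeta'^2-\zeta\zeta'+\tfrac14\zeta^2\right)\ud s,\qquad \int_0^\infty\eta^2\,\ud r=\int_{\R}\zeta^2\,\ud s.
\]
The cross term $\zeta\zeta'$ integrates to zero. The quotient therefore equals $\tfrac14+\int\zeta'^2/\int\zeta^2$, and this tends to $\tfrac14$ when $\zeta$ is taken to be a cutoff of a very wide plateau, for example $\zeta(s)=\chi(s/L)$ with $L\to\infty$.

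Two technical points must be checked along the way. First, the pullback connection on $\widetilde u_\infty^*T\mathbb S^3$ has to split as claimed. This follows from the chain rule, because $\widetilde u_\infty=\widetilde\phi\circ\pi$ with $\pi(x)=x/|x|$. Second, the smoothness of $\widetilde\phi$, which was established above from the regularity of the link, is what makes $\widetilde W$ smooth and hence admissible.
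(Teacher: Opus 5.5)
Your proposal is correct and is essentially the paper's proof. You test the cone stability with $\widetilde W=\eta(r)V(\theta)$, split the cone index form in polar coordinates into $\bigl(\int_0^\infty r^2\eta'^2\,\mathrm{d}r\bigr)\int_{\mathbb S^2}|V|_{g_4}^2\,\mathrm{d}A_{g_{\mathrm r}}+\bigl(\int_0^\infty\eta^2\,\mathrm{d}r\bigr)\IF_{\widetilde\phi}^{g_4}(V,V)$, and take $\eta=r^{-1/2}\chi(\log r/L)$, so the radial quotient equals $\tfrac14+L^{-2}\int\chi'^2/\int\chi^2\to\tfrac14$; the case distinction on the sign of $\IF_{\widetilde\phi}^{g_4}(V,V)$ is harmless but unnecessary, since dividing by $\int\eta^2\,\mathrm{d}r>0$ works either way.
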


\begin{proof}
For $ \widetilde W(r,\theta)=\eta(r)V(\theta) $ and $ \eta\in C_c^\infty(0,+\infty) $, the polar-coordinate expression for \eqref{eq:index-form-definition} is
\begin{align}
 \IF_{\widetilde u_\infty}^{g_4}(\widetilde W,\widetilde W)
 ={}&
 \left(\int_0^{+\infty}r^2|\eta'|^2\,\ud r\right)
 \int_{\mathbb{S}^2}|V|_{g_4}^2\,\ud A_{g_{\mathrm r}}+
 \left(\int_0^{+\infty}\eta^2\,\ud r\right)
 \IF_{\widetilde\phi}^{g_4}(V,V).
 \label{eq:cone-index-separation}
\end{align}
The left-hand side is nonnegative by \eqref{eq:lifted-cone-stability}.

Choose $0\neq\chi\in C_c^\infty(\R)$ and set
\[
 \eta_R(r):=r^{-\f12}\chi\left(\f{\log r}{R}\right).
\]
The change of variables $s=\f{\log r}{R}$, followed by the integration of the
cross term, gives
\[
 \f{\int_0^{+\infty}r^2|\eta_R'|^2\,\ud r}
      {\int_0^{+\infty}\eta_R^2\,\ud r}
 =\f14+\f1{R^2}
   \f{\int_\R|\chi'|^2\,\ud s}{\int_\R\chi^2\,\ud s}
 \to\f14.
\]
Substitute $\eta_R$ into \eqref{eq:cone-index-separation}, divide by
$\int_0^{+\infty}\eta_R^2\,\ud r$, and let $R\to+\infty$.  This proves
\eqref{eq:quarter-stability}.
\end{proof}

\section{Direct quantitative instability of harmonic two-spheres}
\label{sec:quantitative-instability}

The lifted link constructed in Section~\ref{sec:tangent-cones} is indicated by
$\widetilde\phi$.  In this section, we work with an arbitrary smooth
non-constant harmonic map
\[
 \phi:(\mathbb S^2,g_{\mathrm r})\to(\mathbb S^3,g_4)
\]
and prove an estimate that will be applied to $\widetilde\phi$ in
Section~\ref{sec:cone-contradiction}.  We use $\phi$ here to keep the notation
for the general argument uncluttered.

The next proposition supplies the quantitative
instability estimate that will contradict the shifted stability inequality
\eqref{eq:quarter-stability}.

\begin{prop}
\label{prop:direct-quantitative-instability}
There is a smooth section $ W\in C^\infty(\mathbb S^2,\phi^*T\mathbb S^3) $ such that
\begin{equation}
 |W|_{g_4}^2\leq \frac{13}{4}
 \quad\text{on }\mathbb S^2\label{Wg4leq}
\end{equation}
and
\begin{equation}
 \IF_\phi^{g_4}(W,W)\leq -8\pi.\label{IFWleq}
\end{equation}
\end{prop}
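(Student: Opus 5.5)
The plan is to exploit conformality of $\phi$ and build $W$ from the unit normal of the resulting branched minimal surface plus a bounded tangential correction coming from the Killing field $\xi$ of Lemma~\ref{lem:berger-hopf-field}.

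\textbf{Step 1: $\phi$ is a branched minimal immersion with a smooth normal.} Since $\mathbb S^2$ has no nonzero holomorphic quadratic differentials, the Hopf differential $\phi^*g_4(\partial_z,\partial_z)\,\ud z^2$ of the harmonic map $\phi$ vanishes. Hence $\phi$ is weakly conformal, and because it is non-constant it is a branched minimal immersion with isolated branch points $p_1,\dots,p_k$. Near a branch point of order $m$ we have $\phi_z=z^m\psi(z)$ with $\psi(0)\neq0$. Therefore the tangent plane $\operatorname{span}\{\operatorname{Re}\psi,\operatorname{Im}\psi\}$ extends smoothly across $p_i$. Using the orientation fixed in Lemma~\ref{lem:berger-hopf-field}, the $g_4$-unit normal
\[
N:=\frac{\phi_x\times_{g_4}\phi_y}{|\phi_x\times_{g_4}\phi_y|_{g_4}}
\]
then extends to a global smooth section of $\phi^*T\mathbb S^3$. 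The same holds for the tangential projection $\xi^\top$ and for the function $\nu:=g_4(\xi,N)$. By Lemma~\ref{lem:index-form-invariances} we may compute with the induced (singular) metric $\phi^*g_4$ away from the branch points. The branch points carry no area, and they enter only through the Gauss--Bonnet count $\int K\,\ud A=4\pi+2\pi\sum m_i\geq4\pi$.

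\textbf{Step 2: choice of $W$ and the pointwise bound.} I would take
\[
W:=N+\tfrac32\,JX,
\]
where $X$ is a tangential field built from $\xi$ (first choice: $\xi^\top$) and $J$ is rotation by $\pi/2$ in the tangent plane. Since $|\xi^\top|_{g_4}\leq|\xi|_{g_4}=1$ and $N\perp JX$, we get $|W|^2_{g_4}\leq1+\tfrac94=\tfrac{13}4$, which is \eqref{Wg4leq}. The coefficient $\tfrac32$ will ultimately be fixed by optimizing the quadratic expression obtained in Step 3; I expect $\tfrac32$ to be the minimizer.

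\textbf{Step 3: computing the index form.} Expand $\IF_\phi^{g_4}(W,W)$ into three pieces.
\begin{itemize}
\item The normal piece is $\int(-|A|^2-\operatorname{Ric}^{g_4}(N,N))\,\ud A$.
\item The tangential piece and the cross term are computed using the minimality of $\phi$.
\item All derivatives of $\xi$ are handled by $\nabla_X\xi=2X\times_{g_4}\xi$ from \eqref{eq:bundle-curvature-two}. This gives $\nabla_{e_\alpha}\xi^\top$ and $e_\alpha(\nu)$ in terms of $N\times\xi$, $\nu$, and the second fundamental form.
\end{itemize}
For the Berger curvature I would use the explicit formulas for $g_4$ (fiber length factor $2$ over a base of curvature $4$):
\[
\operatorname{Ric}(\xi,\xi)=8,\qquad \operatorname{Ric}|_{\xi^\perp}=-4,
\qquad\text{so}\qquad \operatorname{Ric}(N,N)=-4+12\nu^2 .
\]
The Gauss equation, $K=K^{g_4}(T\Sigma)-\tfrac12|A|^2$ with $K^{g_4}(T\Sigma)=4-12\nu^2$ (the sectional curvature of the tangent plane), lets me trade $|A|^2$ and $\nu^2$ for the Gauss curvature. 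Then, after integrating by parts the $|\nabla\nu|^2$ and $\operatorname{div}(\xi^\top)$ terms, I aim to reach
\[
\IF_\phi^{g_4}(W,W)\leq-2\int K\,\ud A\leq-8\pi .
\]
The mechanism is that the Killing field yields a Jacobi field $\xi^\perp=\nu N$, which kills the bad positive terms. The $\tfrac32$ tangential correction then cancels the cross terms $\nu\,g_4(A(\cdot,\cdot),\cdot)$, leaving only a multiple of the Gauss curvature.

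\textbf{Main obstacle.} The hard step is Step 3: the bookkeeping of the $\xi$-cross terms so that every non-sign-definite remainder either cancels or integrates to zero, leaving exactly $-2\int K\,\ud A$. A secondary point is that these integrations by parts must be justified across the branch points. This works because all the fields involved are smooth (Step 1) and the conformal factor vanishes only to finite order there. If $X=\xi^\top$ fails to close the computation, the fallback I would try next is $X=N\times_{g_4}\xi$, which is automatically tangent and of norm $\leq1$.
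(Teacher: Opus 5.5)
Your Step 1 and the pointwise bound $|W|_{g_4}^2\le\frac{13}{4}$ are fine. The gap is in Steps 2--3: with a constant coefficient, no bookkeeping can give $\IF_\phi^{g_4}(W,W)\le-2\int K\,\ud A_g$. For any smooth section $\widetilde\sigma$ of $\mathcal T_\phi$, the Ejiri--Micallef comparison used in the paper gives
\[
 \IF_\phi^{g_4}(N+\widetilde\sigma,N+\widetilde\sigma)
 =-\int_{\mathbb S^2\backslash\mathcal B}q\,\ud A_g
 +8\int_{\mathbb S^2\backslash\mathcal B}|\eta|_{g_4}^2\,\ud x\,\ud y,
 \qquad q=|B|_g^2+\operatorname{Ric}_{g_4}(N,N).
\]
Here $\eta=0$ exactly when $N+\widetilde\sigma$ is infinitesimally conformal. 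Moreover, for a minimal sphere, $\int q\,\ud A_g=8\pi+4\pi\sum_{p\in\mathcal B}m_p=2\int K\,\ud A_g$. So your target is equivalent to $\eta\equiv0$, and your $W$ does not satisfy this.

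To see why, write $T=\xi^\top$, $\gamma=\phi^*\xi^\flat$, $C=g_4(N,\xi)$. The metric variation of $N+\ud\phi(Y)$ is $\dot g=-2B+\mathcal L_Yg$. On a sphere, $B=-\tfrac32\bigl((*\gamma)\otimes\gamma+\gamma\otimes(*\gamma)\bigr)$, which is off-diagonal in the frame $(T^\sharp,JT^\sharp)$ wherever $|C|<1$. By contrast, $\mathcal L_{cJT^\sharp}g=2c\nabla(*\gamma)$ is diagonal there, because $\ud(*\gamma)=0$ and the symmetric part of $\nabla\gamma$ is $CB$. Hence for $X=\xi^\top$ the trace-free part of $\dot g$ is not identically zero, for any constant $c$. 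For your fallback $X=N\times_{g_4}\xi$, one has $JX=\pm T$ and $\dot g=-(2\pm3C)B\not\equiv0$. So in both cases $\IF_\phi^{g_4}(W,W)>-8\pi-4\pi\sum m_p$ strictly. When $\phi$ is unbranched, your $W$ therefore violates $\IF_\phi^{g_4}(W,W)\le-8\pi$, and optimizing a constant coefficient cannot repair this.

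Two ideas are missing. First, conformality (vanishing of the Hopf differential) is not enough. You need the vanishing on $\mathbb S^2$ of the Abresch--Rosenberg differential $(p-3\mathrm i\mathfrak a^2)(\ud z)^2$, where $\mathfrak a=g_4(\phi_z,\xi)$ and $p=g_4(\nabla_{\partial_z}\phi_z,N)$. It extends across branch points because $\mathfrak a,p=O(|z|^{m_p})$, and it yields the formula for $B$ above together with $\ud C=-\tfrac12(1+3C^2)(*\gamma)$. Without these identities $|B|^2$ is not a function of $C$, and your cross terms cannot close.

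Second, the correction must point along $T$ itself, with the variable coefficient $k(C)=6C/(1+3C^2)$. This choice satisfies $2kC+\tfrac13(1+3C^2)k'(C)=2$, which is exactly $\mathcal L_{kT^\sharp}g=2B$. Then $W=N+k(C)T$ is infinitesimally isometric, so $\eta=0$ and $\IF_\phi^{g_4}(W,W)=-\int q\,\ud A_g$. The number $\tfrac32$ is not a coefficient but $\max_C|k(C)|\sqrt{1-C^2}$. The bound $|W|_{g_4}^2=1+36C^2(1-C^2)/(1+3C^2)^2\le\tfrac{13}{4}$ follows from $(1-5C^2)^2\ge0$.

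Finally, $\int q\,\ud A_g\ge8\pi$ comes from $q=\tfrac12(1+3C^2)^2$, obtained by comparing the Jacobi equation for $CN$ with $\Delta_gC=-\tfrac12C(1+3C^2)^2$. The metric $\tfrac q2g$ then has curvature $1$ with the same branch orders as $\phi$, and branched Gauss--Bonnet gives the bound.
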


We first describe the branch points of $\phi$.  Their local structure will
allow the subsequent constructions to extend smoothly across them.

\begin{lem}\label{lem:finite-branch-set}
Define the branch set of $ \phi $ by 
\[
\mathcal B:=\{p\in\mathbb S^2:\ud\phi|_p=0\}.
\]
Then $\phi$ is a conformal branched minimal immersion and $\mathcal B$ is
finite.
\end{lem}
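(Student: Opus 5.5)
The plan is the classical Hopf-differential argument; the only inputs are harmonicity and the fact that the source is a sphere. In a local isothermal coordinate $z=x+\mathrm i y$ on $\mathbb S^2$, the round metric is $e^{2\omega}|\ud z|^2$, and by Lemma~\ref{lem:index-form-invariances} harmonicity of $\phi$ into $(\mathbb S^3,g_4)$ is a conformally invariant condition. I will consider the quadratic differential
\[
 \Psi:=g_4^{\mathbb C}\bigl(\phi_z,\phi_z\bigr)\,\ud z^2,
 \quad
 \phi_z:=\tfrac12(\phi_x-\mathrm i\phi_y),
\]
where $g_4^{\mathbb C}$ is the complex-bilinear extension of $g_4$. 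The tension equation reads $\nabla^{\phi^*g_4}_{\partial_{\bar z}}\phi_z=\tfrac14 e^{2\omega}\tau=0$, and the pullback connection is metric. Together these give $\partial_{\bar z}\,g_4(\phi_z,\phi_z)=2g_4(\nabla_{\partial_{\bar z}}\phi_z,\phi_z)=0$. Hence $\Psi$ is a holomorphic quadratic differential on $\mathbb S^2$. Since $\mathbb S^2$ has genus zero, there are no nonzero holomorphic quadratic differentials; one can check this directly on $\widehat{\mathbb C}$, where the coefficient must be entire and must decay like $w^{-4}$ under $z=1/w$. Thus $\Psi\equiv0$, which means $|\phi_x|_{g_4}=|\phi_y|_{g_4}$ and $g_4(\phi_x,\phi_y)=0$, so $\phi$ is weakly conformal. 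A weakly conformal harmonic map is a branched minimal immersion away from the points where $\ud\phi=0$, by the standard identification of the tension with the mean curvature for conformal immersions.

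For finiteness of $\mathcal B$, I will use the local structure of zeros of $\phi_z$. Choose a $g_4$-orthonormal frame near $\phi(p)$ and write $\phi_z$ in it as a $\mathbb C^3$-valued function $s$. The harmonic map equation then takes the form $\partial_{\bar z}s=A(z)s$, where the matrix $A$ is smooth and built from the Christoffel symbols and $\phi_{\bar z}=\overline{\phi_z}$; note that $A$ depends on $s$ and $\bar s$, but it is smooth as a function of $z$ once $\phi$ is fixed. By the Bers--Vekua (Hartman--Wintner) similarity principle, or equivalently by solving $\partial_{\bar z}G=AG$ for a smooth invertible matrix $G$ on a small disc, we get $s=G h$ with $h$ holomorphic. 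Consequently, either $s\equiv0$ near $p$, or the zero of $s$ at $p$ is isolated with a well-defined finite order $m$, and $s=(z-z_p)^m\sigma$ with $\sigma(p)\neq0$.

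To exclude the first alternative, I will use unique continuation. The set where $\phi_z$ vanishes to infinite order is both open and closed, by the same local representation. If it were nonempty, it would be all of $\mathbb S^2$, so $\ud\phi\equiv0$ and $\phi$ would be constant, contradicting non-constancy. Hence every point of $\mathcal B$ is an isolated zero of finite order, and compactness of $\mathbb S^2$ makes $\mathcal B$ finite.

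The main obstacle is the similarity-principle step, which requires a careful reduction to a linear $\bar\partial$-system with smooth coefficients. I would handle it by treating $\phi$ as given and smooth, so that the coefficient matrix $A(z)$ is smooth, and then invoking the standard Chern/Hartman--Wintner lemma. The local factorization $s=(z-z_p)^m\sigma$ is also exactly what the later constructions need in order to extend smoothly across the branch points.
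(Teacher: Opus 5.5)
Your proof is correct, but it proves from scratch what the paper only cites. The paper's proof takes conformality and the branched-minimal-immersion structure from Sacks--Uhlenbeck \cite{SU81}, whose argument is essentially your Hopf-differential one. It takes isolatedness of branch points from Gulliver--Osserman--Royden \cite{GOR73}, and then gets finiteness from closedness of $\mathcal B$ and compactness of $\mathbb S^2$.

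Your second half is precisely the construction the paper carries out later, via the Cauchy--Green fixed point, in Lemma~\ref{lem:local-branch-model}: you write $\phi_z$ in a frame as $s$, solve $\partial_{\bar z}G=AG$ with $G$ invertible, and reduce to the holomorphic $\mathbb C^3$-valued $h=G^{-1}s$. So your route front-loads that lemma and makes the finiteness statement self-contained, at the cost of length. The paper's citation route buys brevity.

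There is a small gain in your version. You exclude $\phi_z\equiv0$ on an open set by the identity theorem for $h$: the set of infinite-order zeros is open and closed, using the factorization around each point. This is cleaner than the paper's appeal to Aronszajn's unique continuation for the harmonic-map system in the proof of Lemma~\ref{lem:local-branch-model}.

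Two cosmetic corrections. First, holomorphy of $\Psi=f(z)\,\ud z^2$ at $\infty$ means $f(1/w)w^{-4}$ is bounded near $w=0$, that is, $f(z)=O(|z|^{-4})$ as $|z|\to\infty$; then $f\equiv0$ by Liouville. Second, weak conformality plus harmonicity gives only a minimal immersion off $\mathcal B$. The branched structure at points of $\mathcal B$ comes from your factorization $s=(z-z_p)^m\sigma$ with $\sigma(p)\neq0$, together with $g_4(\sigma,\sigma)=0$ inherited from conformality.
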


\begin{proof}
By \cite[Corollary~1.7]{SU81}, $\phi$ is a conformal branched minimal
immersion.  Its branch points are isolated by
\cite[Corollary~1.4]{GOR73}.  The smoothness of $\phi$ implies that
$\mathcal B$ is closed.  Since $\mathbb S^2$ is compact, $\mathcal B$ is
finite.
\end{proof}

\begin{rem}
By Lemma~\ref{lem:finite-branch-set}, at any regular point
$p\in\mathbb S^2\backslash\mathcal B$, conformality and
$\ud\phi|_p\neq0$ imply
\[
 (\phi^*g_4)_p=\lambda_p(g_{\mathrm r})_p
 \quad\text{for some }\lambda_p>0.
\]
Hence $\ud\phi|_p$ has real rank two and is an isomorphism from
$T_p\mathbb S^2$ to its image.
\end{rem}

The following local factorization makes the
vanishing order of $\ud\phi$ at a branch point explicit.

\begin{lem}\label{lem:local-branch-model}
For any $p\in\mathbb S^2$, there exist an open neighborhood
$U_p\subset\mathbb S^2$ and a holomorphic coordinate
$z=x+\mathrm i y:U_p\to \mathbb D\subset\mathbb C$ with $z(p)=0$ such that the following properties hold.
\begin{enumerate}
\item\label{assertion1} There is a real-analytic function $v$ on $U_p$, such that
\[
 g_{\mathrm r}=e^{2v}(\ud x^2+\ud y^2)=e^{2v}|\ud z|^2.
\]
\item\label{assertion2} There also exist an integer
$m_p\geq0$ and a smooth section along $\phi$ of the complexified pullback
tangent bundle, denoted by $ G\in C^\infty(
 U_p,\phi^*T\mathbb S^3\otimes_{\mathbb R}\mathbb C) $, such that
\[
 \phi_z:=\ud\phi\left(
 \frac12(\partial_x-\mathrm i\partial_y)
 \right)=z^{m_p}G,
 \quad G(z)\ne0\quad\text{for any }z\in U_p.
\]
\item\label{assertion3} On $U_p\backslash\mathcal B$,
\[
 \phi^*g_4=|z|^{2m_p}\lambda|\ud z|^2
\]
with $\lambda:=2g_4(G,\overline G)$.  Here,
$\overline G$ denotes the complex conjugation in the complexified pullback bundle,
and $g_4$ denotes the complex-bilinear extension of the real metric.  Thus, if
$G=G_1+\mathrm iG_2$ with
$G_1,G_2\in C^\infty(U_p,\phi^*T\mathbb S^3)$, then
\[
 g_4(G,\overline G)
 =|G_1|_{g_4}^2+|G_2|_{g_4}^2.
\]
In particular, $\lambda$ is smooth and positive on $U_p$.
\end{enumerate}
\end{lem}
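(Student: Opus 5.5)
The plan is to work in isothermal coordinates and use the holomorphicity of $\phi_z$ in the natural complex structure of the complexified pullback bundle. The underlying fact is a Koszul–Malgrange-type statement: the $(0,1)$-part $\nabla_{\bar z}$ of the pullback connection $\nabla^{\phi^*g_4}$ makes $\phi^*T\mathbb S^3\otimes_{\mathbb R}\mathbb C$ a holomorphic vector bundle over $U_p$.

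\emph{Step 1: coordinates and holomorphicity.} Since $\mathbb S^2$ is a Riemann surface and $g_{\mathrm r}$ is real-analytic, we fix near $p$ a holomorphic coordinate $z$ with $z(p)=0$, mapping a neighborhood $U_p$ onto the disc. In such a coordinate $g_{\mathrm r}=e^{2v}|\ud z|^2$ with $v$ real-analytic, for instance via stereographic projection, where $e^{2v}=4/(1+|z|^2)^2$. This gives assertion \eqref{assertion1}. Because $\tau$ is conformally invariant by \eqref{eq:source-conformal-invariance}, harmonicity in these coordinates reads
\[
\nabla^{\phi^*g_4}_{\partial_{\bar z}}\phi_z=0 .
\]
By Lemma~\ref{lem:finite-branch-set}, $\phi$ is conformal, i.e.\ $g_4(\phi_z,\phi_z)=0$. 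Conformality is only used later.

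\emph{Step 2: factorization.} Trivialize the complexified bundle on $U_p$ by a frame that is holomorphic for $\nabla_{\bar z}$. Such a frame exists because a $\bar\partial$-operator on a disc admits local holomorphic frames: one solves $\bar\partial s+As=0$, which is possible by the Koszul–Malgrange theorem or by solving a $\bar\partial$-equation with small data after shrinking the disc. In this frame $\phi_z$ becomes a $\mathbb C^3$-valued holomorphic function $h(z)$.
\begin{itemize}
\item If $h\not\equiv0$, write $h=z^{m_p}\tilde h$ with $\tilde h(0)\neq0$, where $m_p$ is the minimal vanishing order among the components. Set $G:=$ the section with components $\tilde h$, and shrink $U_p$ so that $G\neq0$ throughout.
\item The case $h\equiv0$ near $p$ is excluded. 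If it held, $\ud\phi$ would vanish on an open set. Unique continuation for harmonic maps (as already used via \cite{GOR73} in Lemma~\ref{lem:finite-branch-set}, or by real-analyticity of $\phi$, since $g_4$ is analytic) would then force $\phi$ to be constant, contradicting the assumption that $\phi$ is non-constant.
\end{itemize}
Smoothness of $G$ follows from smoothness of the frame. This gives assertion \eqref{assertion2}, and $m_p=0$ exactly when $p\notin\mathcal B$.

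\emph{Step 3: the pullback metric.} We have $\phi_x=2\operatorname{Re}\phi_z$ and $\phi_y=-2\operatorname{Im}\phi_z$. Conformality, $g_4(\phi_z,\phi_z)=0$, gives $|\phi_x|^2=|\phi_y|^2$ and $g_4(\phi_x,\phi_y)=0$, so
\[
\phi^*g_4=2g_4(\phi_z,\overline{\phi_z})\,|\ud z|^2=|z|^{2m_p}\,2g_4(G,\overline G)\,|\ud z|^2 .
\]
Write $G=G_1+\mathrm iG_2$. Then $g_4(G,\overline G)=|G_1|^2+|G_2|^2>0$ because $G\neq0$, so $\lambda$ is smooth and positive. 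This gives assertion \eqref{assertion3}.

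\emph{Main obstacle.} The delicate point is Step 2: the existence of a local holomorphic frame for the pullback connection's $\bar\partial$-operator, together with excluding $h\equiv0$. The fallback for the first part is to cite the standard Hartman–Wintner / Chern-type argument that $|\phi_z|$ has isolated zeros with a well-defined finite order. That argument gives $\phi_z=z^{m}G$ directly from the equation $\partial_{\bar z}\phi_z=O(|\phi_z|)$.
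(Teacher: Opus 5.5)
Your proposal is correct and follows essentially the paper's route. The paper's gauge matrix $P$, built by a Cauchy--Green contraction argument to solve $\partial_{\bar z}P+\Gamma_{\bar z}P=0$, is exactly the local holomorphic frame for $\nabla_{\bar z}$ that you obtain from Koszul--Malgrange (or from your small-data $\bar\partial$ solve), and the factorization $\phi_z=z^{m_p}G$ and the conformality computation of $\lambda$ then proceed identically; the only difference is cosmetic, namely that you exclude $\phi_z\equiv0$ via real-analyticity of $\phi$ or finiteness of $\mathcal B$ from Lemma~\ref{lem:finite-branch-set}, where the paper invokes Aronszajn's unique continuation.
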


\begin{proof}
For any $p\in\mathbb S^2$, choose an open neighborhood
$U_p\subset\mathbb S^2$ and a holomorphic coordinate chart
\begin{equation}
 z=x+\mathrm i y:U_p\to \mathbb D\subset\mathbb C,
 \quad z(p)=0,
 \quad g_{\mathrm r}=e^{2v}|\ud z|^2,\label{conformalChart}
\end{equation}
where $v$ is real analytic. We use the same symbols for maps and sections over $U_p$ and for their coordinate representations on $\mathbb D$. Put
\[
 \partial_z:=\f12(\partial_x-\mathrm i\partial_y),
 \quad
 \partial_{\ol z}:=\f12(\partial_x+\mathrm i\partial_y),
 \quad
 \phi_z:=\ud\phi(\partial_z).
\]
The pullback bundle $\phi^*T\mathbb S^3$ has real rank $3$, and its fiberwise
complexification satisfies
\[
 \rank_{\mathbb C}\left(
 \phi^*T\mathbb S^3\otimes_{\mathbb R}\mathbb C
 \right)=3.
\]

Choose a smooth local complex frame $(E_1,E_2,E_3)$ of
$\phi^*T\mathbb S^3\otimes_{\mathbb R}\mathbb C$ over $U_p$ and write
\begin{equation}
 \phi_z=\sum_{\al=1}^3u^\al E_\al,
 \quad
 \mathbf u:=(u^1,u^2,u^3)^{\T}.\label{phizrepresentation}
\end{equation}
Let the smooth matrix-valued function
\[
 \Gamma_{\ol z}
 :=\bigl((\Gamma_{\ol z})_\beta^\al\bigr)_{\al,\beta=1}^3
 \in C^\infty(U_p,M_3(\mathbb C))
\]
be the connection matrix determined by
\begin{equation}
 \nabla^{\phi^*g_4}_{\partial_{\ol z}}E_\beta
 =\sum_{\al=1}^3(\Gamma_{\ol z})_\beta^\al E_\al.\label{nablaEbeta}
\end{equation}
Since $\phi$ is a harmonic map and $z$ is conformal, Definition
\ref{def:tension-field} gives

\[
 \tau_{g_{\mathrm r},g_4}(\phi)
 =e^{-2v}\left(
   \nabla^{\phi^*g_4}_{\partial_x}\phi_x
   +\nabla^{\phi^*g_4}_{\partial_y}\phi_y
  \right)
 =4e^{-2v}\nabla^{\phi^*g_4}_{\partial_{\ol z}}\phi_z.
\]
Indeed, torsion-free identity
$\nabla^{\phi^*g_4}_{\partial_x}\phi_y
=\nabla^{\phi^*g_4}_{\partial_y}\phi_x$ cancels the imaginary part of
$4\nabla^{\phi^*g_4}_{\partial_{\ol z}}\phi_z$. Therefore,
\begin{equation}
 0=\frac14e^{2v}\tau_{g_{\mathrm r},g_4}(\phi)
  =\nabla^{\phi^*g_4}_{\partial_{\ol z}}\phi_z.
 \label{eq:harmonic-map-complex-equation}
\end{equation}
This, together with \eqref{nablaEbeta} and the representation of $ \phi_z $ in \eqref{phizrepresentation}, implies
\begin{equation}
 \partial_{\ol z}\mathbf u+\Gamma_{\ol z}\mathbf u=0.
 \label{eq:local-harmonic-coefficient-equation}
\end{equation}

Choose $r>0$ such that the closed disk
\[
\overline{\mathbb D}_r:=\{|z|\leq r\}
\subset \mathbb D,
\]
where $\mathbb D$ is the coordinate disk in
\eqref{conformalChart}, and $\mathbb D_r:=\{|z|<r\}$.  Equip
$M_3(\mathbb C)$ with the standard operator norm and
$C^0(\overline{\mathbb D}_r,M_3(\mathbb C))$ with the
corresponding supremum norm.  For
$F\in C^0(\overline{\mathbb D}_r,M_3(\mathbb C))$, define the normalized
Cauchy--Green transform by
\begin{equation}
 (\mathcal C_0F)(z)
 :=\frac1\pi\int_{\mathbb D_r}
 \left(\frac1{z-\zeta}+\frac1\zeta\right)
 F(\zeta)\,\ud\xi\,\ud\eta,
 \quad \zeta=\xi+\mathrm i\eta.\label{C0Frepresentation}
\end{equation}
The kernel in \eqref{C0Frepresentation} has only integrable
$\f{1}{|\cdot|}$-singularities. The
splitting of $\mathbb D_r$ into small disks about the
singularities and their complement shows that $\mathcal C_0F$ is continuous
on $\overline{\mathbb D}_r$. Thus, $\mathcal C_0$ maps
$C^0(\overline{\mathbb D}_r,M_3(\mathbb C))$ to itself. Direct computations imply the fundamental-solution
identity:
\begin{equation}
 \partial_{\ol z}\left(\frac1{\pi z}\right)=\delta_0,\label{partialzbar}
\end{equation}
where $ \delta_0 $ denotes the Dirac measure whose support is $ 0 $. Then we have
\begin{equation}
 \partial_{\ol z}(\mathcal C_0F)=F,
 \quad
 (\mathcal C_0F)(0)=0,                         \label{eq:cauchy-green-identities}
\end{equation}
where the first identity holds in the sense of distributions on
$\mathbb D_r$ and follows from \eqref{partialzbar}, whereas the direct substitution of $z=0$ in
\eqref{C0Frepresentation} gives the second identity of \eqref{eq:cauchy-green-identities}. The scalar kernel satisfies
\[
 \left|\frac1{z-\zeta}+\frac1\zeta\right|
 =\frac{|z|}{|z-\zeta||\zeta|}.
\]
For $z\ne0$, write $z=rw$ and $\zeta=r\eta$.  Then
\begin{align*}
 \bigl|(\mathcal C_0F)(z)\bigr|
 &\leq\frac{\|F\|_{C^0(\overline{\mathbb D}_r)}}\pi
   \int_{\mathbb D_r}\frac{|z|}{|z-\zeta||\zeta|}\,\ud\xi\,\ud\eta\\
 &=\frac{r\|F\|_{C^0(\overline{\mathbb D}_r)}}\pi
   |w|\int_{\mathbb D_1}\frac{\ud A(\eta)}{|w-\eta||\eta|}
 \leq c_0r\|F\|_{C^0(\overline{\mathbb D}_r)}.
\end{align*}
Here $c_0$ is independent of $r$: the quantity
\[
 \sup_{0<|w|\leq1}|w|
 \int_{\mathbb D_1}\frac{\ud A(\eta)}{|w-\eta||\eta|}
\]
is finite, as follows by splitting $\mathbb D_1$ into disks 
$\{|\eta|<\frac{|w|}{2}\}$ and $\{|\eta-w|<\frac{|w|}{2}\}$ and their complement. Indeed,
if $a:=|w|\in(0,1]$, the contributions of the two disks are at most
$2\pi a$ each. In complement,
\[
 \frac a{|\eta||w-\eta|}
 \leq\frac a2\left(\frac1{|\eta|^2}+\frac1{|w-\eta|^2}\right),
\]
whose integral is at most
$\pi a\bigl(\log\frac{2}{a}+\log\frac{4}{a}\bigr)$.  These bounds are uniform for
$0<a\leq1$.  For $z=0$, the second identity in
\eqref{eq:cauchy-green-identities} gives
$|(\mathcal C_0F)(0)|=0\leq c_0r\|F\|_{C^0(\overline{\mathbb D}_r)}$.
Thus, the estimate holds for any
$z\in\overline{\mathbb D}_r$, and hence
\begin{equation}
 \|\mathcal C_0F\|_{C^0(\overline{\mathbb D}_r)}
 \leq c_0r\|F\|_{C^0(\overline{\mathbb D}_r)}.
 \label{eq:cauchy-green-estimate}
\end{equation}
After decreasing $r$ so that
$\rho:=c_0r\|\Gamma_{\ol z}\|_{C^0(\overline{\mathbb D}_r)}<\frac12$, define the
self-map
\[
 \mathcal P:C^0(\overline{\mathbb D}_r,M_3(\mathbb C))
 \to C^0(\overline{\mathbb D}_r,M_3(\mathbb C)),
 \quad
 \mathcal P(Q):=\mathbf I_3-\mathcal C_0(\Gamma_{\ol z}Q).
\]
By \eqref{eq:cauchy-green-estimate}, it satisfies
\[
 \|\mathcal P(Q_1)-\mathcal P(Q_2)\|_{C^0(\overline{\mathbb D}_r)}
 \leq \rho\|Q_1-Q_2\|_{C^0(\overline{\mathbb D}_r)}.
\]
The space $C^0(\overline{\mathbb D}_r,M_3(\mathbb C))$, equipped with the supremum
norm, is a Banach space.  Since $\rho<1$, the Banach fixed-point theorem gives
a unique fixed point
$P\in C^0(\overline{\mathbb D}_r,M_3(\mathbb C))$.  By
\eqref{eq:cauchy-green-identities}, it satisfies $P(0)=\mathbf I_3$ and
\begin{equation}
\partial_{\ol z}P+\Gamma_{\ol z}P=0.\label{eq:local-gauge-equation}
\end{equation}
Moreover, the fixed-point identity
\[
\mathbf{I}_3-\mathcal{C}_0(\Gamma_{\ol{z}}P)=P
\]
and \eqref{eq:cauchy-green-estimate}
give $\|P\|_{C^0(\overline{\mathbb D}_r)}
\leq1+\rho\|P\|_{C^0(\overline{\mathbb D}_r)}$
and hence
\[
 \|P-\mathbf I_3\|_{C^0(\overline{\mathbb D}_r)}
 \leq\frac \rho{1-\rho}<1.
\]
Then $P(z)$ is invertible for any
$z\in\overline{\mathbb D}_r$. Note that \eqref{eq:local-gauge-equation} holds distributionally with $\Gamma_{\ol z}P\in C^0$. The interior $L^p$
estimates for $\partial_{\ol z}$ give
$P\in W_{\mathrm{loc}}^{1,p}(\mathbb D_r)$ for any $1<p<\infty$. Bootstrapping the equation $\partial_{\ol z}P=-\Gamma_{\ol z}P$, using the smoothness of $\Gamma_{\ol z}$, gives
$P\in C^\infty(\mathbb D_r)$. Changing $U_p$ by
$z^{-1}(\mathbb D_r)$, we now regard $P$ as a smooth invertible matrix on
$U_p$.  Finally, \eqref{eq:local-harmonic-coefficient-equation} and
\eqref{eq:local-gauge-equation} give
\begin{equation}
\begin{aligned}
 \partial_{\ol z}(P^{-1}\mathbf u)
 &=-P^{-1}(\partial_{\ol z}P)P^{-1}\mathbf u
   +P^{-1}\partial_{\ol z}\mathbf u\\
 &=P^{-1}\Gamma_{\ol z}\mathbf u
   -P^{-1}\Gamma_{\ol z}\mathbf u=0.
\end{aligned}\label{eq:gauged-coefficients-holomorphic}
\end{equation}
Thus, $P^{-1}\mathbf u$ is holomorphic. 

Note that $ P^{-1}\mathbf{u} $ is not identically zero. Indeed, otherwise by \eqref{phizrepresentation}, the invertibility of $P$ would give $\ud\phi=0$ on the nonempty open set
$z^{-1}(\mathbb D_r)$.
Let $q_0\in\mathbb S^3$ be the constant value there. Define
\[
 \mathcal O:=\{x\in\mathbb S^2:\phi\equiv q_0
 \text{ on a neighborhood of }x\}.
\]
This set is nonempty and open.  To see that it is closed, suppose that
$x_j\in\mathcal O$ and $x_j\to x$.  Then the smoothness ensures that $\phi(x)=q_0$.  Choose local source coordinates near $x$ and target coordinates centered at $q_0$, and set $v^\al:=\phi^\al-(q_0)^\al$ and $\mathbf v:=(v^1,v^2,v^3)$.  Since $\mathbf v$ vanishes near any $x_j$, the smoothness of $\phi$ shows that all derivatives of $\mathbf v$ vanish at $x$.  The harmonic-map equation is
\[
 Av^\al
 =-g_{\mathrm r}^{ij}\Gamma^\al_{\beta\gamma}(\phi)
   \partial_i v^\beta\partial_j v^\gamma,
 \quad A:=\Delta_{g_{\mathrm r}},
\]
so, the same scalar second-order elliptic operator $A$ acts on any
component.  Since $\phi$ is smooth, after shrinking the coordinate
neighborhood if necessary, this system satisfies
\[
 |Av^\al|^2\leq C\bigl(|\nabla\mathbf v|^2+|\mathbf v|^2\bigr).
\]
The system version of Aronszajn's unique-continuation theorem
\cite[Remark~3, p.~248]{Aro57} gives $\mathbf v=0$ near $x$.  Hence
$x\in\mathcal O$. Thus, $\mathcal O$ is also closed.  Since $\mathbb S^2$
is connected, $\mathcal O=\mathbb S^2$, contradicting the assumption that
$\phi$ is non-constant. 

Write
\[
 P^{-1}\mathbf u=(f^1,f^2,f^3)^\T.
\]
By \eqref{eq:gauged-coefficients-holomorphic}, each $f^\al$ is holomorphic.
Since this vector is not identically zero, the integer
\[
 m_p:=\min\bigl\{\operatorname{ord}_0(f^\al):
                    f^\al\not\equiv0\bigr\}\geq0
\]
is finite. Thus, $z^{m_p}$ is the largest power of $z$ dividing all three
components: there are holomorphic functions $h_0^\al$ such that
\[
 f^\al=z^{m_p}h_0^\al,
 \quad
 \mathbf h_0:=(h_0^1,h_0^2,h_0^3)^\T.
\]
The minimality of $m_p$ gives $\mathbf h_0(0)\ne0$, and hence
\[
 P^{-1}\mathbf u=z^{m_p}\mathbf h_0.
\]
Define the complexified tangent-vector field
\[
 G:=\sum_{\al=1}^3(P\mathbf h_0)^\al E_\al
 \in C^\infty\left(
 U_p,\phi^*T\mathbb S^3\otimes_{\mathbb R}\mathbb C
 \right).
\]
Then
\begin{equation}
 \phi_z=z^{m_p}G,
 \quad G(0)\ne0.                                      \label{eq:local-branch-model}
\end{equation}
Here, $G$ is smooth and $m_p\geq0$ is the branch order.
More precisely, \eqref{eq:local-branch-model}
shows that $m_p$ is the common vanishing order of the components of
$\phi_z$.  In the convention of \cite[Definition~1.2]{GOR73}, the first
nonzero homogeneous term of $\phi$ has the degree $m_p+1$, and therefore the branch order
is $(m_p+1)-1=m_p$.  In particular, $m_p=0$ exactly when
$\ud\phi|_p\neq0$.

Since $G(0)\ne0$ and $G$ is smooth, after shrinking $U_p$ we may assume
that $G$ is nowhere zero on $U_p$.  In the complex coordinate $z$, the
conformality in Lemma~\ref{lem:finite-branch-set} gives
\[
 g_4(\phi_z,\phi_z)=0,
 \quad
 \phi^*g_4=2g_4(\phi_z,\overline{\phi_z})|\ud z|^2.
\]
Substituting \eqref{eq:local-branch-model} gives the second identity below.
For the first, we cancel $z^{2m_p}$ when $z\ne0$:
\[
 g_4(G,G)=0,
 \quad
 \phi^*g_4=2|z|^{2m_p}g_4(G,\overline G)|\ud z|^2.
\]
The first identity extends to $z=0$ by the smoothness of $G$.  Moreover, the
complex-bilinear extension of the positive-definite metric $g_4$ satisfies
$g_4(G,\overline G)>0$ because $G$ is nowhere zero.  Consequently, on the
regular set,
\begin{equation}
 \phi^*g_4=e^{2u}|\ud z|^2,
 \quad
 e^{2u}=|z|^{2m_p}\lambda,
 \quad
 \lambda:=2g_4(G,\overline G)>0.                    \label{eq:branched-induced-metric}
\end{equation}
Here, $\lambda$ is smooth and positive $z=0$.

The conformal chart \eqref{conformalChart} proves
assertion~\ref{assertion1}, the factorization \eqref{eq:local-branch-model} proves
assertion~\ref{assertion2}, and \eqref{eq:branched-induced-metric} proves assertion~\ref{assertion3}.
\end{proof}

The local branch model also determines a smooth
limiting tangent plane and normal line at any branch point.

\begin{lem}\label{lem:ramified-tangent-normal-bundles}
For any $q\in\mathbb S^2\backslash\mathcal B$, define the tangent plane of
$\phi$ at $q$ by
\[
 (\mathcal T_\phi)_q
 :=\ud\phi_q(T_q\mathbb S^2)
 \subset T_{\phi(q)}\mathbb S^3,
\]
oriented so that $\ud\phi_q:T_q\mathbb S^2\to(\mathcal T_\phi)_q$ preserves
orientation. Then the following properties hold.
\begin{enumerate}
\item These oriented planes extend smoothly through the branch points
to a rank-two sub-bundle $\mathcal T_\phi\subset\phi^*T\mathbb S^3$, called the ramified tangent bundle.
\item The oriented unit normals of $ \mathcal{T}_{\phi} $ extend to a smooth vector field along $\phi$,
\[
 N\in C^\infty(\mathbb S^2,\phi^*T\mathbb S^3),
 \quad
 N(q)\in T_{\phi(q)}\mathbb S^3,
 \quad
 |N(q)|_{g_4}=1,
 \quad q\in\mathbb S^2,
\]
such that
\[
 (\mathcal T_\phi)_q
 =\{V\in T_{\phi(q)}\mathbb S^3:g_4(V,N(q))=0\},
 \quad q\in\mathbb S^2.
\]
\item The orthogonal complement $ \nu_\phi:=\mathcal T_\phi^{\perp_{g_4}} $ is a smooth normal line bundle that satisfies
\[
 (\nu_\phi)_q
 =\operatorname{span}_{\mathbb R}\{N(q)\}
 \subset T_{\phi(q)}\mathbb S^3,
 \quad q\in\mathbb S^2.
\]
\end{enumerate}
\end{lem}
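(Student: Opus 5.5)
The plan is to use the local branch model of Lemma~\ref{lem:local-branch-model} to write the tangent plane in terms of the smooth nonvanishing section $G$ rather than $\ud\phi$. Fix $p\in\mathbb S^2$ and the chart $z:U_p\to\mathbb D$ given there, with $\phi_z=z^{m_p}G$ and $G=G_1+\mathrm iG_2$, where $G_1,G_2\in C^\infty(U_p,\phi^*T\mathbb S^3)$. Since $\phi_x=\phi_z+\overline{\phi_z}$ and $\phi_y=\mathrm i(\phi_z-\overline{\phi_z})$, we get
\[
\phi_x=2\operatorname{Re}(z^{m_p}G),\qquad \phi_y=-2\operatorname{Im}(z^{m_p}G).
\]
The identities $g_4(G,G)=0$ and $g_4(G,\overline G)=\lambda/2>0$ from Lemma~\ref{lem:local-branch-model} say precisely that $|G_1|_{g_4}=|G_2|_{g_4}=\tfrac12\sqrt{\lambda/2}>0$ and $g_4(G_1,G_2)=0$. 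Hence $G_1,G_2$ are everywhere linearly independent on $U_p$, including at $z=0$.

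On $U_p\backslash\mathcal B$, that is for $z\neq0$, write $z^{m_p}=\rho e^{\mathrm i\theta}$ with $\rho>0$. Then $\operatorname{Re}(z^{m_p}G)$ and $\operatorname{Im}(z^{m_p}G)$ are obtained from $(G_1,G_2)$ by multiplying by $\rho$ and rotating by the angle $\theta$. Therefore
\[
\operatorname{span}\{\phi_x,\phi_y\}=\operatorname{span}\{G_1,G_2\},
\]
and the orientation also matches: $\phi_x\wedge\phi_y=-4\,\operatorname{Re}(z^{m_p}G)\wedge\operatorname{Im}(z^{m_p}G)=-4\rho^2 G_1\wedge G_2$. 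So the oriented plane $(\mathcal T_\phi)_q$ equals the span of the ordered pair $(G_1,-G_2)$. This pair is smooth and linearly independent on all of $U_p$, so it defines the smooth extension on $U_p$. Well-definedness across charts is automatic: $\mathcal B$ is finite by Lemma~\ref{lem:finite-branch-set}, so $U_p\backslash\mathcal B$ is dense in $U_p$. Two continuous extensions of the same plane field, viewed as maps into the oriented Grassmannian bundle, therefore agree on overlaps. This proves the first assertion.

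For the second assertion, define locally
\[
N:=\frac{G_1\times_{g_4}(-G_2)}{|G_1\times_{g_4}G_2|_{g_4}},
\]
using the metric cross product of Lemma~\ref{lem:berger-hopf-field} and the fixed orientation of $(\mathbb S^3,g_4)$. This is smooth because the denominator is positive, it has unit length, and it is $g_4$-orthogonal to $\mathcal T_\phi$. It is the positively oriented unit normal of the oriented plane $\mathcal T_\phi$, which makes it intrinsic: independent of the chart and of the choice of $G$. Hence the local definitions agree on overlaps (again by density of the regular set) and glue to a global $N\in C^\infty(\mathbb S^2,\phi^*T\mathbb S^3)$. The characterization $(\mathcal T_\phi)_q=N(q)^{\perp_{g_4}}$ holds by construction. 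The third assertion follows immediately, since $\nu_\phi=\mathcal T_\phi^{\perp_{g_4}}$ is then the line bundle spanned by the smooth nowhere-zero section $N$.

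The main obstacle is the orientation bookkeeping at branch points. A sign mismatch between $\operatorname{span}(G_1,-G_2)$ and $\ud\phi$ would make $N$ flip across $\mathcal B$. The computation of $\phi_x\wedge\phi_y$ above is what rules this out, since the factor $\rho^2$ is positive for every $m_p$.
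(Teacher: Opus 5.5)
Your proposal is correct and follows essentially the same route as the paper, which also uses the frame $(\operatorname{Re}G,-\operatorname{Im}G)$ from the branch model, checks that multiplication by $z^{m_p}$ preserves the oriented plane, defines $N$ as the normalized cross product $-\mathrm iG\times_{g_4}\overline G=-2G_1\times_{g_4}G_2$ (the same direction as your $G_1\times_{g_4}(-G_2)$), and glues the local fields using density of the regular set. The one slip is a harmless arithmetic error: $g_4(G,\overline G)=\lambda/2$ gives $|G_1|_{g_4}^2=|G_2|_{g_4}^2=\lambda/4$, not $\lambda/8$, and only the positivity of these norms is used.
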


\begin{proof}

Choose $p\in\mathcal B$ and a branch chart
$z:U_p\to \mathbb D$ and the field
$G$ supplied by Lemma \ref{lem:local-branch-model}. Thus, $z(p)=0$
and $\phi_z=z^{m_p}G$.  After shrinking $U_p$, assume that $G$ is nowhere
 zero. On $\mathbb D\backslash\{0\}$, the conformality in
Lemma~\ref{lem:finite-branch-set} and $\phi_z=z^{m_p}G$ give
\[
z^{2m_p}g_4(G,G)=g_4(\phi_z,\phi_z)=0,
\]
 so $g_4(G,G)=0$ there.  This identity
extends to $z=0$ through continuity.  

The positive definiteness of $g_4$ also gives
$g_4(G,\overline G)>0$ on $\mathbb D$.  For
$z\in\mathbb D$, set
\begin{equation}
 \mathcal E_1(z):=\operatorname{Re}G(z),
 \quad
 \mathcal E_2(z):=-\operatorname{Im}G(z),
 \quad
 \mathcal T(z):=\operatorname{span}_{\mathbb R}
 \{\mathcal E_1(z),\mathcal E_2(z)\}
 \subset T_{\phi(z)}\mathbb S^3.                    \label{eq:extended-tangent-plane-local}
\end{equation}
These identities give, for any $z\in\mathbb D$,
\[
 g_4(\mathcal E_1(z),\mathcal E_2(z))=0,
 \quad
 |\mathcal E_1(z)|_{g_4}^2=|\mathcal E_2(z)|_{g_4}^2
 =\frac{1}{2}g_4(G(z),\overline{G(z)})>0.
\]
Thus, $\mathcal T(z)$ is a smooth plane field, oriented by the ordered frame
$(\mathcal E_1(z),\mathcal E_2(z))$.  For $z\ne0$, Lemma
\ref{lem:local-branch-model} gives
\[
 \left(\frac{1}{2}\phi_x(z),\frac{1}{2}\phi_y(z)\right)
 =\left(\operatorname{Re}(z^{m_p}G(z)),
        -\operatorname{Im}(z^{m_p}G(z))\right),
 \quad
 \mathcal T(z)=\ud\phi|_z(T_zD),
\]
Multiplication by the nonzero complex number $z^{m_p}$ acts on the underlying
real two-plane with determinant $|z|^{2m_p}>0$, so it preserves the
orientation as well as the plane.  Hence the limiting tangent plane asserted by
\cite[Lemma~3.1]{GOR73} is explicitly
\[
 \lim_{z\to0,\ z\ne0}\ud\phi|_z(T_z\mathbb D)
 =\mathcal T(0)
 =\operatorname{span}_{\mathbb R}
 \{\mathcal E_1(0),\mathcal E_2(0)\}.
\]
We use the metric cross product $\times_{g_4}$ defined in
Lemma~\ref{lem:berger-hopf-field} and extend it complex-bilinearly to
complexified tangent vectors.  Since
\[
 -\mathrm iG(z)\mathbin{\times_{g_4}}\overline{G(z)}
 =2\mathcal E_1(z)\mathbin{\times_{g_4}}\mathcal E_2(z)\ne0,
\]
the oriented unit normal of $\mathcal T(z)$ is
\begin{equation}
 N(z):=
 \frac{-\mathrm iG(z)\mathbin{\times_{g_4}}\overline{G(z)}}
 {|-\mathrm iG(z)\mathbin{\times_{g_4}}\overline{G(z)}|_{g_4}}
 \in T_{\phi(z)}\mathbb S^3.                       \label{defNz}
\end{equation}
Thus, the unit normal on $\mathbb S^2\backslash\mathcal B$ extends smoothly through any branch
point. If $N_\alpha$ and $N_\beta$ are the
extensions obtained from two overlapping branch charts, then
\[
 N_\alpha=N_\beta
 \quad\text{on }(U_\alpha\cap U_\beta)\backslash\mathcal B,
 \quad
 \overline{(U_\alpha\cap U_\beta)\backslash\mathcal B}
 =\overline{U_\alpha\cap U_\beta}.
\]
Their continuity therefore gives $N_\alpha=N_\beta$ on the whole overlap.
Thus, the local fields define a global unit normal
\[
 N\in C^\infty(\mathbb S^2,\phi^*T\mathbb S^3),
 \quad
 N(x)\in T_{\phi(x)}\mathbb S^3
 \quad (x\in\mathbb S^2).
\]

For $x\in\mathbb S^2$, define
\[
 (\mathcal T_\phi)_x:=
 \begin{cases}
  \ud\phi|_x(T_x\mathbb S^2),&x\in\mathbb S^2\backslash\mathcal B,\\
  \mathcal T(0),&x=p\in\mathcal B,
 \end{cases}
\]
where, in the second line, $z(p)=0$ and $\mathcal T(0)$ is the plane defined
in \eqref{eq:extended-tangent-plane-local}.  That identity shows that
$\mathcal T(0)$ is the smooth limiting plane of
$\ud\phi(T\mathbb S^2)$ at $p$. In particular, it remains two-dimensional
although $\ud\phi|_p(T_p\mathbb S^2)=\{0\}$.  Since $\mathcal T(z)=N(z)^{\perp_{g_4}}$, the overlap agreement established after \eqref{defNz} shows that these local plane fields agree on overlaps.  This smooth
rank-two sub-bundle $\mathcal T_\phi\subset\phi^*T\mathbb S^3$ is called the
ramified tangent bundle.

Define the normal line bundle fiberwise by
\[
 (\nu_\phi)_x:=(\mathcal T_\phi)_x^{\perp_{g_4}}
 =\operatorname{span}_{\mathbb R}\{N(x)\}.
\]
The local construction \eqref{eq:extended-tangent-plane-local} and the smooth
normal formula \eqref{defNz}
therefore give the smooth orthogonal splitting
\[
 \phi^*T\mathbb S^3=\mathcal T_\phi\oplus\nu_\phi.
\]
This is an orthogonal direct sum of real vector bundles.  Moreover, $N$ is a
 smooth global unit section of $\nu_\phi$.
\end{proof}

We next derive the surface identities that control
the second fundamental form and the angle between the normal and the Hopf
field.

\begin{lem}\label{lem:berger-surface-identities}
Let $\xi=\frac12V_{\mathrm H}$ be given by \eqref{eq:g4-unit-Hopf-field}, and
let $N$ be the global unit normal supplied by
Lemma \ref{lem:ramified-tangent-normal-bundles}. Set
\begin{equation}
 C:=g_4(N,\xi\circ\phi)\in C^\infty(\mathbb S^2),
 \quad
 \gamma:=\phi^*(\xi^\flat)
 \in C^\infty(\mathbb S^2,T^*\mathbb S^2).                    \label{defCgamma}
\end{equation}
Let $B$ be the scalar second fundamental form on
$\mathbb S^2\backslash\mathcal B$. Then the following properties hold.
\begin{enumerate}
\item $ B\in C^\infty\left(
 \mathbb S^2\backslash\mathcal B,
 T^*(\mathbb S^2\backslash\mathcal B)
 \otimes T^*(\mathbb S^2\backslash\mathcal B)\right) $ and for the tangent vector fields $X,Z$ on the regular set, it is defined by
\begin{equation}
 B(X,Z):=g_4\left(
\nabla_X^{\phi^*g_4}\bigl(\ud\phi(Z)\bigr),N
 \right).                                                       \label{defB}
\end{equation}
\item If $*$ is the Hodge star operator of the oriented induced conformal structure, then, on the regular set $ \mathbb{S}^2\backslash\mathcal{B} $,
\[
 B=-\frac32\bigl((*\gamma)\otimes\gamma
                       +\gamma\otimes(*\gamma)\bigr),
 \quad
 \ud C=-\frac{1+3C^2}{2}(*\gamma).
\]
\end{enumerate}
\end{lem}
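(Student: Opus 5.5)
The plan is to compute the first derivative of $C$ directly from the Killing-field identity \eqref{eq:bundle-curvature-two}, and to obtain the algebraic formula for $B$ from the vanishing of an Abresch--Rosenberg type holomorphic quadratic differential on $\mathbb S^2$. The first assertion is routine: on $\mathbb S^2\backslash\mathcal B$, $\phi$ is a conformal immersion and $N$ is smooth by Lemma~\ref{lem:ramified-tangent-normal-bundles}, so \eqref{defB} defines a smooth symmetric tensor. Symmetry follows from torsion-freeness of $\nabla^{\phi^*g_4}$. Harmonicity \eqref{eq:harmonic-map-complex-equation} gives $\operatorname{tr}B=0$, i.e.\ $B(\partial_z,\partial_{\ol z})=0$.

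\textbf{The identity for $\ud C$.} I decompose $\xi\circ\phi=T+CN$ with $T\in\mathcal T_\phi$, so that $\gamma(X)=g_4(\ud\phi(X),T)$ and $|T|^2=1-C^2$. For a tangent vector $X$ I differentiate $C=g_4(N,\xi\circ\phi)$:
\[
X(C)=g_4\bigl(\nabla^{\phi^*g_4}_XN,\xi\bigr)+g_4\bigl(N,\nabla^{g_4}_{\ud\phi(X)}\xi\bigr).
\]
In the first term, $\nabla_XN$ is tangent, with $g_4(\nabla_XN,\ud\phi(Z))=-B(X,Z)$, so the term equals $-B(X,T^\sharp)$, where $T^\sharp$ is the preimage of $T$. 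The second term is $2\operatorname{vol}_{g_4}(\ud\phi(X),T,N)$ by \eqref{eq:bundle-curvature-two}, which is a multiple of $(*\gamma)(X)$ with coefficient $\pm2e^{2u}$-normalized. Hence $\ud C$ is expressed through $B$ and $*\gamma$. Once $B$ is known, substituting $B(X,T^\sharp)=-\tfrac32\bigl((*\gamma)(X)\gamma(T^\sharp)+\gamma(X)(*\gamma)(T^\sharp)\bigr)$, together with $\gamma(T^\sharp)=1-C^2$ and $(*\gamma)(T^\sharp)=0$, yields the coefficient $-2+\tfrac32(1-C^2)\cdot(-1)\cdots$. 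This should combine to $-\tfrac{1+3C^2}{2}$, and I will track the sign conventions for $*$ and the orientation of $(\xi,K,J)$ carefully so that this works.

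\textbf{The identity for $B$.} In a conformal coordinate $z$, set
\[
\Phi:=\Bigl(B(\partial_z,\partial_z)-3\mathrm i\,\gamma(\partial_z)^2\Bigr)\ud z^2,
\]
noting that $\bigl((*\gamma)\otimes\gamma+\gamma\otimes*\gamma\bigr)(\partial_z,\partial_z)=-2\mathrm i\gamma_z^2$ for $*\ud z=-\mathrm i\,\ud z$. The claimed formula for $B$ is equivalent to $\Phi\equiv0$ together with $\operatorname{tr}B=0$. I will show that $\Phi$ is holomorphic on $\mathbb S^2\backslash\mathcal B$ via the following steps.
\begin{enumerate}
\item Apply the Codazzi equation in $(\mathbb S^3,g_4)$. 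The normal curvature term $g_4\bigl(R^{g_4}(\phi_z,\phi_{\ol z})\phi_z,N\bigr)$ is computed from the Berger curvature tensor, which is determined by the Killing field with $\nabla\xi=2\,\cdot\times\xi$ and $|\xi|=1$. This gives $\partial_{\ol z}B_{zz}=(\text{const})\,e^{2u}C\,\gamma_z$.
\item Use $\nabla_{\ol z}(\xi\circ\phi)$ from \eqref{eq:bundle-curvature-two} to compute $\partial_{\ol z}\gamma_z$, which also turns out proportional to $e^{2u}C$.
\end{enumerate}
The constant $3$ is exactly the one making $\partial_{\ol z}\Phi=0$; it equals $\kappa-4\tau^2$ in Abresch--Rosenberg notation, with $\kappa=16$ and $\tau=2$ after normalization.

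\textbf{Extension and vanishing, and the main obstacle.} Next I show that $\Phi$ extends holomorphically across $\mathcal B$. Using Lemma~\ref{lem:local-branch-model}, $B_{zz}=g_4(\nabla_z\phi_z,N)=z^{m_p}g_4(\nabla_zG,N)$ and $\gamma_z=z^{m_p}g_4(G,\xi)$ are smooth, and even vanish at branch points. So $\Phi$ is a smooth, hence holomorphic, quadratic differential on $\mathbb S^2$, and therefore vanishes because $\mathbb S^2$ carries none. The main obstacle is the Codazzi/curvature bookkeeping in step 1: getting the Berger curvature term and the constant $3$ with the correct sign, consistent with the chosen orientation and Hodge star. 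I would first verify the constant on a known example, such as a horizontal-type surface or the Clifford-type torus, or simply redo it in the right-invariant frame $(\xi,K,J)$ with the bracket relations from Lemma~\ref{lem:berger-hopf-field}.
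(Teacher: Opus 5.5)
Your proposal is correct and follows the paper's proof. The paper also shows that the Abresch--Rosenberg differential $4\mathrm i\bigl(B_{zz}-3\mathrm i\gamma_z^2\bigr)\,\ud z^2$ (Torralbo--Urbano with $(\kappa,\tau)=(4,2)$) is holomorphic, bounded at branch points via $\phi_z=z^{m_p}G$, and hence zero on $\mathbb S^2$; it then gets $\ud C$ from $C_z=2\mathrm i\gamma_z-2e^{-2u}\overline{\gamma_z}\,B_{zz}$, which is exactly your Killing-field differentiation of $C$, so the only difference is that you rederive the Codazzi and $C_z$ identities by hand rather than quoting Torralbo--Urbano.

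Two slips are cosmetic:
\begin{itemize}
\item The constant is $3=(4\tau^2-\kappa)/(2\tau)$, not $\kappa-4\tau^2$, which with your values $\kappa=16$, $\tau=2$ gives $0$. Indeed $\partial_{\ol z}B_{zz}=-6e^{2u}C\gamma_z$ cancels $\partial_{\ol z}(3\mathrm i\gamma_z^2)=-6e^{2u}C\gamma_z$.
\item The correct combination in the $\ud C$ step is $-2+\tfrac32(1-C^2)=-\tfrac{1+3C^2}{2}$, not $-2+\tfrac32(1-C^2)\cdot(-1)$.
\end{itemize}
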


\begin{proof}
Choose $p\in\mathbb S^2$ and a branch chart $z=x+\mathrm i y$ provided
by Lemma \ref{lem:local-branch-model}. On the regular part of this
chart, write
\[
 \gamma=a\,\ud x+b\,\ud y
\]
and define
\[
 B_{ij}:=
 g_4\left(
  \nabla^{\phi^*g_4}_{\partial_i}\bigl(\ud\phi(\partial_j)\bigr),N
 \right).
\]
Thus, $B_{ij}=B(\partial_i,\partial_j)$ for the tensor in \eqref{defB}.

Since $\gamma=\phi^*(\xi^\flat)$, we have
$a=g_4(\phi_x,\xi\circ\phi)$ and
$b=g_4(\phi_y,\xi\circ\phi)$.  Lemma
\ref{lem:local-branch-model} gives
$\ud\phi=O(|z|^{m_p})$, and hence
$a,b=O(|z|^{m_p})$.  It also gives
\[
 \phi_x=z^{m_p}G+\overline z^{m_p}\overline G,
 \quad
 \phi_y=\mathrm i(z^{m_p}G-\overline z^{m_p}\overline G).
\]
For $m_p=0$, smoothness gives $B_{ij}=O(1)$.  If
$m_p\geq1$, the terms of order $m_p-1$ obtained by differentiating the
explicit powers are linear combinations of $G$ and $\overline G$.  By
\eqref{eq:extended-tangent-plane-local}, they are tangent and therefore orthogonal
to $N$.  Every other term contributing to the normal component retains a
factor $z^{m_p}$ or $\overline z^{m_p}$. Thus
, $B_{ij}=O(|z|^{m_p})$ in both cases.  Combining this estimate with those for
$a$ and $b$ gives
\begin{equation}
 a,b=O(|z|^{m_p}),
 \quad
 B_{ij}=O(|z|^{m_p}).\label{abBijestimate}
\end{equation}

In the notation of \cite[Section~2.1]{TU12}, take
$(\kappa,\tau)=(4,2)$.  Then
\[
 g_{4,2}
 =g_{\mathrm r}+3g_{\mathrm r}(\,\cdot\,,V_{\mathrm H})^2
 =g_4.
\]
In this identification, the unit vertical field is
$\xi=\frac12V_{\mathrm H}$, and \eqref{eq:bundle-curvature-two} reads
$\nabla_X^{g_4}\xi=2X\mathbin{\times_{g_4}}\xi$. Hence, the parameter denoted
by $\tau$ in \cite{TU12} equals $2$.  Thus, our
oriented Berger sphere is precisely $(\mathbb S^3,g_{4,2})$ in \cite{TU12}.
Since $B$ is symmetric and $\phi$ is minimal by
Lemma~\ref{lem:finite-branch-set},
$B_{22}=-B_{11}$ in $ \mathbb{S}^2\backslash\mathcal{B}$.  Define on the regular set
\begin{equation}
\begin{aligned}
 \mathfrak a
 &:=g_4(\phi_z,\xi\circ\phi)=\frac12(a-\mathrm i b),\\
 p
 &:=g_4\left(
    \nabla^{\phi^*g_4}_{\partial_z}\phi_z,N
  \right)
  =\frac14(B_{11}-B_{22}-2\mathrm iB_{12})
  =\frac12(B_{11}-\mathrm iB_{12}).
\end{aligned}
\label{eq:berger-complex-coefficients}
\end{equation}
For $H=0$ (where $H$ denotes the scalar mean curvature and vanishes by
Lemma~\ref{lem:finite-branch-set}) and
$(\kappa,\tau)=(4,2)$, their
Abresch--Rosenberg differential \cite[(3.2)]{TU12} is
\[
 \Theta=4\mathrm i\bigl(p-3\mathrm i\mathfrak a^2\bigr)(\ud z)^2.
\]
It is holomorphic on the regular set
because the Abresch--Rosenberg differential
\cite[(3.2)]{TU12} is holomorphic for any constant-mean-curvature
immersion, and here $H\equiv0$. By the estimates \eqref{abBijestimate}, the
displayed definitions of $\mathfrak a$ and $p$ give
$\mathfrak a,p=O(|z|^{m_p})$. Hence, the coefficient
$4\mathrm i(p-3\mathrm i\mathfrak a^2)$ of $\Theta$ is bounded at any
branch point.  The removable singularity theorem extends
$\Theta$ holomorphically to $\mathbb S^2$.  Since $\mathbb S^2$ has no
nonzero holomorphic quadratic differentials, $\Theta=0$. Therefore, $p=3\mathrm i\mathfrak a^2$, which, together with $B_{22}=-B_{11}$, is
equivalent to
\begin{equation}
 B_{11}=3ab,\quad
 B_{12}=-\frac32(a^2-b^2),\quad
 B_{22}=-3ab.                                      \label{eq:direct-step-1}
\end{equation}
For $H=0$ and $(\kappa,\tau)=(4,2)$, the last three compatibility equations
in \cite[Proposition~3.1, (3.3)]{TU12} read
\[
 C_z=2\mathrm i\mathfrak a
       -2e^{-2u}\overline{\mathfrak a}\,p,
 \quad
 \mathfrak a_{\ol z}=\mathrm i e^{2u}C,
 \quad
 |\mathfrak a|^2=\frac{e^{2u}}4(1-C^2).
\]
Using $p=3\mathrm i\mathfrak a^2$ and
$|\mathfrak a|^2=\frac14e^{2u}(1-C^2)$ in the formula displayed for $C_z$
gives
\begin{equation}
 C_z=\frac{\mathrm i}{2}(1+3C^2)\mathfrak a,
 \quad
 \mathfrak a_{\ol z}=\mathrm i e^{2u}C,
 \quad
 |\mathfrak a|^2=\frac{e^{2u}}4(1-C^2).
\label{eq:berger-complex-identities}
\end{equation}
Since $\mathfrak a=\frac12(a-\mathrm i b)$, the equations
in \eqref{eq:berger-complex-identities} are equivalent to
\begin{equation}
\begin{aligned}
 a_x+b_y&=0,
 &a_y-b_x&=4e^{2u}C,\\
 C_x&=\frac12(1+3C^2)b,
 &C_y&=-\frac12(1+3C^2)a,\\
 a^2+b^2&=e^{2u}(1-C^2).
\end{aligned}                                      \label{eq:direct-step-2}
\end{equation}
These are precisely the zero-mean-curvature, zero
Abresch--Rosenberg-differential specialization of the compatibility
equations for surfaces in the Berger sphere $g_4$.

Let $*$ be the Hodge star of the oriented induced conformal structure,
with
\[
 *\ud x=\ud y,\quad *\ud y=-\ud x.
\]
Since $g=\phi^*g_4$ is conformal to $g_{\mathrm r}$ on the regular set and
the Hodge star on one-forms is conformally invariant in dimension two, this
is the restriction of the globally defined round-sphere Hodge star.  In
particular, $*$ itself does not have a singularity at a branch point.
Equations \eqref{eq:direct-step-1}--\eqref{eq:direct-step-2} have coordinate-free forms
\begin{equation}
 B=-\frac32\bigl((*\gamma)\otimes\gamma
                       +\gamma\otimes(*\gamma)\bigr),               \label{eq:direct-step-3}
\end{equation}
and, on the regular set,
\begin{equation}
 \ud C=-\frac{1+3C^2}{2}(*\gamma), \label{eq:direct-step-4}
\end{equation}
where we recall that $ C $ and $ \gamma $ are defined in \eqref{defCgamma}.
\end{proof}

The preceding identities determine a tangential
correction to the normal field whose first metric variation vanishes.

\begin{samepage}
\begin{lem}
\label{lem:canonical-conformal-variation}
Define
\begin{equation}
 T:=\xi\circ\phi-CN,
 \quad
 k(C):=\frac{6C}{1+3C^2},
 \quad
 \sigma:=k(C)T,
 \quad
 W:=N+\sigma.\label{TkCNsg}
\end{equation}
Here $\xi=\frac12V_{\mathrm H}$ is the unit Hopf
Killing field defined in \eqref{eq:g4-unit-Hopf-field}.
Then the following properties hold.
\begin{enumerate}
\item $T,\sigma\in C^\infty(\mathbb S^2,\mathcal T_\phi)$ and
$W\in C^\infty(\mathbb S^2,\phi^*T\mathbb S^3)$.
\item For any $ x\in\mathbb{S}^2 $, 
\[
 |W|_{g_4}^2\leq\frac{13}{4}.
\]
\item For any smooth variation
\[
 F:(-\varepsilon,\varepsilon)\times\mathbb S^2\to\mathbb S^3,
 \quad F_0=\phi,
 \quad \left.\partial_tF_t\right|_{t=0}=W,
\]
the induced metrics $g_t:=F_t^*g_4$ satisfy
\[
 \left.\frac{\ud}{\ud t}\right|_{t=0}g_t=0.
\]
The identity above means that $W$ is an
infinitesimally isometric variation field.  Since a zero metric variation is,
in particular, a conformal metric variation, $W$ is also infinitesimally
conformal.
\end{enumerate}
\end{lem}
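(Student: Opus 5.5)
The plan is to handle the three assertions in order. Assertions (1) and (2) are pointwise algebra. Assertion (3) reduces to a tensor identity on the regular set, which is checked using Lemma~\ref{lem:berger-surface-identities} and then extended across $\mathcal B$ by continuity.

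\emph{Step 1: smoothness and tangency.} By Lemma~\ref{lem:ramified-tangent-normal-bundles}, $N$ is a smooth global unit section. Hence $C=g_4(N,\xi\circ\phi)$ is smooth and $T=\xi\circ\phi-CN$ is smooth. Moreover $g_4(T,N)=C-C|N|_{g_4}^2=0$, so $T$ is a section of $\mathcal T_\phi$. Since $1+3C^2\geq1$, the function $k(C)$ is smooth, so $\sigma$ and $W$ are smooth as well.

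\emph{Step 2: the pointwise bound.} Since $\xi$ is $g_4$-unit and $T\perp N$, we have $|T|_{g_4}^2=1-C^2$. Therefore
\[
 |W|_{g_4}^2=1+k(C)^2(1-C^2)=1+\frac{36s(1-s)}{(1+3s)^2},\quad s:=C^2\in[0,1].
\]
Differentiating in $s$, the numerator of the derivative is proportional to $(1-2s)(1+3s)-6s(1-s)=1-5s$. So the maximum occurs at $s=\frac15$, where the fraction equals $\frac94$. This gives $|W|_{g_4}^2\leq\frac{13}{4}$.

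\emph{Step 3: vanishing metric variation.} For tangent fields $X,Z$ on $\mathbb S^2$,
\[
 \left.\frac{\ud}{\ud t}\right|_{t=0}g_t(X,Z)
 =g_4\bigl(\nabla^{\phi^*g_4}_XW,\ud\phi(Z)\bigr)+g_4\bigl(\ud\phi(X),\nabla^{\phi^*g_4}_ZW\bigr).
\]
This is a smooth symmetric tensor on all of $\mathbb S^2$. It therefore suffices to show that it vanishes on the dense open set $\mathbb S^2\backslash\mathcal B$, which is dense because $\mathcal B$ is finite (Lemma~\ref{lem:finite-branch-set}). On that set we split $W=N+\sigma$.

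For the normal part, differentiating $g_4(N,\ud\phi(Z))=0$ and using \eqref{defB} gives the contribution $-2B(X,Z)$.

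For the tangential part $\sigma$, the contribution is the Lie derivative $\mathcal L_\sigma g$ of the induced metric $g=\phi^*g_4$. We compute it in three sub-steps.
\begin{enumerate}
\item Since $\xi$ is Killing (Lemma~\ref{lem:berger-hopf-field}), the symmetrized tangential derivative of $\xi\circ\phi$ vanishes. Writing $\xi\circ\phi=T+CN$, this yields $\mathcal L_Tg=2CB$.
\item Observe that $T^\flat=\gamma$ on tangent vectors. Then
\[
 \mathcal L_{kT}g=k\,\mathcal L_Tg+\ud k\otimes\gamma+\gamma\otimes\ud k .
\]
\item By Lemma~\ref{lem:berger-surface-identities}, $\ud C=-\frac{1+3C^2}{2}(*\gamma)$. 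Since $k'(C)=\frac{6(1-3C^2)}{(1+3C^2)^2}$, this gives
\[
 \ud k=-\frac{3(1-3C^2)}{1+3C^2}(*\gamma).
\]
Substituting the formula $B=-\frac32\bigl((*\gamma)\otimes\gamma+\gamma\otimes(*\gamma)\bigr)$ from the same lemma, the cross terms become $\frac{2(1-3C^2)}{1+3C^2}B$.
\end{enumerate}
Altogether,
\[
 \mathcal L_\sigma g=2B\cdot\frac{6C^2+1-3C^2}{1+3C^2}=2B.
\]
This exactly cancels the contribution $-2B$ from $N$.

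The main obstacle is keeping the sign and normalization conventions consistent in Step 3. The relevant conventions are the orientation of $N$, the Hodge star convention $*\ud x=\ud y$, and the factor $2$ in $\mathcal L_Tg=2CB$. It is precisely the choice $k=\frac{6C}{1+3C^2}$ that makes the coefficient collapse to $1$. I would first verify the computation in the chart of Lemma~\ref{lem:local-branch-model} using \eqref{eq:direct-step-1}--\eqref{eq:direct-step-2}, and only then pass to the coordinate-free form.
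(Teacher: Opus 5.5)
Your proposal is correct and follows the paper's proof essentially step for step: the same splitting $W=N+k(C)T$, the Killing identity $\mathcal L_{T^\sharp}g=2CB$, the Leibniz rule combined with $\ud k=-\frac{3(1-3C^2)}{1+3C^2}(*\gamma)$ and the formula for $B$ from Lemma~\ref{lem:berger-surface-identities}, and extension across $\mathcal B$ by density. The only differences are cosmetic. You maximize $36s(1-s)/(1+3s)^2$ by calculus, whereas the paper uses $(1-5s)^2\geq0$. Also, your $\mathcal L_\sigma g$ should be read as $\mathcal L_Y g$ for the lift $Y=k(C)T^\sharp$ with $\ud\phi(Y)=\sigma$ on $\mathbb S^2\backslash\mathcal B$.
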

\end{samepage}

\begin{proof}

Let $N$ be the global unit normal supplied by Lemma
\ref{lem:ramified-tangent-normal-bundles} and set
\[
 T:=\xi\circ\phi-CN.
\]
Since $C=g_4(N,\xi\circ\phi)$ and $|N|_{g_4}=1$,
\[
 g_4(T,N)=C-C|N|_{g_4}^2=0.
\]
Thus $T$ is a smooth section of $\mathcal T_\phi$ and is the component of the
Hopf field tangent to the extended tangent plane.  Moreover,
\begin{equation}
 |T|_{g_4}^2=1-C^2.                                  \label{eq:tangent-Hopf-length}
\end{equation}
Because $1+3C^2>0$, the smooth function
\[
 k(C):=\frac{6C}{1+3C^2}
\]
defines the global smooth fields
\begin{equation}
 \sigma:=k(C)T,
 \quad
 W:=N+\sigma=N+k(C)T.                              \label{eq:direct-step-5}
\end{equation}

The length bound is algebraic.  Since $N\perp T$,
\begin{equation}
 |W|_{g_4}^2
 =1+\frac{36C^2(1-C^2)}{(1+3C^2)^2}.                               \label{eq:direct-step-11}
\end{equation}
For $s_0=C^2\in[0,1]$,
\begin{equation}
 \frac{36s_0(1-s_0)}{(1+3s_0)^2}\leq\frac94,                       \label{eq:direct-step-12}
\end{equation}
because \eqref{eq:direct-step-12} is equivalent to
\[
 16s_0(1-s_0)\leq(1+3s_0)^2
 \quad\Longleftrightarrow\quad
 (1-5s_0)^2\geq0.
\]
Hence
\begin{equation}
 |W|_{g_4}^2\leq\frac{13}{4}.                                      \label{eq:direct-step-13}
\end{equation}

It remains to prove the asserted metric cancellation.
On the regular set of $\phi$, since $\ud\phi$ is nondegenerate, let
\[
 T^\sharp,Y\in C^\infty\left(
 \mathbb S^2\backslash\mathcal B,
 T(\mathbb S^2\backslash\mathcal B)
 \right)
\]
be the unique tangent vector fields satisfying
\begin{equation}
 \ud\phi(T^\sharp)=T,
 \quad
 Y=k(C)T^\sharp.                                      \label{eq:direct-step-tangent-lifts}
\end{equation}
The metric dual of $T$ on the surface is $\gamma$, so
\begin{equation}
 (T^\sharp)^\flat=\gamma,
 \quad
 Y^\flat=k(C)\gamma.                                                \label{eq:direct-step-6}
\end{equation}

Recall from \eqref{eq:round-Hopf-field} that $V_{\mathrm H}$ is the
infinitesimal generator of the Hopf circle action $\Phi_t$.  This action
preserves both $g_{\mathrm r}$ and $V_{\mathrm H}$, so
\eqref{eq:Berger-convention} gives $\Phi_t^*g_4=g_4$.  Hence
$V_{\mathrm H}$ is a $g_4$-Killing field on
the target Berger sphere $(\mathbb S^3,g_4)$, and
\eqref{eq:g4-unit-Hopf-field} defines the
unit Killing field on this target
$\xi=\frac12V_{\mathrm H}$. 

On $\mathbb S^2\backslash\mathcal B$, put $g:=\phi^*g_4$ and let $B$ be the
scalar second fundamental form Lemma
\ref{lem:berger-surface-identities}.  We claim that the tangential component
$T=\ud\phi(T^\sharp)$ of $\xi\circ\phi$ satisfies
\begin{equation}
 \mathcal L_{T^\sharp}g=2CB,                                       \label{eq:direct-step-7}
\end{equation}
To prove the claim, for each
$x\in\mathbb S^2\backslash\mathcal B$, the shape
operator of $\phi$ with respect to $N$ is the linear map $A_x$ determined by
\[
 A_x:T_x(\mathbb S^2\backslash\mathcal B)
 \to T_x(\mathbb S^2\backslash\mathcal B),
 \quad
 g_x(A_xX,Z)=B_x(X,Z),
 \quad X,Z\in T_x(\mathbb S^2\backslash\mathcal B).
\]
As $x$ varies, $A=(A_x)$ is a smooth section of
$\operatorname{End}\bigl(T(\mathbb S^2\backslash\mathcal B)\bigr)$.  With
this convention, the Weingarten identity is
\begin{equation}
 \nabla^{\phi^*g_4}_XN=-\ud\phi(AX).
 \label{eq:Weingarten-identity}
\end{equation}
Indeed, let $\nabla^g$ be the Levi--Civita connection of $g$.  For
$X,Z\in C^\infty\bigl(\mathbb S^2\backslash\mathcal B,
T(\mathbb S^2\backslash\mathcal B)\bigr)$, use
$T=\ud\phi(T^\sharp)$ and $\xi\circ\phi=T+CN$ to compute
\[
\begin{aligned}
0
&=(\mathcal L_\xi g_4)(\ud\phi(X),\ud\phi(Z))\\
&=g_4\left(
  \nabla^{\phi^*g_4}_X(\xi\circ\phi),\ud\phi(Z)
 \right)
 +g_4\left(
  \ud\phi(X),\nabla^{\phi^*g_4}_Z(\xi\circ\phi)
 \right)\\
&=g_4\left(\nabla^{\phi^*g_4}_XT,\ud\phi(Z)\right)
 +g_4\left(\ud\phi(X),\nabla^{\phi^*g_4}_ZT\right)\\
&\quad
 +X(C)g_4(N,\ud\phi(Z))+Z(C)g_4(\ud\phi(X),N)\\
&\quad
 +C g_4\left(\nabla^{\phi^*g_4}_XN,\ud\phi(Z)\right)
 +C g_4\left(\ud\phi(X),\nabla^{\phi^*g_4}_ZN\right)\\
&=g(\nabla_X^gT^\sharp,Z)+g(X,\nabla_Z^gT^\sharp)
  -C B(X,Z)-C B(Z,X)\\
&=(\mathcal L_{T^\sharp}g)(X,Z)-2C B(X,Z).
\end{aligned}
\]
Here, the terms containing $X(C)$ and $Z(C)$ vanish because $N$ is normal to
$\ud\phi\bigl(T(\mathbb S^2\backslash\mathcal B)\bigr)$.  The penultimate
equality uses the Gauss formula for $T=\ud\phi(T^\sharp)$, the Weingarten
identity \eqref{eq:Weingarten-identity}, and the symmetry of $B$.  This proves
\eqref{eq:direct-step-7}.

Put $s:=1+3C^2$.  By Lemma \ref{lem:berger-surface-identities},
\begin{equation}
 \ud k
 =\frac{6(1-3C^2)}{s^2}\,\ud C
 =-\frac{3(1-3C^2)}{s}(*\gamma).       \label{eq:direct-step-8}
\end{equation}
Using the identity for $B$ in Lemma \ref{lem:berger-surface-identities}
together with \eqref{eq:direct-step-7} and \eqref{eq:direct-step-8}, we obtain
\begin{equation}
\begin{aligned}
 \mathcal L_Yg
 &=\mathcal L_{kT^\sharp}g\\
 &=2kCB+\ud k\otimes\gamma+\gamma\otimes\ud k\\
 &=\frac{12C^2}{s}B
   -\frac{3(1-3C^2)}{s}
      \bigl((*\gamma)\otimes\gamma
                  +\gamma\otimes(*\gamma)\bigr)\\
 &=\frac{12C^2+2(1-3C^2)}{s}B
 =2B.                                                              \label{eq:direct-step-9}
\end{aligned}
\end{equation}
On $\mathbb S^2\backslash\mathcal B$,
\eqref{eq:direct-step-tangent-lifts} gives
\[
 \ud\phi(Y)=k(C)\ud\phi(T^\sharp)=k(C)T.
\]
Consequently, \eqref{eq:direct-step-5} becomes
\begin{equation}
 W=N+k(C)T=N+\ud\phi(Y)
 \quad\text{on }\mathbb S^2\backslash\mathcal B.       \label{eq:direct-step-normal-tangential-splitting}
\end{equation}
This normal--tangential splitting is asserted only on the regular set.
$T^\sharp$ and $Y$ are not required to extend across $\mathcal B$. At the
branch points, $W$ remains defined by the globally smooth expression
$W=N+k(C)T$ in \eqref{eq:direct-step-5}.

Choose a smooth variation
\[
 F:(-\varepsilon,\varepsilon)\times\mathbb S^2\to\mathbb S^3,
 \quad
 F_0=\phi,
 \quad
 \left.\partial_tF_t\right|_{t=0}=W,
\]
where $F_t:=F(t,\cdot)$.  Define the pullback symmetric two-tensors and their
first variation by
\[
 g_t:=F_t^*g_4,
 \quad
 \dot g:=\left.\frac{\ud}{\ud t}\right|_{t=0}g_t.
\]
For
$X,Z\in C^\infty\bigl(\mathbb S^2\backslash\mathcal B,
T(\mathbb S^2\backslash\mathcal B)\bigr)$, differentiation gives
\[
\begin{aligned}
 \dot g(X,Z)
 &=g_4\left(\nabla_X^{\phi^*g_4}W,\ud\phi(Z)\right)
   +g_4\left(\ud\phi(X),\nabla_Z^{\phi^*g_4}W\right)\\
 &=-B(X,Z)-B(Z,X)+(\mathcal L_Yg)(X,Z)\\
 &=\bigl(-2B+\mathcal L_Yg\bigr)(X,Z).
\end{aligned}
\]
Here, the second equality uses the regular-set splitting
\eqref{eq:direct-step-normal-tangential-splitting}, the definition of $B$ in
Lemma \ref{lem:berger-surface-identities}, the Weingarten identity
\eqref{eq:Weingarten-identity}, and the Gauss formula for the tangential term
$\ud\phi(Y)$, namely
\[
 \nabla_X^{\phi^*g_4}\bigl(\ud\phi(Y)\bigr)
 =\ud\phi(\nabla_X^gY)+B(X,Y)N.
\]
Equation \eqref{eq:direct-step-9} therefore gives
\begin{equation}
 \dot g=0
 \quad\text{on }\mathbb S^2\backslash\mathcal B.
 \label{eq:canonical-metric-variation-zero}
\end{equation}
Since $\dot g$ is a smooth two-tensor on $\mathbb S^2$ and the regular set
is dense, \eqref{eq:canonical-metric-variation-zero} holds on all of
$\mathbb S^2$.  Equivalently,
\[
 \left.\frac{\ud}{\ud t}\right|_{t=0}F_t^*g_4=0,
\]
which is assertion~(3) of the lemma.
\end{proof}

To use the energy--area comparison below, we now
express the metric cancellation in the complexified ramified tangent bundle.

\begin{lem}
\label{lem:canonical-tangential-cancellation}
Let $\sigma$ and $W=N+\sigma$ be the fields in Lemma
\ref{lem:canonical-conformal-variation}.  Set
\[
 \mathcal T_\phi^{\mathbb C}
 :=\mathcal T_\phi\otimes_{\mathbb R}\mathbb C.
\]
In a branch chart $z:U_p\to \mathbb D$ from Lemma
\ref{lem:local-branch-model}, define the complex line sub-bundles
\[
 \left.\mathcal T_\phi^{1,0}\right|_{U_p}
 :=\operatorname{span}_{\mathbb C}\{G\},
 \quad
 \left.\mathcal T_\phi^{0,1}\right|_{U_p}
 :=\operatorname{span}_{\mathbb C}\{\overline G\},
\]
and write $\sigma=\sigma^{1,0}+\sigma^{0,1}$. Extend the $g_4$ complex
bi-linearly and the pullback connection complex linearly, and let $\top$
denote the resulting orthogonal projection into
$\mathcal T_\phi^{\mathbb C}$.  Then
\begin{equation}
 \left(\nabla_{\partial_z}^{\phi^*g_4}N\right)^\top
 +\left(\nabla_{\partial_z}^{\phi^*g_4}
            \sigma^{0,1}\right)^\top
 =0
 \quad\text{on }U_p.
 \label{eq:canonical-tangential-cancellation}
\end{equation}
\end{lem}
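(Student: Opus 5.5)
The plan is to test the left-hand side of \eqref{eq:canonical-tangential-cancellation} against the local complex frame $\{G,\overline G\}$ of $\mathcal T_\phi^{\mathbb C}$ over $U_p$. By Lemma~\ref{lem:local-branch-model} and the proof of Lemma~\ref{lem:ramified-tangent-normal-bundles}, $G$ is nowhere zero with
\[
g_4(G,G)=0,\quad g_4(G,\overline G)>0.
\]
Hence a complexified vector has vanishing $\top$-component exactly when its complex-bilinear pairings with $G$ and with $\overline G$ both vanish.

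First I make the splitting of $\sigma$ explicit and smooth. Since $\sigma\in C^\infty(\mathbb S^2,\mathcal T_\phi)$ is real, I write
\[
\sigma=fG+\overline f\,\overline G,\qquad f:=\frac{g_4(\sigma,\overline G)}{g_4(G,\overline G)},
\]
so that $\sigma^{1,0}=fG$ and $\sigma^{0,1}=\overline f\,\overline G$, both smooth on $U_p$.

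Next I record a holomorphicity fact: $\nabla^{\phi^*g_4}_{\partial_z}\overline G=0$ on $U_p$.
\begin{itemize}
\item Conjugating \eqref{eq:harmonic-map-complex-equation} gives $\nabla_{\partial_z}\phi_{\ol z}=0$.
\item Since $\phi_{\ol z}=\overline z^{m_p}\overline G$ and $\partial_z\overline z^{m_p}=0$, this yields $\overline z^{m_p}\nabla_{\partial_z}\overline G=0$.
\item Hence $\nabla_{\partial_z}\overline G=0$ for $z\neq0$, and by continuity also at $z=0$.
\end{itemize}

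Pairing with $\overline G$ is then immediate. Since $g_4(N,\overline G)=0$, we get $g_4(\nabla_{\partial_z}N,\overline G)=-g_4(N,\nabla_{\partial_z}\overline G)=0$. For the other term,
\[
g_4(\nabla_{\partial_z}(\overline f\,\overline G),\overline G)=(\partial_z\overline f)\,g_4(\overline G,\overline G)+\overline f\,g_4(\nabla_{\partial_z}\overline G,\overline G)=0,
\]
using $g_4(\overline G,\overline G)=0$ and the holomorphicity fact.

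Pairing with $G$ is where I use Lemma~\ref{lem:canonical-conformal-variation}(3). Evaluating $\dot g=0$ complex-bilinearly on $(\partial_z,\partial_z)$ gives
\[
2g_4(\nabla_{\partial_z}W,\phi_z)=0,\quad\text{hence}\quad z^{m_p}g_4(\nabla_{\partial_z}W,G)=0 .
\]
So $g_4(\nabla_{\partial_z}W,G)=0$ off $z=0$, and everywhere by continuity. The $\sigma^{1,0}$ contribution drops out, because
\[
g_4(\nabla_{\partial_z}(fG),G)=(\partial_z f)\,g_4(G,G)+\tfrac f2\,\partial_z g_4(G,G)=0 .
\]
Since $W=N+\sigma^{1,0}+\sigma^{0,1}$, it follows that $g_4(\nabla_{\partial_z}N+\nabla_{\partial_z}\sigma^{0,1},G)=0$.

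Combining the two pairings proves \eqref{eq:canonical-tangential-cancellation}. The main point requiring care is the passage through the branch point. I handle it by working throughout with the smooth factors $G$ and $f$ rather than with $\phi_z$, and by cancelling the powers $z^{m_p}$ and $\overline z^{m_p}$ only on $z\neq0$ before extending by continuity. The only substantive input is the vanishing metric variation from Lemma~\ref{lem:canonical-conformal-variation}; everything else is the isotropy $g_4(G,G)=0$ together with the harmonic-map equation.
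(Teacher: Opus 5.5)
Your proof is correct and takes essentially the same route as the paper. Both arguments use $\dot g(\partial_z,\partial_z)=2g_4(\nabla_{\partial_z}W,\phi_z)=0$ from Lemma~\ref{lem:canonical-conformal-variation}, use isotropy to discard the $\sigma^{1,0}$ term, use the conjugated harmonic-map equation to kill the pairings with the $(0,1)$ direction, and conclude from the nondegenerate pairing between $\mathcal T_\phi^{1,0}$ and $\mathcal T_\phi^{0,1}$. The only difference is at the branch point: you test against the smooth nonvanishing factor $G$ on all of $U_p$ and cancel $z^{m_p}$, $\overline z^{m_p}$ before extending by continuity, whereas the paper works with $\phi_z,\phi_{\ol z}$ on the regular set and then invokes smooth extension of the projections, so your treatment is slightly more explicit.
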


\begin{proof}
Choose a smooth variation $F$ with initial field $W$, and put
$g_t:=F_t^*g_4$. The metric compatibility and the torsion-free identity give

\[
 \frac{\ud}{\ud t}g_4\bigl(\ud F_t(X),\ud F_t(Z)\bigr)
 =g_4\bigl(\nabla_{\partial_t}\ud F_t(X),\ud F_t(Z)\bigr)
  +g_4\bigl(\ud F_t(X),\nabla_{\partial_t}\ud F_t(Z)\bigr),
\]
and, since $[\partial_t,X]=[\partial_t,Z]=0$,
\[
 \nabla_{\partial_t}\ud F_t(X)=\nabla_X\partial_tF,
 \quad
 \nabla_{\partial_t}\ud F_t(Z)=\nabla_Z\partial_tF.
\]
Therefore,
on the regular part of a branch chart,
\begin{equation}
 0=\dot g(\partial_z,\partial_z)
 =2g_4\left(
   \nabla_{\partial_z}^{\phi^*g_4}W,\phi_z
  \right),
 \label{eq:canonical-complex-metric-variation}
\end{equation}
where the first equality follows from Lemma
\ref{lem:canonical-conformal-variation}.

 Outside of the branch set, $\mathcal T_\phi^{1,0}$ and
$\mathcal T_\phi^{0,1}$ are spanned by $\phi_z$ and $\phi_{\ol z}$,
respectively, because
\eqref{eq:local-branch-model} gives
$\phi_z=z^{m_p}G$ and
$\phi_{\ol z}=\overline z^{m_p}\overline G$ with $z\neq0$.  Hence, locally
$\sigma^{1,0}=f_1\phi_z$ and
 $\sigma^{0,1}=f_0\phi_{\ol z}$.  Differentiating
\[
g_4(\phi_z,\phi_z)=g_4(\phi_{\ol z},\phi_{\ol z})=0
\]
gives the first two
identities below.  The harmonic-map equation
\eqref{eq:harmonic-map-complex-equation} gives
$\nabla_{\partial_{\ol z}}^{\phi^*g_4}\phi_z=0$.  Since the pullback
connection is torsion free and $[\partial_z,\partial_{\ol z}]=0$, it follows
that $\nabla_{\partial_z}^{\phi^*g_4}\phi_{\ol z}=0$. Thus, differentiating
$g_4(N,\phi_{\ol z})=0$ gives the following three properties:
\begin{equation}
\begin{aligned}
 g_4\left(\nabla_{\partial_z}^{\phi^*g_4}
          \sigma^{1,0},\phi_z\right)&=0,\\
 g_4\left(\nabla_{\partial_z}^{\phi^*g_4}
          \sigma^{0,1},\phi_{\ol z}\right)&=0,\\
 g_4\left(\nabla_{\partial_z}^{\phi^*g_4}N,
          \phi_{\ol z}\right)&=0.
\end{aligned}
\label{eq:canonical-type-identities}
\end{equation}
The last two identities show that
\[
 \left(\nabla_{\partial_z}^{\phi^*g_4}N\right)^\top,
 \quad
 \left(\nabla_{\partial_z}^{\phi^*g_4}
          \sigma^{0,1}\right)^\top
 \in\mathcal T_\phi^{0,1}.
\]
Using $W=N+\sigma^{1,0}+\sigma^{0,1}$
from \eqref{eq:direct-step-5}, the first identity in
\eqref{eq:canonical-type-identities} and
\eqref{eq:canonical-complex-metric-variation} shows that

\[
 g_4\left(
  (\nabla_{\partial_z}^{\phi^*g_4}N)^\top
  +(\nabla_{\partial_z}^{\phi^*g_4}\sigma^{0,1})^\top,
  \phi_z
 \right)=0.
\]
The pairing of
$\mathcal T_\phi^{0,1}$ with the nonzero vector $\phi_z$ is nondegenerate,
so the sum vanishes on the regular set.  Both terms in
\eqref{eq:canonical-tangential-cancellation} extend smoothly through the
branch point in the ramified tangent bundle. Hence, the identity holds for all
of $U_p$. Here, the smooth extension of the
tangent projections across the branch point uses the ramified tangent bundle
constructed in Lemma \ref{lem:ramified-tangent-normal-bundles}.
\end{proof}

The next lemma identifies the scalar potential in
the normal Jacobi operator in terms of the angle function $C$.

\begin{lem}\label{lem:jacobi-potential-formula}
On $ \mathbb{S}^2\backslash\mathcal{B} $, let
$g:=\phi^*g_4$ and
\[
 q:=|B|_g^2+\operatorname{Ric}_{g_4}(N,N).
\]
Then
\begin{equation}
 q=\frac12(1+3C^2)^2.\label{eq:direct-step-16}
\end{equation}
Here
$C=g_4(N,\xi\circ\phi)$ is the angle function defined in \eqref{defCgamma}.
\end{lem}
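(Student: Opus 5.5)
The plan is to compute the two summands of $q$ separately, each as an explicit function of $C$, and then add them. Both computations are pointwise on $\mathbb S^2\backslash\mathcal B$. We only use the identities of Lemma~\ref{lem:berger-surface-identities} and the curvature of the Berger sphere in its $(\kappa,\tau)=(4,2)$ description from \cite{TU12}.

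\emph{Second fundamental form.} Fix a regular point and a $g$-orthonormal oriented frame $(e_1,e_2)$ there, with $\gamma=a\,e^1+b\,e^2$, so that $*\gamma=-b\,e^1+a\,e^2$. By construction $\gamma$ is the $g$-dual of $T^\sharp$, where $\ud\phi(T^\sharp)=T$. Hence \eqref{eq:tangent-Hopf-length} gives
\[
|\gamma|_g^2=a^2+b^2=|T|_{g_4}^2=1-C^2 .
\]
The formula $B=-\frac32\bigl((*\gamma)\otimes\gamma+\gamma\otimes(*\gamma)\bigr)$ then gives, in this frame,
\[
B_{11}=3ab,\quad B_{22}=-3ab,\quad B_{12}=-\tfrac32(a^2-b^2).
\]
This is consistent with \eqref{eq:direct-step-1}. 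Therefore
\[
|B|_g^2=2\cdot 9a^2b^2+2\cdot\tfrac94(a^2-b^2)^2=\tfrac92(a^2+b^2)^2=\tfrac92(1-C^2)^2 .
\]

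\emph{Ricci term.} For $E(\kappa,\tau)$, the Ricci tensor is diagonal in any frame consisting of $\xi$ and two horizontal vectors. Its eigenvalue on $\xi$ is $2\tau^2$, and its eigenvalue on horizontal vectors is $\kappa-2\tau^2$. With $(\kappa,\tau)=(4,2)$ these are $8$ and $-4$. These values can be checked directly from Lemma~\ref{lem:berger-hopf-field}. From $\nabla_X\xi=2X\times_{g_4}\xi$, the Killing identity gives $R(X,\xi)\xi=4X$ for horizontal unit $X$, so $\operatorname{Ric}(\xi,\xi)=8$. The horizontal sectional curvature is $\kappa-3\tau^2=-8$, so $\operatorname{Ric}(X,X)=4-8=-4$. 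We decompose $N=C\xi+N^h$ with $|N^h|^2=1-C^2$; the mixed terms vanish by this diagonal form. Hence
\[
\operatorname{Ric}_{g_4}(N,N)=8C^2-4(1-C^2)=12C^2-4 .
\]

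\emph{Conclusion.} Adding the two contributions,
\[
q=\tfrac92(1-2C^2+C^4)+12C^2-4=\tfrac12+3C^2+\tfrac92C^4=\tfrac12(1+3C^2)^2 .
\]
This is \eqref{eq:direct-step-16}.

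The main obstacle is the Ricci eigenvalues: they must be correct for the normalization $g_4$ (Hopf fiber length weighted by $4$, and $\tau=2$ in \eqref{eq:bundle-curvature-two}). I would verify them with the right-invariant frame $(\xi,K,J)$ and the brackets $[\xi,J]=-K$, $[\xi,K]=J$, $[J,K]=-4\xi$ via the Koszul formula, rather than relying solely on the $E(\kappa,\tau)$ formulas.
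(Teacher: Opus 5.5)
Your proof is correct, but it takes a different route from the paper. The paper never evaluates $|B|_g^2$ or $\operatorname{Ric}_{g_4}(N,N)$ separately. It uses that $C$ is the normal component of the Killing field $\xi$, so linearizing $H=0$ along the isometric flow $\Phi_t^\xi\circ\phi$ gives the Jacobi equation $\Delta_gC+qC=0$. It then computes $\Delta_gC=-\frac12C(1+3C^2)^2$ from the compatibility identities \eqref{eq:direct-step-2}, and reads off $q$ by dividing by $C$ on the dense set $\{C\neq0\}$ (which needs its own argument) and extending by continuity. Your computation is instead pointwise and algebraic: $|B|_g^2=\frac92|\gamma|_g^4=\frac92(1-C^2)^2$ from \eqref{eq:direct-step-3}, and $\operatorname{Ric}_{g_4}(N,N)=8C^2-4(1-C^2)$ from the Ricci eigenvalues of $E(4,2)$. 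Those eigenvalues are right: with $e_1=\xi$, $e_2=K$, $e_3=J$ one has $[e_2,e_3]=4e_1$, $[e_3,e_1]=e_2$, $[e_1,e_2]=e_3$, and Milnor's formulas give principal Ricci curvatures $8,-4,-4$ in this frame. Your route avoids the mean-curvature linearization, the density argument and the division by $C$. From Lemma~\ref{lem:berger-surface-identities} it uses only the formula for $B$ (the vanishing of the Abresch--Rosenberg differential), not the formula for $\ud C$. Because only $|B|_g^2$ enters, the orientation and sign conventions behind \eqref{eq:direct-step-3} play no role. The cost is an explicit computation of the ambient curvature of $g_4$, which the paper never does; there, curvature enters only through the compatibility equations cited from \cite{TU12}. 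Your proof also does not by itself produce the Jacobi equation \eqref{eq:direct-step-14} or the identities \eqref{eq:direct-step-15} and \eqref{eq:angle-gradient-C}, which are reused in Lemma~\ref{lem:spherical-jacobi-metric}. These are easy to recover: \eqref{eq:direct-step-15} follows from \eqref{eq:direct-step-2} alone, and combining it with your formula for $q$ gives \eqref{eq:direct-step-14} without any variational argument. The agreement of the two computations is also a useful independent check on the normalization $(\kappa,\tau)=(4,2)$.
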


\begin{proof}
All calculations in this proof are performed first
 on the regular set $\mathbb S^2\backslash\mathcal B$, where
$g=\phi^*g_4$ is a smooth metric and $\phi$ is a minimal immersion according
to Lemma~\ref{lem:finite-branch-set}.
Let $\Phi_t^\xi$ be the local flow of the ambient Killing field
$\xi=\frac12V_{\mathrm H}$ defined in
\eqref{eq:g4-unit-Hopf-field}.  Each $\Phi_t^\xi$ is a $g_4$-isometry.
Since isometries preserve mean curvature, $\Phi_t^\xi\circ\phi$ is again
minimal.  Its initial field is
\[
 \xi\circ\phi=T+CN.
\]
Write $H:=\operatorname{tr}_gB$ for the scalar mean curvature.  With the sign
convention \eqref{defB}, the linearization in a
general direction $V=\ud\phi(X)+fN$ is
\begin{equation}
 D H_\phi[V]
 =X(H)+\Delta_gf+
   \bigl(|B|_g^2+\operatorname{Ric}_{g_4}(N,N)\bigr)f.
 \label{eq:mean-curvature-linearization}
\end{equation}
Indeed, at a point choose a $g$-orthonormal frame $(e_1,e_2)$ with
$\nabla^g e_i=0$.  For the normal part $fN$,
\[
 \dot g_{ij}=-2fB_{ij},
 \quad
 \dot B_{ij}=(\nabla^2f)_{ij}-fB_i{}^kB_{kj}
 +f\,g_4\bigl(R^{g_4}(N,\ud\phi(e_i))\ud\phi(e_j),N\bigr).
\]
Differentiating $H=g^{ij}B_{ij}$ gives the last two terms in
\eqref{eq:mean-curvature-linearization}. Reparameterization by $X$ contributes
to $X(H)$.  In the present case, $X=T^\sharp$ is determined by
$\ud\phi(T^\sharp)=T$ in \eqref{eq:direct-step-tangent-lifts}, and $f=C$.
Moreover, $H\equiv0$ by Lemma~\ref{lem:finite-branch-set}, while each immersion
$\Phi_t^\xi\circ\phi$ also has zero mean curvature.  Consequently,
\[
 0=D H_\phi[\xi\circ\phi]
  =T^\sharp(H)+(\Delta_g+q)C
  =(\Delta_g+q)C.
\]
Thus, we obtain the Jacobi equation
\begin{equation}
 \Delta_g C+qC=0,                                                   \label{eq:direct-step-14}
\end{equation}
where $g=\phi^*g_4$. In a local conformal coordinate $z=x+\mathrm i y$ with
$g=e^{2u}|\ud z|^2$, let
\[
 \Delta_0:=\partial_x^2+\partial_y^2
 =4\partial_z\partial_{\ol z}
\]
denote the Euclidean coordinate Laplacian.  Then
 $\Delta_g=e^{-2u}\Delta_0$.  Put $s:=1+3C^2$.
Recall from the proof of Lemma
\ref{lem:berger-surface-identities} that
$\gamma=a\,\ud x+b\,\ud y$, or equivalently
$a=g_4(\phi_x,\xi\circ\phi)$ and
$b=g_4(\phi_y,\xi\circ\phi)$.  The four identities in
\eqref{eq:direct-step-2} give
\[
 C_x=\frac{s}{2}b,
 \quad C_y=-\frac{s}{2}a,
 \quad b_x-a_y=-4e^{2u}C,
 \quad a^2+b^2=e^{2u}(1-C^2).
\]
Since $s_x=3Csb$ and $s_y=-3Csa$, direct differentiation produces
\begin{align*}
 \Delta_0C
 &=\frac12\bigl(s_xb-s_ya+s(b_x-a_y)\bigr)\\
 &=\frac{s}{2}\bigl(3C(a^2+b^2)-4e^{2u}C\bigr)
 =-\frac12e^{2u}Cs^2.
\end{align*}
Thus
\[
 \Delta_0 C
 =-\frac12e^{2u}C(1+3C^2)^2,
\]
and hence
\begin{equation}
 \Delta_g C
 =-\frac12C(1+3C^2)^2.                                             \label{eq:direct-step-15}
\end{equation}
Moreover, \eqref{eq:direct-step-4}, the identity
$\gamma=(T^\sharp)^\flat$ in \eqref{eq:direct-step-6} and
\eqref{eq:tangent-Hopf-length} give
\begin{equation}
 |\nabla C|_g^2
 =\frac14(1+3C^2)^2|\gamma|_g^2
 =\frac14(1+3C^2)^2(1-C^2).                         \label{eq:angle-gradient-C}
\end{equation}
The function $C$ cannot vanish on a nonempty open set. Indeed, on such a set
\eqref{eq:direct-step-4} gives $\gamma=0$, and hence $a=b=0$ in a branch
coordinate $z=x+\mathrm i y$ supplied by Lemma
\ref{lem:local-branch-model}. However, at any regular point in that open set, the last identity
in \eqref{eq:direct-step-2} would read $0=e^{2u}>0$, a contradiction.  Hence
$\{C\neq0\}$ is dense.  Comparing
\eqref{eq:direct-step-14} and \eqref{eq:direct-step-15}
on the dense set $\{C\neq0\}$, and then extending
the resulting identity by continuity, gives
\[
 q=\frac12(1+3C^2)^2,
\]
which implies \eqref{eq:direct-step-16}.
\end{proof}

The next lemma rescales the induced metric so that the Jacobi potential becomes
a spherical area density.

\begin{lem}\label{lem:spherical-jacobi-metric}
Define on $\mathbb S^2\backslash\mathcal B$ the
metric
\[
 g_{\circ}:=\frac{q}{2}g.
\]
Then $g_{\circ}$ has Gaussian curvature $1$ on
the regular set.  Moreover, in any branch chart from Lemma
\ref{lem:local-branch-model}, it has the form
\[
 g_{\circ}=|z|^{2m_p}\lambda_{\circ}|\ud z|^2,
 \quad \lambda_{\circ}\in C^\infty(U_p),\quad
 \lambda_{\circ}>0.
\]
Thus, $g_{\circ}$ has the same branch order $m_p$ as $\phi$ at any
$p\in\mathcal B$.  Consequently,
\begin{equation}
 \int_{\mathbb S^2\backslash\mathcal B}q\,\ud A_g
 =8\pi+4\pi\sum_{p\in\mathcal B}m_p
 \geq8\pi.
 \label{eq:spherical-jacobi-potential}
\end{equation}
\end{lem}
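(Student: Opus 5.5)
Since $q=\frac12(1+3C^2)^2$ by Lemma~\ref{lem:jacobi-potential-formula}, we have $g_\circ=\frac14(1+3C^2)^2 g$, and the proof has three parts: a curvature computation on the regular set, a local description at branch points, and Gauss--Bonnet.

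\textbf{Curvature.} On a regular chart write $g=e^{2u}|\ud z|^2$ and $g_\circ=e^{2w}|\ud z|^2$ with
\[
w=u+\log\tfrac{1+3C^2}{2},\quad s:=1+3C^2.
\]
The curvature of $g_\circ$ is $K_\circ=-e^{-2w}\Delta_0 w$. The terms of $\Delta_0 w$ are computed as follows.
\begin{itemize}
\item[(i)] The Gauss equation for a minimal surface in $(\mathbb S^3,g_4)$ gives $-e^{-2u}\Delta_0 u=K_g=K^{g_4}(\mathcal T_\phi)-\frac12|B|_g^2$. For a Berger sphere with $(\kappa,\tau)=(4,2)$, the sectional curvature of a plane with unit normal $N$ is $\tau^2+(\kappa-4\tau^2)C^2=4-12C^2$. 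The paper's normalization may differ, so this value must be checked against \cite{TU12}; in any case it is a quadratic polynomial in $C$.
\item[(ii)] From \eqref{eq:direct-step-3}, $|B|_g^2=\frac92|\gamma|_g^4=\frac92(1-C^2)^2$, using $|\gamma|_g^2=1-C^2$.
\item[(iii)] For $\log s$ we have $\Delta_g\log s=s^{-1}\Delta_g s-s^{-2}|\nabla s|_g^2$. Here $\Delta_g s=6C\Delta_g C+6|\nabla C|^2_g$, and both pieces are known from \eqref{eq:direct-step-15} and \eqref{eq:angle-gradient-C}.
\end{itemize}
Substituting, $e^{-2u}\Delta_0 w$ becomes a polynomial in $C$. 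The goal is to show it equals $-\frac14 s^2$, which gives $K_\circ=1$. The main risk is a constant error in the Gauss equation. To cross-check, use the alternative identity $\operatorname{Ric}_{g_4}(N,N)=q-|B|^2_g$: it must give $\frac12s^2-\frac92(1-C^2)^2$, and this should agree with the Berger formula $\operatorname{Ric}(N,N)=(\kappa-2\tau^2)+(4\tau^2-\kappa)C^2$ after the normalization is fixed.

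\textbf{Branch points.} In a branch chart, Lemma~\ref{lem:local-branch-model} gives $g=|z|^{2m_p}\lambda|\ud z|^2$. The angle function $C=g_4(N,\xi\circ\phi)$ is smooth on all of $\mathbb S^2$ by Lemma~\ref{lem:ramified-tangent-normal-bundles}, so $q=\frac12 s^2$ is smooth and positive, with $q\ge\frac12$. Hence $\lambda_\circ=\frac{q}{2}\lambda$ is smooth and positive.

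\textbf{Gauss--Bonnet.} Apply Gauss--Bonnet to the branched conformal metric $g_\circ$ on the closed sphere. Excise disks $|z|<\epsilon$ around each $p\in\mathcal B$. Since $w=m_p\log|z|+\text{smooth}$, the boundary geodesic-curvature term around each disk tends to $2\pi(m_p+1)$ rather than $2\pi$. This yields
\[
\int K_\circ\,\ud A_\circ=4\pi+2\pi\sum_{p\in\mathcal B} m_p.
\]
Because $K_\circ\equiv1$, this integral equals $\operatorname{Area}_{g_\circ}$, and $\ud A_\circ=\frac q2\,\ud A_g$. Multiplying by $2$ gives \eqref{eq:spherical-jacobi-potential}. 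Inequality $\geq 8\pi$ holds because $m_p\ge0$.

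The main obstacle is the constant bookkeeping in the curvature identity $K_\circ=1$. The branch-point and Gauss--Bonnet steps are standard.
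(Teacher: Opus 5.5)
Your proposal is correct. The branch-point step and the Gauss--Bonnet step match the paper's; your $2\pi(m_p+1)$ per disk is the paper's $-2\pi(m_p+1)$ with the opposite boundary orientation.

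The curvature step, however, takes a genuinely different route. The paper never uses the Gauss equation. On the dense set $\{|C|<1\}$ it puts $h=\operatorname{arctanh}C$ and uses $\Delta_gC=-\frac12Cs^2$ and $|\nabla C|_g^2=\frac14s^2(1-C^2)$ to show that $h$ is $g$-harmonic with $|\nabla h|_g^2=\frac{s^2}{4(1-C^2)}$. It then takes $h+\mathrm i\widetilde h$ as a conformal coordinate. In that coordinate $g_\circ=\operatorname{sech}^2x\,(\ud x^2+\ud y^2)$, which is visibly a round metric in Mercator form, and continuity extends $K_{g_\circ}=1$ to the whole regular set. That argument is intrinsic and uses only identities already proved, but it needs the density of $\{|C|<1\}$ and a harmonic conjugate.

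Your computation is pointwise on the whole regular set. The cost is that it imports the Berger sectional-curvature formula $K^{g_4}(\mathcal T_\phi)=\tau^2+(\kappa-4\tau^2)C^2$, which the paper never proves.

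The constants you were worried about do close. With $(\kappa,\tau)=(4,2)$ and $|B|_g^2=\frac92(1-C^2)^2$:
\begin{itemize}
\item $K_g=\frac74-\frac{15}{2}C^2-\frac94C^4$;
\item $\Delta_g\log s=\frac32-9C^2-\frac92C^4$;
\item hence $K_g-\Delta_g\log s=\frac14s^2$, and $K_{g_\circ}=\frac4{s^2}(K_g-\Delta_g\log s)=1$.
\end{itemize}

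Your proposed normalization check is also consistent. From the paper, $q-|B|_g^2=-4+12C^2=\operatorname{Ric}_{g_4}(N,N)$, which agrees with the Berger Ricci formula. Since the scalar curvature $2\kappa-2\tau^2$ of $(\mathbb S^3,g_4)$ vanishes, the three-dimensional identity $K(N^{\perp})=\frac12\mathrm{Scal}-\operatorname{Ric}(N,N)$ returns exactly $4-12C^2$. The paper's Mercator coordinates reproduce the same value of $K_g$, so the two computations agree.
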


\begin{proof}
Put $s:=1+3C^2$.  Lemma \ref{lem:jacobi-potential-formula} gives
\[
 q=\frac{s^2}{2},
 \quad
 g_{\circ}=\frac{s^2}{4}g.
\]
We first compute the curvature away from the branch set. Let
\[
 \mathcal U
 :=\{x\in\mathbb S^2\backslash\mathcal B:|C(x)|<1\}.
\]
This set is dense in the regular set. Indeed, if its complement contained a
nonempty open set, then $C$ would equal either $1$ or $-1$ on a smaller
connected open set.  This would contradict the Jacobi equation
\eqref{eq:direct-step-14}, because $q>0$. 

On $\mathcal U$, define
\[
 h:=\operatorname{arctanh}C.
\]
Using \eqref{eq:direct-step-14},
\eqref{eq:angle-gradient-C}, and
$q=s^2/2$, we obtain
\begin{equation}
 \begin{aligned}
 \Delta_gh
 &=\frac{\Delta_gC}{1-C^2}
   +\frac{2C}{(1-C^2)^2}|\nabla C|_g^2=0,\\
 |\nabla h|_g^2
 &=\frac{s^2}{4(1-C^2)}>0.
 \end{aligned}
 \label{eq:h-harmonic-gradient}
\end{equation}
Thus, near any point of $\mathcal U$, the harmonic function $h$ has a
harmonic conjugate $\widetilde h$, and
\[
 w:=h+\mathrm i\widetilde h=x+\mathrm i y
\]
is a holomorphic coordinate. Here we set
$x:=h$ and $y:=\widetilde h$.  The second identity in
\eqref{eq:h-harmonic-gradient} gives $\ud h\neq0$, and the Cauchy--Riemann
equations then give $\ud w\neq0$, so $(x,y)$ are valid local coordinates.
Write $g=e^{2v}|\ud w|^2$ in this coordinate.
Since $h=x$, the second identity in \eqref{eq:h-harmonic-gradient} gives
\[
 e^{-2v}=|\nabla h|_g^2
 =\frac{s^2}{4(1-C^2)}.
\]
As in $C=\tanh x$, it follows that
\begin{equation}
 g_{\circ}
 =\frac{s^2}{4}e^{2v}|\ud w|^2
 =(1-C^2)|\ud w|^2
 =\operatorname{sech}^2x\,(\ud x^2+\ud y^2).
 \label{eq:local-round-rescaled-metric}
\end{equation}
Write the metric in
\eqref{eq:local-round-rescaled-metric} as
\[
 g_{\circ}=e^{2\rho}(\ud x^2+\ud y^2),
 \quad \rho(x,y):=-\log\cosh x,
\]
defines its conformal factor $\rho$.  Its Gaussian curvature is
\[
 -e^{-2\rho}(\rho_{xx}+\rho_{yy})
 =-\cosh^2x\bigl(-\operatorname{sech}^2x\bigr)
 =1.
\]
Hence $K_{g_{\circ}}=1$ on $\mathcal U$.  Since $\mathcal U$ is dense and
$g_{\circ}$ is a smooth Riemannian metric on the regular set, the continuity of
its curvature gives
\begin{equation}
 K_{g_{\circ}}=1
 \quad\text{on }\mathbb S^2\backslash\mathcal B.
 \label{eq:rescaled-curvature-one}
\end{equation}

It remains to account for the branch points.  In the coordinate supplied by
Lemma \ref{lem:local-branch-model},
\[
 g=|z|^{2m_p}\lambda|\ud z|^2,
\]
where $\lambda$ is smooth and positive. Therefore
\[
 g_{\circ}
 =|z|^{2m_p}\frac{s^2\lambda}{4}|\ud z|^2.
\]
This follows directly by substituting
$g=|z|^{2m_p}\lambda|\ud z|^2$ into
$g_{\circ}=\frac{s^2}{4}g$.
The coefficient $s^2\lambda/4$ is smooth and positive, so $g_{\circ}$ has
the same branch order $m_p$ as $g$.

For sufficiently small $\varepsilon>0$, choose
pairwise disjoint branch-coordinate disks
$\mathbb D_{\varepsilon,p}:=\{|z_p|<\varepsilon\}$ and set
\[
 M_\varepsilon
 :=\mathbb S^2\backslash
   \bigcup_{p\in\mathcal B}\mathbb D_{\varepsilon,p}.
\]
Then $\chi(M_\varepsilon)=2-|\mathcal B|$, and the ordinary Gauss--Bonnet
formula is
\begin{equation}
 \int_{M_\varepsilon}K_{g_{\circ}}\,\ud A_{g_{\circ}}
 +\sum_{p\in\mathcal B}
   \int_{\partial\mathbb D_{\varepsilon,p}}
    k_{g_{\circ}}\,\ud s_{g_{\circ}}
 =2\pi(2-|\mathcal B|).
 \label{eq:gauss-bonnet-punctured-sphere}
\end{equation}
For a local metric
\[
 |z|^{2m_p}e^{2\omega}|\ud z|^2,
 \quad \omega\in C^\infty,
\]
write $u=m_p\log r+\omega$ and let $T_0$ and $n_0$ denote, respectively,
the Euclidean unit tangent and the left-pointing unit normal along the circle
$r=\varepsilon$ with the boundary orientation of the punctured surface.
Explicitly, the inner boundary is traversed
clockwise by $\theta\mapsto\varepsilon e^{-\mathrm i\theta}$,
$0\leq\theta\leq2\pi$.
Thus, $T_0=-r^{-1}\partial_\theta$, $n_0=\partial_r$, and the Euclidean
geodesic curvature is $k_0=-r^{-1}$.  The conformal-change formula gives
\[
 k_{e^{2u}|\ud z|^2}\,\ud s_{e^{2u}|\ud z|^2}
 =\bigl(k_0-\partial_{n_0}u\bigr)\,\ud s_0.
\]
Here
, $k_{e^{2u}|\ud z|^2}$ and $\ud s_{e^{2u}|\ud z|^2}$ denote, respectively,
the geodesic curvature and the arclength element computed with the metric
$e^{2u}|\ud z|^2$.  The quantities $k_0$ and $\ud s_0$ are their Euclidean
counterparts.
Because
\[
 \partial_{n_0}u=\frac{m_p}{r}+O(1),
 \quad \ud s_0=r\,\ud\theta,
\]
the geodesic-curvature integral of this inner boundary tends to
$-2\pi(m_p+1)$, because
\[
 \begin{aligned}
 \int_{\partial\mathbb D_{\varepsilon,p}}
  k_{e^{2u}|\ud z|^2}\,\ud s_{e^{2u}|\ud z|^2}
 &=\int_0^{2\pi}
   \left(-\frac1\varepsilon-\frac{m_p}{\varepsilon}+O(1)\right)
   \varepsilon\,\ud\theta\\
 &=-2\pi(m_p+1)+O(\varepsilon).
 \end{aligned}
\]
Since a sphere with
$\lvert\mathcal B\rvert$ disks removed has the Euler characteristic
$2-\lvert\mathcal B\rvert$, letting the disk radii tend to zero in
\eqref{eq:gauss-bonnet-punctured-sphere} gives

\[
 \begin{aligned}
 \int_{\mathbb S^2\backslash\mathcal B}
  K_{g_{\circ}}\,\ud A_{g_{\circ}}
 &=2\pi(2-|\mathcal B|)
   +2\pi\sum_{p\in\mathcal B}(m_p+1)\\
 &=4\pi+2\pi\sum_{p\in\mathcal B}m_p.
 \end{aligned}
\]
Equivalently,
\begin{equation}
 \int_{\mathbb S^2\backslash\mathcal B}
 K_{g_{\circ}}\,\ud A_{g_{\circ}}
 =4\pi+2\pi\sum_{p\in\mathcal B}m_p.
 \label{eq:branched-gauss-bonnet-rescaled}
\end{equation}
Finally,
\[
 \ud A_{g_{\circ}}=\frac q2\,\ud A_g.
\]
Combining this identity with
\eqref{eq:rescaled-curvature-one} and
\eqref{eq:branched-gauss-bonnet-rescaled} proves
\eqref{eq:spherical-jacobi-potential}.
\end{proof}

We finally compare the energy and area Hessians for the canonical variation.

\begin{lem}\label{lem:energy-area-index-identity}
For the section $W$ in Lemma \ref{lem:canonical-conformal-variation},
\[
 \IF_\phi^{g_4}(W,W)
 =-\int_{\mathbb S^2\backslash\mathcal B}q\,\ud A_g.
\]
\end{lem}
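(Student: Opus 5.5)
The plan is to reduce the energy Hessian of $W$ to the normal Jacobi form of the branched minimal surface. The Jacobi form evaluated on the constant normal coefficient $f\equiv1$ is exactly $-\int q\,\ud A_g$.

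\textbf{Step 1: pointwise formula.} I would work in a branch chart $z$ from Lemma~\ref{lem:local-branch-model}. By the conformal invariance \eqref{eq:source-conformal-invariance}, the source may be taken flat, $|\ud z|^2$. The integrand of \eqref{eq:index-form-definition} then becomes
\[
 4\,\operatorname{Re}\Bigl(g_4\bigl(\nabla_{\partial_z}W,\overline{\nabla_{\partial_z}W}\bigr)
 +g_4\bigl(\nabla_{\partial_{\ol z}}W,\overline{\nabla_{\partial_{\ol z}}W}\bigr)\Bigr)
 -4\,g_4\bigl(R^{g_4}(W,\phi_z)\phi_{\ol z},W\bigr)
\]
up to the usual normalization. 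I would integrate by parts to trade the $\partial_{\ol z}$ term for the $\partial_z$ term, which costs a curvature commutator $R^{g_4}(\phi_z,\phi_{\ol z})$. I would then split $\nabla_{\partial_z}W$ along $\phi^*T\mathbb S^3\otimes\mathbb C=\nu_\phi^{\mathbb C}\oplus\mathcal T_\phi^{1,0}\oplus\mathcal T_\phi^{0,1}$, using the smooth ramified splitting from Lemma~\ref{lem:ramified-tangent-normal-bundles}. This is the Micallef--type comparison. For a conformal harmonic map, the energy index equals the area index plus a nonnegative term of the form $4\int|(\nabla_{\partial_z}(N+\sigma^{0,1}))^{\top}|^2$. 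By Lemma~\ref{lem:canonical-tangential-cancellation}, that $\mathcal T^{0,1}$-valued term vanishes identically on each chart, including across branch points. The remaining terms, which involve $\sigma^{1,0}$, are holomorphic-type terms. For them I would verify that they combine into exact divergences of smooth globally defined $1$-forms, using $g_4(\phi_z,\phi_z)=0$ and \eqref{eq:harmonic-map-complex-equation}.

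\textbf{Step 2: identify the normal part.} The normal component of $W$ is $f N$ with $f\equiv1$, because $\sigma$ is tangential. The normal part of $\nabla_{\partial_z}W$ is $(\partial_z f)N$ plus $B$-terms pairing $\sigma$ with the second fundamental form. I would combine these with the curvature terms via the Gauss and Codazzi equations on the regular set. The target is for the pointwise density to reduce there to
\[
 |\nabla f|_g^2-\bigl(|B|_g^2+\operatorname{Ric}_{g_4}(N,N)\bigr)f^2
\]
plus a divergence; with $f\equiv1$ this density is $-q\,\ud A_g$. The divergence comes from the tangential field $\ud\phi(Y)$, using $H\equiv0$ as in the first variation of area.

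\textbf{Step 3: branch points, the main obstacle.} The tangential lift $Y=k(C)T^\sharp$ behaves like $|z|^{-m_p}$ near $p\in\mathcal B$. So I would not integrate divergences of $Y$ directly. Instead I would excise the disks $\mathbb D_{\varepsilon,p}$, as in the proof of Lemma~\ref{lem:spherical-jacobi-metric}, and show that the boundary fluxes tend to zero. The key point is to express every boundary term through smooth objects: $W$, $N$, $\sigma$, $G$, and $z^{m_p}$, via $\phi_z=z^{m_p}G$ and the estimates \eqref{abBijestimate}. In these terms each flux density is $O(1)$ on a circle of length $O(\varepsilon)$, or better. The left-hand side $\IF_\phi^{g_4}(W,W)$ is a smooth global integral, and $q\,\ud A_g$ is bounded by \eqref{eq:branched-induced-metric}. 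Letting $\varepsilon\to0$ then yields the identity $\IF_\phi^{g_4}(W,W)=-\int_{\mathbb S^2\backslash\mathcal B}q\,\ud A_g$.

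If the flux bookkeeping in Step 3 fails, I would first try organizing Step 2 entirely in complex form. That is, I would write the index density as $-4\partial_z$ or $-4\partial_{\ol z}$ of smooth expressions like $g_4(W,\nabla_{\partial_{\ol z}}W)$, so that the divergences are globally smooth and integrate to zero with no excision at all.
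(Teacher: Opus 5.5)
Your proposal follows the paper's route: the Ejiri--Micallef energy/area comparison, whose defect term $\eta=(\nabla^{\phi^*g_4}_{\partial_z}N)^\top+(\nabla^{\phi^*g_4}_{\partial_z}\sigma^{0,1})^\top$ vanishes by Lemma~\ref{lem:canonical-tangential-cancellation}, followed by the normal area Hessian of $N$, which equals $-\int q\,\ud A_g$ because $\nabla^\perp N=0$. The only difference is that the paper invokes the comparison as the global identity (2.27) of Ejiri--Micallef for branched immersions and checks only that $q\,\ud A_g$ is bounded near $\mathcal B$; your hand re-derivation of the $\sigma^{1,0}$ divergences, the $B$-terms involving $\sigma$ in Step~2, and the flux bookkeeping in Step~3 are therefore unnecessary.
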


\begin{proof}
For $ V\in C^\infty(\mathbb S^2,\phi^*T\mathbb S^3) $, choose a smooth variation
\[
 F^V:(-\varepsilon,\varepsilon)\times\mathbb S^2\to\mathbb S^3,
 \quad F_0^V=\phi,
 \quad
 \left.\partial_tF_t^V\right|_{t=0}=V.
\]

\begin{samepage}
Let $(e_1,e_2)$ be a local $g_{\mathrm r}$-orthonormal frame on
$\mathbb S^2$.  Define
\[
 \operatorname{Energy}_{g_{\mathrm r},g_4}(F)
 :=\frac{1}{2}\int_{\mathbb S^2}
   \sum_{\al=1}^2|\ud F(e_\al)|_{g_4}^2\,\ud A_{g_{\mathrm r}}
\]
and
\[
 \operatorname{Area}_{g_4}(F)
 :=\int_{\mathbb S^2}J_F\,\ud A_{g_{\mathrm r}},
 \quad
 J_F:=\sqrt{\det\left(
   g_4\bigl(\ud F(e_\al),\ud F(e_\beta)\bigr)
  \right)_{\al,\beta=1}^2}.
\]
The energy density and the area Jacobian $J_F$ are independent of the chosen
$g_{\mathrm r}$-orthonormal frame.  Hence the energy and area functionals just
defined are well defined.
\end{samepage}

By Lemma
\ref{lem:ramified-tangent-normal-bundles},
\[
\nu_\phi=\mathcal T_\phi^{\perp_{g_4}}\subset\phi^*T\mathbb S^3
\]
is a smooth real line bundle with
$(\nu_\phi)_x=\operatorname{span}_{\mathbb R}\{N(x)\}$. Here, $C^\infty(\mathbb S^2,\nu_\phi)$ denotes the space of smooth sections of $\nu_\phi$. Thus, $s\in C^\infty(\mathbb S^2,\nu_\phi)$ means that
$s(x)\in(\nu_\phi)_x$ for any $x$.  Equivalently, $s=fN$ for a unique
$f\in C^\infty(\mathbb S^2)$.

For such a section $s$, define its normal connection on the regular set by
\[
 \nabla_X^\perp s
 :=\operatorname{proj}_{\nu_\phi}
   \bigl(\nabla_X^{\phi^*g_4}s\bigr),
 \quad
 X\in C^\infty\bigl(\mathbb S^2\backslash\mathcal B,
   T(\mathbb S^2\backslash\mathcal B)\bigr).
\]
Since $N$ is a global unit generator of $\nu_\phi$,
that is,
$(\nu_\phi)_x=\operatorname{span}_{\mathbb R}\{N(x)\}$ and
$|N(x)|_{g_4}=1$ by Lemma \ref{lem:ramified-tangent-normal-bundles}, the
orthogonal projection onto $\nu_\phi$ is given by
$U\mapsto g_4(U,N)N$.  Metric compatibility,
in the form
\[
 X\,g_4(U,V)
 =g_4(\nabla_X^{\phi^*g_4}U,V)
  +g_4(U,\nabla_X^{\phi^*g_4}V),
\]
therefore, gives
\begin{equation}
 \nabla_X^\perp N
 =g_4\left(\nabla_X^{\phi^*g_4}N,N\right)N
 =\frac{1}{2}X\bigl(|N|_{g_4}^2\bigr)N
 =0.
 \label{eq:normal-unit-parallel}
\end{equation}
The quadratic energy Hessian is
defined for $V\in C^\infty(\mathbb S^2,\phi^*T\mathbb S^3)$ by
\[
 \delta^2\operatorname{Energy}_\phi(V)
 :=\left.\frac{\ud^2}{\ud t^2}\right|_{t=0}
   \operatorname{Energy}_{g_{\mathrm r},g_4}(F_t^V),
\]
whereas the quadratic area Hessian is defined for
$s\in C^\infty(\mathbb S^2,\nu_\phi)$ by
\[
 \delta^2\operatorname{Area}_\phi(s)
 :=\left.\frac{\ud^2}{\ud t^2}\right|_{t=0}
   \operatorname{Area}_{g_4}(F_t^s).
\]
Because $\phi$ is harmonic and is a branched minimal immersion by
Lemma~\ref{lem:finite-branch-set}, it is a
critical point of the energy and area functionals, respectively. Hence
, these second derivatives depend only on the initial fields $V$ and $s$, not
on the chosen variations realizing them.

We now write the full comparison identity
\cite[Theorem~2.1 and (2.27)]{EM08} in the present notation.
That reference computes the energy Hessian in the
conformal class induced by the branched immersion.  Since
$g=\phi^*g_4$ is conformal to $g_{\mathrm r}$ on the regular set,
\eqref{eq:source-conformal-invariance} identifies that Hessian with the one
defined here using $g_{\mathrm r}$.  This identification is needed in the
first equality of \eqref{eq:direct-step-24} below.

Let
\[
 s\in C^\infty(\mathbb S^2,\nu_\phi),
 \quad
 \widetilde\sigma\in C^\infty(\mathbb S^2,\mathcal T_\phi),
 \quad
 v:=s+\widetilde\sigma
 \in C^\infty(\mathbb S^2,\phi^*T\mathbb S^3).
\]
In a branch coordinate $z=x+\mathrm i y$ from Lemma
\ref{lem:local-branch-model}, use the complexified splitting and
projection from Lemma \ref{lem:canonical-tangential-cancellation}, and write
$\widetilde\sigma
=\widetilde\sigma^{1,0}+\widetilde\sigma^{0,1}$.
As in \cite[(2.14)]{EM08}, define
\begin{equation}
 \eta\in C^\infty\left(
 U_p\backslash\mathcal B,\mathcal T_\phi^{\mathbb C}
 \right),
 \quad
 \eta
 :=\left(\nabla_{\partial_z}^{\phi^*g_4}s\right)^\top
   +\left(
      \nabla_{\partial_z}^{\phi^*g_4}
      \widetilde\sigma^{0,1}
    \right)^\top.                                      \label{eq:EM08-eta}
\end{equation}
For each $x\in\mathbb S^2$, extend the squared norm to the map
\[
 |\cdot|_{g_4}^2:
 T_{\phi(x)}\mathbb S^3\otimes_{\mathbb R}\mathbb C
 \longrightarrow\mathbb R_{\geq0},
 \quad |Z|_{g_4}^2:=g_4(Z,\overline Z).
\]
Then
\begin{equation}
\begin{aligned}
 &\delta^2\operatorname{Energy}_\phi(v)\\
 &=4\int_{\mathbb S^2\backslash\mathcal B}
   \Bigl(
    \bigl|\bigl(\nabla_{\partial_z}^{\phi^*g_4}s\bigr)^\perp\bigr|_{g_4}^2
    -\bigl|\bigl(\nabla_{\partial_z}^{\phi^*g_4}s\bigr)^\top\bigr|_{g_4}^2
    -g_4\left(R^{g_4}(s,\phi_z)\phi_{\ol z},s\right)
    +2|\eta|_{g_4}^2
   \Bigr)\,\ud x\,\ud y\\
 &=\delta^2\operatorname{Area}_\phi(s)
   +8\int_{\mathbb S^2\backslash\mathcal B}
      |\eta|_{g_4}^2\,\ud x\,\ud y.
                                                        \label{eq:EM08-full-comparison}
\end{aligned}
\end{equation}
Although written in a branch coordinate, \eqref{eq:EM08-full-comparison} is
the global identity \cite[(2.27)]{EM08}: each density multiplied by
$\ud x\,\ud y$ agrees on overlaps, so the coordinate integrals patch together
over the
surface.  More explicitly, if $w=w(z)$ is a
holomorphic change of branch coordinate and $\mathcal E_z$ denotes the
parenthesized integrand in the first integral of
\eqref{eq:EM08-full-comparison}, then
\[
 \partial_z=w_z\partial_w,
 \quad
 \mathcal E_z=|w_z|^2\mathcal E_w,
 \quad
 \ud x\,\ud y=|w_z|^{-2}\,\ud(\operatorname{Re}w)\,\ud(\operatorname{Im}w),
\]
and therefore
\[
\mathcal E_z\,\ud x\,\ud y
=\mathcal E_w\,\ud(\operatorname{Re}w)\,\ud(\operatorname{Im}w).
\]
Its second equality gives the inequality in
\cite[Theorem~2.1]{EM08}, and it also shows directly that the equality is
valid exactly when $\eta=0$.

Before specializing the comparison identity, we verify that its final
invariant integral is convergent. Indeed, Lemma
\ref{lem:finite-branch-set}, \ref{lem:local-branch-model}, and
\ref{lem:jacobi-potential-formula} give, near each $p\in\mathcal B$,
\[
 \ud A_g=|z|^{2m_p}\lambda\,\ud x\,\ud y,
 \quad q=O(1),
\]
and hence
\begin{equation}
 \int_{0<|z|<\varepsilon}q\,\ud A_g
 \leq C\int_0^\varepsilon r^{2m_p+1}\,\ud r
 =O(\varepsilon^{2m_p+2})\longrightarrow0.
 \label{eq:branch-integral-vanishing}
\end{equation}
Thus, the regular-set integral of $q$ is absolutely convergent and
\[
 \lim_{\varepsilon\to 0^+}
 \int_{\mathbb S^2\backslash
       \bigcup_{p\in\mathcal B}\mathbb D_{\varepsilon,p}}
 q\,\ud A_g
 =
 \int_{\mathbb S^2\backslash\mathcal B}q\,\ud A_g.
\]

Now take $s=N$ and $\widetilde\sigma=\sigma$.  Comparing
\eqref{eq:canonical-tangential-cancellation} with the definition
\eqref{eq:EM08-eta} gives
\begin{equation}
 \eta=0.                                                \label{eq:canonical-eta-zero}
\end{equation}
The identity \eqref{eq:normal-unit-parallel}, extended linearly, also
gives
\begin{equation}
 \left(\nabla_{\partial_z}^{\phi^*g_4}N\right)^\perp
 =\nabla_{\partial_z}^\perp N=0.                        \label{eq:unit-normal-parallel}
\end{equation}
Since $W=N+\sigma$, substituting
\eqref{eq:canonical-eta-zero} and \eqref{eq:unit-normal-parallel} into
\eqref{eq:EM08-full-comparison}, and using the definition of the index form
in \eqref{eq:index-form-definition}, gives the first two lines below.
Specifically,
\[
\IF_\phi^{g_4}(W,W)=\delta^2\operatorname{Energy}_\phi(W)
\]
is the definition in \eqref{eq:index-form-definition}, while
\[
\delta^2\operatorname{Energy}_\phi(W)
=\delta^2\operatorname{Area}_\phi(N)
\]
follows from the second equality in
\eqref{eq:EM08-full-comparison} and \eqref{eq:canonical-eta-zero}.
The
passage to the invariant expression in the third line uses the three local
identities
\[
\begin{aligned}
 4\bigl|\bigl(\nabla_{\partial_z}^{\phi^*g_4}N\bigr)^\perp\bigr|_{g_4}^2
   \,\ud x\,\ud y
 &=|\nabla^\perp N|_g^2\,\ud A_g,\\
 4\bigl|\bigl(\nabla_{\partial_z}^{\phi^*g_4}N\bigr)^\top\bigr|_{g_4}^2
   \,\ud x\,\ud y
 &=|B|_g^2\,\ud A_g,\\
 4g_4\left(R^{g_4}(N,\phi_z)\phi_{\ol z},N\right)
   \,\ud x\,\ud y
 &=\operatorname{Ric}_{g_4}(N,N)\,\ud A_g.
\end{aligned}
\]
Indeed, these are followed by writing
$g=e^{2u}(\ud x^2+\ud y^2)$ and
$\partial_z=\frac12(\partial_x-\mathrm i\partial_y)$. Therefore,
\begin{equation}
\begin{aligned}
 \IF_\phi^{g_4}(W,W)
 &=\delta^2\operatorname{Energy}_\phi(W)
  =\delta^2\operatorname{Area}_\phi(N)\\
 &=4\int_{\mathbb S^2\backslash\mathcal B}
   \Bigl(
    \bigl|\bigl(\nabla_{\partial_z}^{\phi^*g_4}N\bigr)^\perp\bigr|_{g_4}^2
    -\bigl|\bigl(\nabla_{\partial_z}^{\phi^*g_4}N\bigr)^\top\bigr|_{g_4}^2
    -g_4\left(R^{g_4}(N,\phi_z)\phi_{\ol z},N\right)
   \Bigr)\,\ud x\,\ud y\\
 &=\int_{\mathbb S^2\backslash\mathcal B}
   \left(
    |\nabla^\perp N|_g^2-|B|_g^2
    -\operatorname{Ric}_{g_4}(N,N)
   \right)\,\ud A_g\\
 &=-\int_{\mathbb S^2\backslash\mathcal B}
   \bigl(|B|_g^2+\operatorname{Ric}_{g_4}(N,N)\bigr)\,\ud A_g
 =-\int_{\mathbb S^2\backslash\mathcal B}q\,\ud A_g.
                                                               \label{eq:direct-step-24}
\end{aligned}
\end{equation}
The comparison theorem gives the area Hessian of
the closed branched immersion directly.  Its expression in
\eqref{eq:direct-step-24} is a convergent regular-set integral and
\eqref{eq:branch-integral-vanishing} shows that representing this final
integral by removing branch disks leaves no residual mass at the branch
points.
\end{proof}

\begin{proof}[Proof of Proposition \ref{prop:direct-quantitative-instability}]
Let $W$ be the section supplied by Lemma
\ref{lem:canonical-conformal-variation}.  Combining Lemmas
\ref{lem:spherical-jacobi-metric} and
\ref{lem:energy-area-index-identity} yields
\begin{equation}
 \IF_\phi^{g_4}(W,W)
 =-\int_{\mathbb S^2\backslash\mathcal B}q\,\ud A_g
 \leq-8\pi.                                                        \label{eq:direct-step-30}
\end{equation}
The length bound in Lemma \ref{lem:canonical-conformal-variation} completes
the proof.
\end{proof}

\section{Contradiction to cone stability}\label{sec:cone-contradiction}

We now combine the shifted stability inequality with the quantitative
instability estimate to complete the proof.

Apply Proposition \ref{prop:direct-quantitative-instability} to the
non-constant harmonic map $\widetilde\phi$ in
\eqref{eq:lifted-sphere-harmonic}.  It supplies
\[
 W\in C^\infty(\mathbb S^2,\widetilde\phi^*T\mathbb S^3)
\]
with
\[
 \IF_{\widetilde\phi}^{g_4}(W,W)\leq-8\pi
\]
and
\[
 \int_{\mathbb{S}^2}|W|_{g_4}^2\,\ud A_{g_{\mathrm r}}
 \leq\f{13}{4}\Area_{g_{\mathrm r}}(\mathbb{S}^2)
 =13\pi.
\]
Therefore,
\begin{equation}
 \IF_{\widetilde\phi}^{g_4}(W,W)
 +\f14\int_{\mathbb{S}^2}|W|_{g_4}^2\,\ud A_{g_{\mathrm r}}
 \leq-8\pi+\f{13\pi}{4}
 =-\f{19\pi}{4}<0.
 \label{eq:final-negative}
\end{equation}
This contradicts \eqref{eq:quarter-stability}. Hence,
\[
 \Spts=\varnothing,
\]
and the definition of $\Spts$, together with the interior smoothness at any
regular point stated in
\cite[Theorem~I and the paragraph immediately preceding Theorem~II]{SU82},
proves Theorem~\ref{thm:main}.

\section*{Acknowledgments}

%
%

This work was supported by the National Key R\&D Program of China under Grant
2023YFA1008801.

\bibliographystyle{plain}

\end{document}